\newcommand{\sfrac}[2]{{\textstyle\frac{#1}{#2}}}
\numberwithin{equation}{section}
\newcommand{\Ex}{\mathbb{E}}
\renewcommand\Ex{\operatorname{\mathbb E}{}} 
 \renewcommand{\Pr}{{\mathbb{P}}}
 \newcommand{\FF}{\mathcal F}
 \newcommand{\eps}{\varepsilon}
 \newcommand{\bX}{\mathbf X}
 \newcommand{\bt}{\mathbf t}
 \newcommand\CTCS{\operatorname{CTCS}}
\newcommand\DTCS{\mathrm{DTCS}}
\newtheorem{Lemma}{Lemma}[section]
\newtheorem{Theorem}[Lemma]{Theorem}
\newtheorem{Corollary}[Lemma]{Corollary}
\theoremstyle{definition}
\newtheorem{Remark}[Lemma]{Remark}
\newtheorem{remark}[Lemma]{Remark}
\newenvironment{romenumerate}[1][-10pt]{
\addtolength{\leftmargini}{#1}\begin{enumerate}
 }{\end{enumerate}}
\newenvironment{alphenumerate}[1][-10pt]{
\addtolength{\leftmargini}{#1}\begin{enumerate}
 }{\end{enumerate}}
 \newcommand{\var}{\mathrm{var}}
\newcommand\nn{^{(n)}}
\newcommand\nni{^{(n+1)}}
\newcommand\nnn{^{[n]}}
\newcommand\bbN{\mathbb N}
\newcommand\bbR{\mathbb R}
\newcommand\bbC{\mathbb C}
\newcommand\setn{\set{1,\dots,n}}
\newcommand\set[1]{\ensuremath{\{#1\}}}
\newcommand\bigset[1]{\ensuremath{\bigl\{#1\bigr\}}}
\newcommand\bigpar[1]{\bigl(#1\bigr)}
\newcommand\Bigpar[1]{\Bigl(#1\Bigr)}
\newcommand\bigsqpar[1]{\bigl[#1\bigr]}
\newcommand\Bigsqpar[1]{\Bigl[#1\Bigr]}
\newcommand\bigabs[1]{\bigl\lvert#1\bigr\rvert}
\newcommand\KX{K}
\newcommand\XP{P}
\newcommand\ntoo{\ensuremath{{n\to\infty}}}
\newcommand\oi{\ensuremath{[0,1]}}
\newcommand\oio{\ensuremath{(0,1)}}
\newcommand\cXt{\XP_{t,1}}
\newcommand\gb{\beta}
\newcommand\gd{\delta}
\newcommand\gG{\Gamma}
\newcommand\gu{\upsilon}
\newcommand\gU{\Upsilon}
\newcommand\cL{{\mathcal L}}
\newcommand\cP{\mathcal P}
\newcommand\cR{{\mathcal R}}
\newcommand\cY{{\mathcal Y}}
\newcommand{\sumko}{\sum_{k=0}^\infty}
\newcommand{\suml}{\sum_{\ell=1}^\infty}
\newcommand\intoo{\int_0^\infty}
\newcommand\dd{\,\mathrm{d}}
\newcommand\ddx{\mathrm{d}}
\newcommand\eqd{\overset{\mathrm{d}}{=}}
\newcommand\intoi{\int_0^1}
\newcommand\qw{^{-1}}
\newcommand\Res{\operatorname{Res}}
\newcommand\nux{\nu_\Delta}
\newcommand\tnux{\widetilde{\nux}}
\newcommand\qww{^{-2}}
\newcommand\ii{\mathrm{i}}
\newcommand\xq{\widehat q}
\newcommand\Expo{\operatorname{Exp}}
\newcommand\one{\boldsymbol1}
\newcommand\onen{\one_{[n]}}
\newcommand\oneoo{\one_{[\infty]}}
\newcommand{\refT}[1]{Theorem~\ref{#1}}
\newcommand{\refC}[1]{Corollary~\ref{#1}}
\newcommand{\refL}[1]{Lemma~\ref{#1}}
\newcommand{\refR}[1]{Remark~\ref{#1}}
\newcommand{\refS}[1]{Section~\ref{#1}}
\newcommand{\refSS}[1]{Section~\ref{#1}}
\newcommand{\refF}[1]{Figure~\ref{#1}}
\newcommand{\refApp}[1]{Appendix~\ref{#1}}
\newcommand\marginal[1]{\marginpar[\raggedleft\tiny #1]{\raggedright\tiny#1}}
\newcommand\REM[1]{{\raggedright\texttt{[#1]}\par\marginal{XXX}}}
\newcommand\E{\Ex}
\newcommand\LL{\cL}
\newcommand\RR{\cR}
\newcommand\stopp{{\sf stop}}
\newcommand\bc{\mathbf c}
\newcommand\bs{\mathbf s}
\newcommand\bmu{\boldsymbol\mu}
\newcommand\bnu{\boldsymbol\nu}
\newcommand\xbnu{\widetilde{\boldsymbol\nu}}
\newcommand\brho{\boldsymbol\rho}
\newcommand\beps{\boldsymbol\epsilon^{(1)}}
\newcommand\cPm{\cP_{\mathfrak m}}
\newcommand\qup{q^\uparrow}
\xdef\klockan{\the\count1.0\the\count255}
\xdef\klockan{\the\count1.\the\count255}\fi
\begin{document}

\title[The Critical Beta-splitting Random Tree III]
{The Critical Beta-splitting Random Tree III: The exchangeable partition representation and the fringe tree} 

 \author{David J. Aldous}
\address{Department of Statistics,
 367 Evans Hall \#\  3860,
 U.C. Berkeley CA 94720}
\email{aldousdj@berkeley.edu}
\urladdr{www.stat.berkeley.edu/users/aldous.}

\author{Svante Janson}
\thanks{SJ supported by the Knut and Alice Wallenberg Foundation
and
the Swedish Research Council
}
\address{Department of Mathematics, Uppsala University, PO Box 480,
SE-751~06 Uppsala, Sweden}
\email{svante.janson@math.uu.se}
\newcommand\urladdrx[1]{{\urladdr{\def~{{\tiny$\sim$}}#1}}}
\urladdrx{www2.math.uu.se/~svante}

\date{December 11, 2024} 

 \begin{abstract}
 In the critical beta-splitting model of a random $n$-leaf rooted tree, clades are recursively split into sub-clades, and a clade of $m$ leaves is split into sub-clades 
 containing  $i$ and $m-i$ leaves  with probabilities $\propto 1/(i(m-i))$. 
 Study of structure theory and explicit quantitative aspects of the model is an active research topic. 
 It turns out that many results have several different proofs, and  detailed studies of analytic
 proofs are given in \cite{beta1} (via analysis of recursions) and \cite{beta4} (via Mellin transforms).
 This article describes two core probabilistic methods for studying $n \to \infty$ asymptotics of the basic finite-$n$-leaf models.

 (i) There is a canonical embedding into a continuous-time model, that is a random tree  $\CTCS(n)$ on $n$ leaves with real-valued edge lengths, and this model
 turns out to be more convenient to study. 
 The family $(\CTCS(n), n \ge 2)$ is consistent under a ``delete random leaf and prune" operation.
  That leads to an explicit inductive construction (the {\em growth algorithm}) of $(\CTCS(n), n \ge 2)$  as $n$ increases, 
  and then to a limit structure $\CTCS(\infty)$ which can be formalized via exchangeable partitions, 
  in some ways analogous to the Brownian continuum random tree.
  
  (ii) There is an explicit description of the limit {\em fringe distribution} relative to a random leaf,
   whose graphical representation  is essentially the format of the cladogram 
   representation of biological phylogenies.
 \end{abstract}

\maketitle
  

{\tt {\bf Keywords:}
Exchangeable partition, fringe tree, Markov chain, phylogenetic tree, random tree, subordinator.

MSC 60C05; 05C05, 60G09, 92B10}

\tableofcontents


\section{Introduction}

This article is part of a broad project \cite{beta2-arxiv,beta4,HDchain,beta1}
studying a certain random tree model.  The model is defined by recursively splitting a given set of leaves such that 
a set of $m$ leaves is split into subsets containing  $i$ and $m-i$ leaves  with probabilities proportional to 
$1/i(m-i)$; see \refS{sec:tree} for details.
The model arose \cite{me_clad} as a toy model for phylogenetic trees, designed to mimic the
uneven splits observed in real world examples  
(see  \refS{sec:clad2}). 
The model turns out to have a rich mathematical structure.
There are many questions one can ask and many different proof techniques can be exploited. 
Indeed several of the key results each have several quite different proofs, a fact which may be of pedagogical interest.

This article provides an introduction to the model, emphasizing connections with previous work,
and describes core probabilistic methods for studying $n \to \infty$ asymptotics of the basic finite-$n$-leaf models.
A detailed technical study of some aspects via the analysis of recursions is given in \cite{beta1},
and another detailed technical study of other aspects via Mellin transforms will be given in \cite{beta4}.
In parallel, a document \cite{beta2-arxiv} will be maintained, to summarize known results and provide more heuristics and open problems.

We do find it convenient to adopt the biological term \emph{clade} for the
set of leaves in a subtree, 
that is the elements in a subset somewhere in the splitting
process. There is an obvious correspondence between the clades and the
total $2n-1$ nodes of the binary tree, where leaves correspond to the $n$ clades of size 1
and internal nodes to the $n-1$ larger clades.

\subsection{Outline of results}
The most studied aspects of the model have been centered on the  CLT for leaf heights, proved by different methods in \cite{beta1,iksanovCLT,kolesnik}, with 
 further related ``height" results in \cite{beta4,beta1}. Basic such results are mentioned in Section \ref{sec:LH}, but this article is essentially independent of those results. 
 
 \noindent
 \begin{itemize}
 \item In Section \ref{sec:consistency} we describe the consistency property (Theorem \ref{T:consistent}) for $n$-leaf trees and the resulting representation of a 
 limit tree  $\CTCS(\infty)$ via an exchangeable random partition of $\bbN$.
 
 \item For finite $n$ the ``number of subclades along a path to a uniform random leaf" is a certain continuous-time Markov chain that we call the
 {\em harmonic descent} chain (Section \ref{sec:HD}).  
 The probability of visiting a given state (subclade size) $i$ has an explicit formula $a(i)$ in the $n \to \infty$ limit.
 This {\em occupation measure} Theorem \ref{T:alimit} (Section \ref{sec:OP}) has been proved originally in \cite{HDchain} and then \cite{iksanovHD}.
 
 \item This leads in Section \ref{sec:exch} to an exact description of the ``number of subclades along a path to a uniform random leaf on the infinite boundary" process within $\CTCS(\infty)$, 
 in terms of a certain subordinator (Theorem \ref{T:exact}).
 \item In Section \ref{sec:OPfringe} we observe that  the ``occupation measure" Theorem \ref{T:alimit} leads to an explicit description (Theorem \ref{Tqup}) 
 of the asymptotic {\em fringe tree},  many of whose properties have yet to be investigated.
 The fringe tree is essentially the way that real-world phylogenies are drawn as {\em cladograms}, and we illustrate a real example alongside a realization
of our model. 
\item In Section \ref{sec:surprise} we give a novel third proof of the  occupation measure theorem, as a first indication of the power of  Mellin transform methods.
\item In Section \ref{sec:CRT} we discuss analogies with the Brownian continuum random tree.
 \end{itemize}

 \section{The critical beta-splitting model of random trees}
  \label{sec:tree}

In this section we give the basic construction of the critical beta-splitting random
tree.  In fact we will give several different versions:
we define a discrete-time version $\DTCS(n)$ 
and a continuous-time version $\CTCS(n)$;
furthermore, in both cases we define ordered and unordered versions.
Moreover, for the unordered version of $\CTCS(n)$,
we define an explicit growth process 
$(\CTCS(n), n = 1,2,3,\ldots)$ that constructs $\CTCS(n)$ for all $n$ 
jointly in a natural way by adding leaves one by one.

\subsection{The ordered versions}\label{SSordered}
For $m \ge 2$, consider the probability distribution 
$(q(m,i),\ 1 \le i \le m-1)$
constructed to be proportional to $\frac{1}{i(m-i)}$.
 Explicitly 
\begin{equation}\label{01}
q(m,i)=\frac{m}{2h_{m-1}}\cdot\frac{1}{i(m-i)}
=\frac{1}{2h_{m-1}}\Bigpar{\frac{1}{i}+\frac{1}{m-i}},
\qquad 1\le i\le m-1,
\end{equation}
where  $h_{m-1}$ is the harmonic sum 
\begin{align}
h_{m-1}:=\sum_{i=1}^{m-1}\frac1i.   
\end{align}
Now fix $n \ge 2$.
Consider the process of constructing a random binary tree with $n$ leaves,
labelled $1,\dots,n$, by recursively
splitting the integer interval 
$[n] := \{1,2,\ldots,n\}$ of leaves as follows.
First specify that there is a left edge and a right edge at the root,
leading to a left subtree which will have the $\LL_n$ leaves
$\{1,\ldots,\LL_n\}$ 
and a right subtree which will have the $\RR_n = n - \LL_n$ 
leaves $\{\LL_n + 1,\ldots, n\}$, where $\LL_n$ 
 (and also $\RR_n$, by symmetry) has distribution $q(n,\cdot)$. 
 Recursively, a subinterval with $m\ge 2$ leaves is split into two
 subintervals of random size 
  from the distribution $q(m,\cdot)$. 
  Continue until reaching intervals of size $1$, which are the leaves.
This yields a binary tree with the given $n$ leaves; each of the $n-1$
splits corresponds to an internal node (including the root).
For completeness, we also include the case $n=1$, in which there are no splits
and the tree just consists of the root.
Figure \ref{Fig:1d} (left) illustrates schematically the construction as
interval-splitting.

For our purposes, it will be convenient to draw the binary trees in a
non-standard way, shown in \refF{Fig:1d} (center and right).
Instead of drawing two edges from an internal node to its children as usual, 
we draw one vertical line to a ``branchpoint'' 
(representing the split but \emph{not} regarded as a node in
the tree) followed by two horizontal lines to the children. 
We regard the horizontal lines as having length 0; 
we may (for obvious practical reasons) draw them with arbitrary
lengths in the figures, but these lengths have no significance.
In \refF{Fig:1d} (right), the horizontal lines to leaves are drawn with
their true length 0.

We regard the splitting process as evolving in time, 
and consider two versions, one in discrete time and one in continuous time,
which we call $\DTCS(n)$ and $\CTCS(n)$, respectively.%
\footnote{DTCS and CTCS are
  abbreviations for Discrete Time  Critical Splitting and Continuous Time
  Critical Splitting, for reasons explained in Section \ref{sec:motive}.}
In $\DTCS(n)$, the root clade splits at time 1, its children at time 2, and
so on. 
In $\CTCS(n)$, a clade with  $m \ge 2$ leaves is split at rate $h_{m-1}$,
that is after an $\Expo(h_{m-1})$ random time (independent of everything
else).%
\footnote{$\Expo(r)$ denotes a random variable with an exponential
  distribution with rate $r$, and thus expectation $1/r$.}
In both versions, we start at time 0.
Each node is regarded as born at the time the corresponding clade appears;
we also regard this time as the height of the node in the tree.
Hence, in the $\DTCS(n)$, all edges have length $1$ and the height equals
the usual graph distance to the root; in $\CTCS(n)$ the edges have
different, random, lengths. 

In other words, in our graphical representation of the binary tree, each clade with size $>1$
is represented by one vertical line (and conversely); 
this line thus has length 1 in $\DTCS(n)$
and has length $\Expo(h_{m-1})$ for a clade of size $m$ in $\CTCS(n)$.

Recall that apart from edge lengths, $\DTCS(n)$
and $\CTCS(n)$ define the same binary tree; in particular, we can always
recover $\DTCS(n)$ from $\CTCS(n)$ by ignoring the edge lengths.
Figure \ref{Fig:1c} shows a schematic realization of $\CTCS(20)$ as a
``continuization" of the realization of $\DTCS(20)$ in Figure \ref{Fig:1d}.
Figure \ref{Fig:400} shows an actual realization of $\CTCS(400)$.
Figure \ref{Fig:1c} shows also, as an example,
a distinguished leaf (11); 
the path from the root to the distinguished leaf 11 
passes through successive clades
$$
[[1,20]], \ [[4,20]], \ [[5,20]], \ [[9,20]], \ [[9,19]], \ [[9,17]],
[[9,13]], \ [[9,11]], \ [[11]]
$$
which have successive sizes (number of leaves) 
$20,17,16,12,11,9,5,3,1$.

The choice of rate $h_{m-1}$ in the definition of $\CTCS(n)$ may seem
arbitrary, but it is justified by the consistency result below
(Theorem \ref{T:consistent}, see also \eqref{f1} in its proof),  which 
suggests that $h_{m-1}$ is the canonical choice of splitting rates for the
continuization. 
Note that we, equivalently, can say that a clade of size $m\ge2$  
splits into two clades of sizes $i$ and $m-i$ 
(taking, as always below, the left subclade first for definiteness)
with rate
\begin{align}\label{ok1}
\xq(m,i):=  h_{m-1}q(m,i) =
\frac{m}{2i(m-i)}= \frac{1}{2i}+\frac{1}{2(m-i)},
\end{align}
for every $1\le i\le m-1$.

Regarding terminology, remember that ``time" and ``height"%
\footnote{Or depth, if one draws trees upside-down.}
are the same.
Within the mathematical analysis of random processes we generally follow the
usual ``time" convention, 
while in stating results we generally use the tree-related terminology of
``height". 

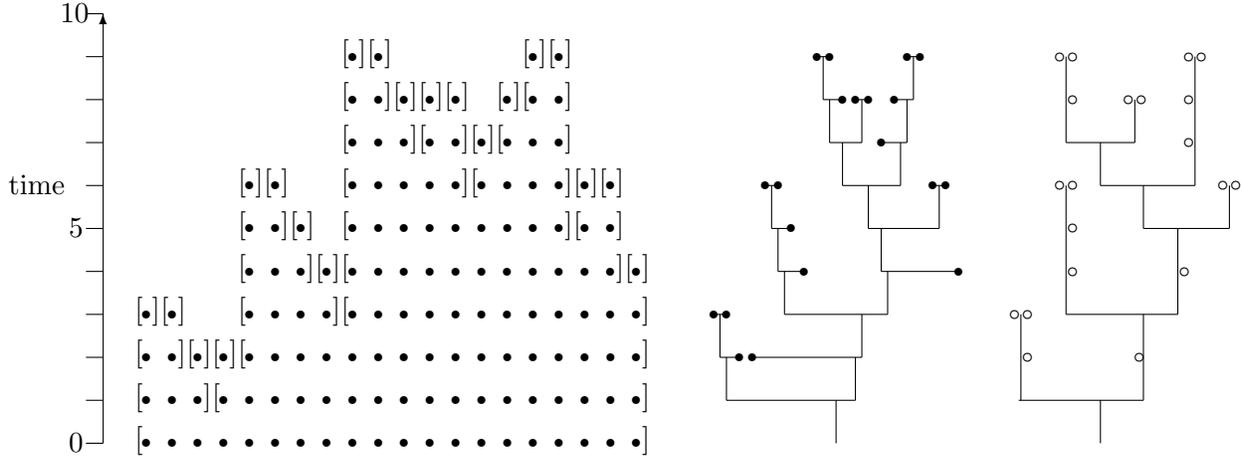
\begin{figure}[t]
\setlength{\unitlength}{0.045in}
\begin{picture}(160,50)
\put(-5,0){\vector(0,1){50}}
\put(-9,-1){0}
\put(-9,24){5}
\put(-10,49){10}
\put(-16,29){time}
\multiput(-5,0)(0,5){11}{\line(-1,0){2}}
\multiput(0,0)(3,0){20}{\circle*{0.9}}
\put(-1.3,-0.5){[}
\put(57.7,-0.5){]}
\multiput(0,5)(3,0){20}{\circle*{0.9}}
\put(-1.3,4.5){[}
\put(6.7,4.5){]}
\put(7.7,4.5){[}
\put(57.7,4.5){]}
\multiput(0,10)(3,0){20}{\circle*{0.9}}
\put(-1.3,9.5){[}
\put(3.7,9.5){]}
\put(4.7,9.5){[}
\put(6.7,9.5){]}
\put(7.7,9.5){[}
\put(9.7,9.5){]}
\put(10.7,9.5){[}
\put(57.7,9.5){]}
\multiput(0,15)(3,0){2}{\circle*{0.9}}
\put(-1.3,14.5){[}
\put(0.7,14.5){]}
\put(1.7,14.5){[}
\put(3.7,14.5){]}
\multiput(12,15)(3,0){16}{\circle*{0.9}}
\put(10.7,14.5){[}
\put(21.7,14.5){]}
\put(22.7,14.5){[}
\put(57.7,14.5){]}
\multiput(12,20)(3,0){16}{\circle*{0.9}}
\put(10.7,19.5){[}
\put(18.7,19.5){]}
\put(19.7,19.5){[}
\put(21.7,19.5){]}
\put(22.7,19.5){[}
\put(54.7,19.5){]}
\put(55.7,19.5){[}
\put(57.7,19.5){]}
\multiput(12,25)(3,0){3}{\circle*{0.9}}
\put(10.7,24.5){[}
\put(15.7,24.5){]}
\put(16.7,24.5){[}
\put(18.7,24.5){]}
\multiput(24,25)(3,0){11}{\circle*{0.9}}
\put(22.7,24.5){[}
\put(48.7,24.5){]}
\put(49.7,24.5){[}
\put(54.7,24.5){]}
\multiput(12,30)(3,0){2}{\circle*{0.9}}
\put(10.7,29.5){[}
\put(12.7,29.5){]}
\put(13.7,29.5){[}
\put(15.7,29.5){]}
\multiput(24,30)(3,0){11}{\circle*{0.9}}
\put(22.7,29.5){[}
\put(36.7,29.5){]}
\put(37.7,29.5){[}
\put(48.7,29.5){]}
\put(49.7,29.5){[}
\put(51.7,29.5){]}
\put(52.7,29.5){[}
\put(54.7,29.5){]}
\multiput(24,35)(3,0){9}{\circle*{0.9}}
\put(22.7,34.5){[}
\put(30.7,34.5){]}
\put(31.7,34.5){[}
\put(36.7,34.5){]}
\put(37.7,34.5){[}
\put(39.7,34.5){]}
\put(40.7,34.5){[}
\put(48.7,34.5){]}
\multiput(24,40)(3,0){5}{\circle*{0.9}}
\put(22.7,39.5){[}
\put(27.7,39.5){]}
\put(28.7,39.5){[}
\put(30.7,39.5){]}
\put(31.7,39.5){[}
\put(33.7,39.5){]}
\put(34.7,39.5){[}
\put(36.7,39.5){]}
\multiput(42,40)(3,0){3}{\circle*{0.9}}
\put(40.7,39.5){[}
\put(42.7,39.5){]}
\put(43.7,39.5){[}
\put(48.7,39.5){]}
\multiput(24,45)(3,0){2}{\circle*{0.9}}
\put(22.7,44.5){[}
\put(24.7,44.5){]}
\put(25.7,44.5){[}
\put(27.7,44.5){]}
\multiput(45,45)(3,0){2}{\circle*{0.9}}
\put(43.7,44.5){[}
\put(45.7,44.5){]}
\put(46.7,44.5){[}
\put(48.7,44.5){]}
\put(66,15){\line(1,0){1.5}}
\put(66.75,15){\line(0,-1){5}}
\put(66.75,10){\line(1,0){2.25}}
\put(67.5,10){\line(0,-1){5}}
\put(82.5,10){\line(0,-1){5}}
\put(67.5,5){\line(1,0){15}}
\put(83.25,15){\line(0,-1){5}}
\put(70.5,10){\line(1,0){12.75}}
\put(74.25,20){\line(0,-1){5}}
\put(86.25,20){\line(0,-1){5}}
\put(74.25,15){\line(1,0){12}}
\put(73.5,25){\line(0,-1){5}}
\put(73.5,20){\line(1,0){3}}
\put(85.5,25){\line(0,-1){5}}
\put(85.5,20){\line(1,0){9}}
\put(72.75,30){\line(0,-1){5}}
\put(72.75,25){\line(1,0){2.5}}
\put(72,30){\line(1,0){1.5}}
\put(84,30){\line(0,-1){5}}
\put(91.5,30){\line(1,0){1.5}}
\put(92.25,30){\line(0,-1){5}}
\put(84,25){\line(1,0){8.25}}
\put(81,35){\line(0,-1){5}}
\put(87.75,35){\line(0,-1){5}}
\put(81,30){\line(1,0){6.75}}
\put(79.5,40){\line(0,-1){5}}
\put(83.25,40){\line(0,-1){5}}
\put(79.5,35){\line(1,0){3.75}}
\put(88.5,40){\line(0,-1){5}}
\put(85.5,35){\line(1,0){3}}
\put(78.75,45){\line(0,-1){5}}
\put(78.75,40){\line(1,0){2.25}}
\put(82.5,40){\line(1,0){1.5}}
\put(89.25,45){\line(0,-1){5}}
\put(87,40){\line(1,0){2.25}}
\put(78,45){\line(1,0){1.5}}
\put(88.5,45){\line(1,0){1.5}}
\put(66,15){\circle*{0.9}}
\put(67.5,15){\circle*{0.9}}
\put(69,10){\circle*{0.9}}
\put(70.5,10){\circle*{0.9}}
\put(72,30){\circle*{0.9}}
\put(73.5,30){\circle*{0.9}}
\put(75,25){\circle*{0.9}}
\put(76.5,20){\circle*{0.9}}
\put(78,45){\circle*{0.9}}
\put(79.5,45){\circle*{0.9}}
\put(81,40){\circle*{0.9}}
\put(82.5,40){\circle*{0.9}}
\put(84,40){\circle*{0.9}}
\put(85.5,35){\circle*{0.9}}
\put(87,40){\circle*{0.9}}
\put(88.5,45){\circle*{0.9}}
\put(90,45){\circle*{0.9}}
\put(91.5,30){\circle*{0.9}}
\put(93,30){\circle*{0.9}}
\put(94.5,20){\circle*{0.9}}
\put(80.25,5){\line(0,-1){5}}
\put(101,15){\circle{0.9}}
\put(102.5,15){\circle{0.9}}
\put(101.75,15){\line(0,-1){10}}
\put(102.5,10){\circle{0.9}}
\put(101.5,5){\line(1,0){14.5}}
\put(111,5){\line(0,-1){5}}
\put(116,5){\line(0,1){10}}
\put(115.5,10){\circle{0.9}}
\put(107,15){\line(1,0){13}}
\put(107,15){\line(0,1){15}}
\put(107.75,20){\circle{0.9}}
\put(107.75,25){\circle{0.9}}
\put(107.75,30){\circle{0.9}}
\put(106.25,30){\circle{0.9}}
\put(120,15){\line(0,1){10}}
\put(120.75,20){\circle{0.9}}
\put(116,25){\line(1,0){10}}
\put(126,25){\line(0,1){5}}
\put(126.75,30){\circle{0.9}}
\put(125.25,30){\circle{0.9}}
\put(116,25){\line(0,1){5}}
\put(111,30){\line(1,0){11}}
\put(122,30){\line(0,1){15}}
\put(121.25,35){\circle{0.9}}
\put(121.25,40){\circle{0.9}}
\put(121.25,45){\circle{0.9}}
\put(122.75,45){\circle{0.9}}
\put(111,30){\line(0,1){5}}
\put(107,35){\line(1,0){8}}
\put(107,35){\line(0,1){10}}
\put(107.75,40){\circle{0.9}}
\put(107.75,45){\circle{0.9}}
\put(106.25,45){\circle{0.9}}
\put(115,35){\line(0,1){5}}
\put(115.75,40){\circle{0.9}}
\put(114.25,40){\circle{0.9}}

\end{picture}
\caption{Equivalent representations of a realization of DTCS(20).}
\label{Fig:1d}
\end{figure}

\begin{figure}
\setlength{\unitlength}{0.043in}
\begin{picture}(160,60)
\put(-5,0){\vector(0,1){55}}
\put(-16,33){time}
\multiput(-5,0)(0,12){5}{\line(-1,0){2}}
\put(-9,-1){0}
\put(-9,11){1}
\put(-9,23){2}
\put(-9,35){3}
\put(-10,47){4}
\multiput(0,0)(3,0){20}{\circle*{0.9}}
\put(30,0){\circle{1.8}}
\put(30,40){\circle{1.8}}
\put(-1.3,-0.5){[}
\put(57.7,-0.5){]}
\multiput(0,3)(3,0){20}{\circle*{0.9}}
\put(-1.3,2.5){[}
\put(6.7,2.5){]}
\put(7.7,2.5){[}
\put(57.7,2.5){]}

\multiput(0,14)(3,0){3}{\circle*{0.9}}
\multiput(0,27)(3,0){2}{\circle*{0.9}}

\put(-1.3,13.5){[}
\put(3.7,13.5){]}
\put(4.7,13.5){[}
\put(6.7,13.5){]}

\put(-1.3,26.5){[}
\put(1.7,26.5){[}
\put(0.7,26.5){]}
\put(3.7,26.5){]}

\multiput(9,6)(3,0){17}{\circle*{0.9}}

\put(7.7,5.5){[}
\put(9.7,5.5){]}
\put(10.7,5.5){[}
\put(57.7,5.5){]}

\multiput(12,11)(3,0){16}{\circle*{0.9}}

\put(10.7,10.5){[}
\put(21.7,10.5){]}
\put(22.7,10.5){[}
\put(57.7,10.5){]}

\multiput(12,20)(3,0){4}{\circle*{0.9}}
\multiput(24,16)(3,0){12}{\circle*{0.9}}

\put(10.7,19.5){[}
\put(18.7,19.5){]}
\put(19.7,19.5){[}
\put(21.7,19.5){]}
\put(22.7,15.5){[}
\put(54.7,15.5){]}
\put(55.7,15.5){[}
\put(57.7,15.5){]}

\multiput(12,25)(3,0){3}{\circle*{0.9}}
\put(10.7,24.5){[}
\put(15.7,24.5){]}
\put(16.7,24.5){[}
\put(18.7,24.5){]}

\multiput(24,21)(3,0){11}{\circle*{0.9}}
\put(22.7,20.5){[}
\put(48.7,20.5){]}
\put(49.7,20.5){[}
\put(54.7,20.5){]}

\multiput(12,40)(3,0){2}{\circle*{0.9}}
\put(10.7,39.5){[}
\put(12.7,39.5){]}
\put(13.7,39.5){[}
\put(15.7,39.5){]}

\multiput(24,26)(3,0){9}{\circle*{0.9}}
\put(22.7,25.5){[}
\put(36.7,25.5){]}
\put(37.7,25.5){[}
\put(48.7,25.5){]}

\multiput(51,33)(3,0){2}{\circle*{0.9}}
\put(49.7,32.5){[}
\put(51.7,32.5){]}
\put(52.7,32.5){[}
\put(54.7,32,5){]}

\multiput(24,33)(3,0){5}{\circle*{0.9}}
\put(22.7,32.5){[}
\put(30.7,32.5){]}
\put(31.7,32.5){[}
\put(36.7,32.5){]}

\multiput(39,35)(3,0){4}{\circle*{0.9}}
\put(37.7,34.5){[}
\put(39.7,34.5){]}
\put(40.7,34.5){[}
\put(48.7,34.5){]}

\multiput(24,40)(3,0){3}{\circle*{0.9}}
\put(22.7,39.5){[}
\put(27.7,39.5){]}
\put(28.7,39.5){[}
\put(30.7,39.5){]}

\multiput(33,44)(3,0){2}{\circle*{0.9}}
\put(31.7,43.5){[}
\put(33.7,43.5){]}
\put(34.7,43.5){[}
\put(36.7,43.5){]}

\multiput(42,40)(3,0){3}{\circle*{0.9}}
\put(40.7,39.5){[}
\put(42.7,39.5){]}
\put(43.7,39.5){[}
\put(48.7,39.5){]}

\multiput(24,55)(3,0){2}{\circle*{0.9}}
\put(22.7,54.5){[}
\put(24.7,54.5){]}
\put(25.7,54.5){[}
\put(27.7,54.5){]}

\multiput(45,49)(3,0){2}{\circle*{0.9}}
\put(43.7,48.5){[}
\put(45.7,48.5){]}
\put(46.7,48.5){[}
\put(48.7,48.5){]}


\put(68,3){\line(1,0){27}}
\put(93.5,3){\line(0,-1){3}}

\put(66.5,14){\line(1,0){4.5}}
\put(68,3){\line(0,1){11}}
\put(71,14){\circle*{0.9}}

\put(65,27){\line(1,0){3}}
\put(66.5,14){\line(0,1){13}}
\put(65,27){\circle*{0.9}}
\put(68,27){\circle*{0.9}}

\put(95,3){\line(0,1){3}}
\put(74,6){\line(1,0){25.5}}
\put(74,6){\circle*{0.9}}

\put(99.5,6){\line(0,1){5}}
\put(81.5,11){\line(1,0){24}}
\put(81.5,11){\line(0,1){9}}

\put(81.5,20){\line(1,0){4.5}}
\put(86,20){\circle*{0.9}}
\put(81.5,20){\line(-1,0){1.5}}

\put(80,20){\line(0,1){5}}
\put(80,25){\line(-1,0){1.5}}
\put(80,25){\line(1,0){3}}
\put(83,25){\circle*{0.9}}

\put(78.5,25){\line(0,1){15}}
\put(77,40){\line(1,0){3}}
\put(77,40){\circle*{0.9}}
\put(80,40){\circle*{0.9}}

\put(105.5,11){\line(0,1){5}}
\put(105.5,16){\line(1,0){16.5}}
\put(122,16){\circle*{0.9}}
\put(105.5,16){\line(-1,0){1.5}}

\put(104,16){\line(0,1){5}}
\put(104,21){\line(1,0){13.5}}
\put(104,21){\line(-1,0){3}}

\put(101,21){\line(0,1){5}}
\put(101,26){\line(1,0){7.5}}
\put(101,26){\line(-1,0){6}}

\put(95,26){\line(0,1){7}}
\put(95,33){\line(1,0){4.5}}
\put(95,33){\line(-1,0){3}}

\put(92,33){\line(0,1){7}}
\put(92,40){\line(1,0){3}}
\put(92,40){\line(-1,0){1.5}}
\put(95,40){\circle*{0.9}}
\put(95,40){\circle{1.8}}

\put(90.5,40){\line(0,1){15}}
\put(89,55){\line(1,0){3}}
\multiput(89,55)(3,0){2}{\circle*{0.9}}

\put(99.5,33){\line(0,1){11}}
\put(98,44){\line(1,0){3}}
\multiput(98,44)(3,0){2}{\circle*{0.9}}

\put(108.5,26){\line(0,1){9}}
\put(107,35){\line(1,0){3}}
\put(110,35){\line(0,1){5}}

\put(107,40){\line(1,0){4.5}}
\put(107,40){\circle*{0.9}}
\put(111.5,40){\line(0,1){9}}
\multiput(110,49)(3,0){2}{\circle*{0.9}}
\put(110,49){\line(1,0){3}}

\put(117.5,21){\line(0,1){12}}
\multiput(116,33)(3,0){2}{\circle*{0.9}}
\put(116,33){\line(1,0){3}}

\put(104,35){\circle*{0.9}}
\put(107,35){\line(-1,0){3}}

\put(91,-3){root}
 \end{picture}
\caption{Equivalent representations of a realization of $\CTCS(20)$.
One distinguished leaf is marked.}
\label{Fig:1c}
\end{figure}
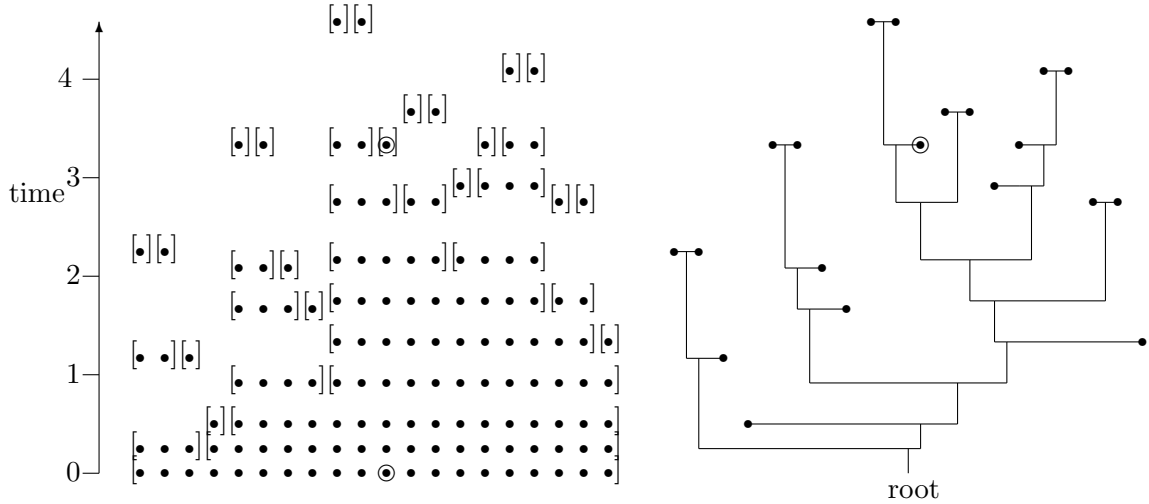

\begin{figure}
\hspace*{-1.9in}
\includegraphics[width=8.5in]{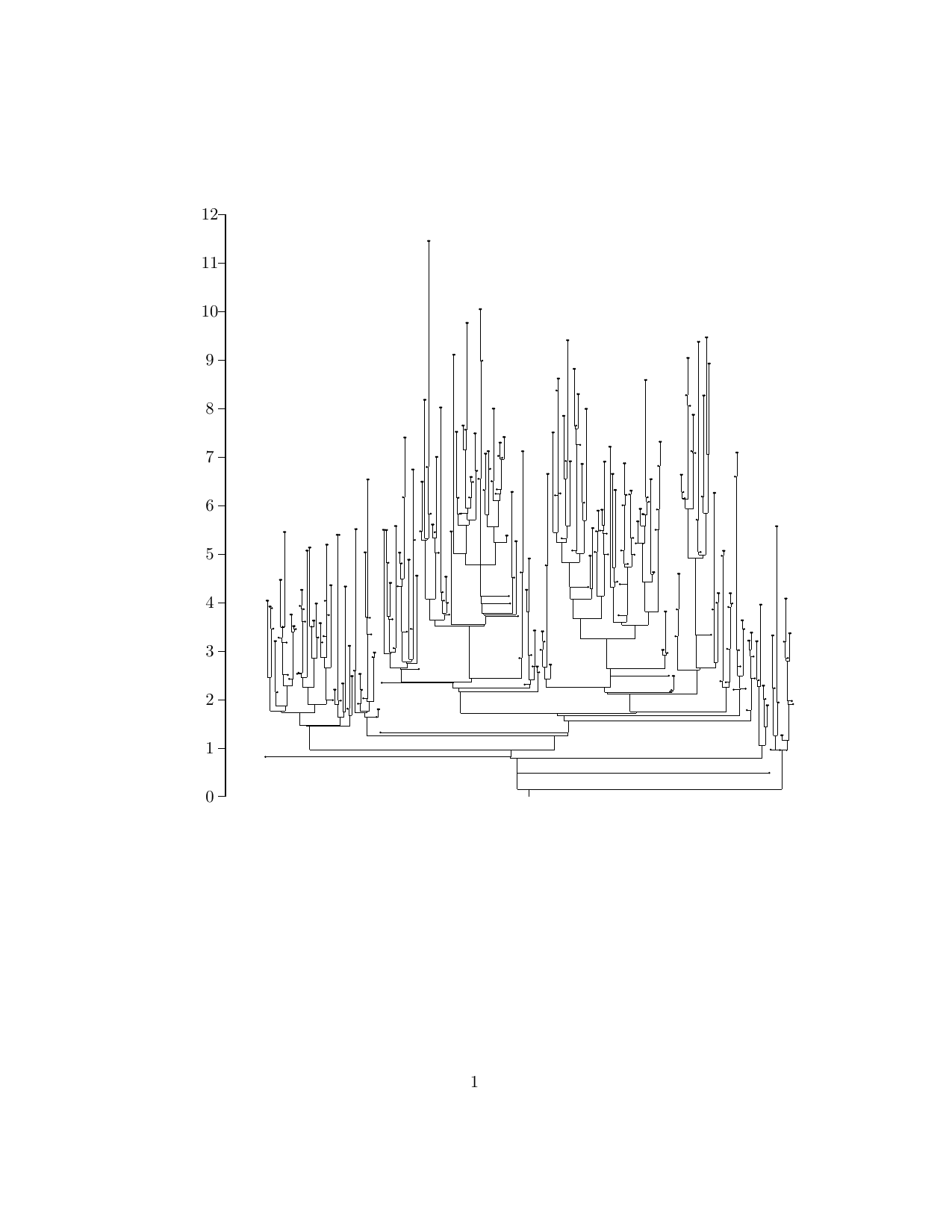}
\vspace*{-4.1in}
\caption{A realization of the tree-representation of the $\CTCS(n)$ model with $n = 400$. 
Drawn as in the previous Figure, so the width of subtrees above a given time level are the sizes of clades at that time.}
\label{Fig:400}
\end{figure}

\begin{Remark}\label{R:infty}
In our representations of the trees, we stop at each leaf.
It is sometimes advantageous to consider an \emph{extended representation} 
where we add a vertical line to infinity from each leaf;
then every clade is represented by a vertical line, extending from the time
the clade is created until it splits (if ever).
This is particularly attractive for $\CTCS(n)$, since leaves split with rate
$0=h_0$ (i.e., never), and thus 
the extended representation 
has for each clade of size $m$ (including leaves)
a vertical line of length $\Expo(h_{m-1})$ 
(interpreted as $\infty$ when $m=1$ so the rate $h_{m-1}=0$)
showing the interval of time that the clade lives.
In this representation, the leaves are located at the bottoms of the
infinite lines (i.e., where the lines branch off from other lines, going
from there to infinity without further branching).
\end{Remark}

Note that in the construction above,
we have labelled the leaves $1,\dots,n$ from left to right.
This is sometimes convenient, but it is often artificial. In particular, the
leaves are not equivalent; for example $1$ and $n$ are the only leaves that
can have height 1, and quantities such as the expectation of the height of
leaf $i$ (in either $\CTCS(n)$ or $\DTCS(n)$) will depend on $i$.
We call this version of $\CTCS(n)$ and $\DTCS(n)$ \emph{ordered};
in the next subsection we consider an alternative.

\subsection{The unordered versions}\label{SSunordered}

In the versions of the construction in \refSS{SSordered},
the leaves are ordered before we start.
Conceptually we are instead usually thinking of recursively splitting a set of
objects which have labels (so they are distinguishable) but  
without any prior structure on the label-set.
Without loss of generality, we can still assume that the set of labels is
$[n]$, but now, each time a clade of size $m$ is to be split into a left
subclade of size $i$ and a right subclade of size $m-i$, we choose the left
subclade uniformly at random among all $\binom mi$ subsets of size $i$.
Otherwise, the construction is exactly as in \refSS{SSordered}.
This yields the \emph{unordered} versions of $\DTCS(n)$ and $\CTCS(n)$.

Note that we may obtain the unordered versions from the ordered ones
by applying a uniform random permutation to the labels of the leaves.
Conversely, we may obtain the ordered versions from the unordered ones by
relabelling the vertices in order from left to right.
Consequently, any properties of the tree that do not depend on the labels of
the leaves are the same for the ordered and unordered versions; two
examples are the height of the tree (i.e., the maximum of the heights of the
leaves), and, for CTCS, the sum of all edge lengths.
Moreover, any properties of the path to a uniform random leaf
will have the same distribution for the ordered and unordered version,

The unordered versions of $\DTCS(n)$ and $\CTCS(n)$
have for our purposes the advantage that they (by definition)
are invariant under permutation of the leaf-labels.
(This of course is a certain type of finite exchangeability.)
Hence, for example, the ``path to a uniform random leaf" 
is equivalent (in distribution) to ``path to leaf $1$".
And ``delete a uniform random leaf" is equivalent to ``delete leaf $n$".
(Recall that this does not hold for the ordered versions.)

Another important advantage of the unordered versions
is the consistency  property in the next subsection.
For this reason,
{\em
in the sequel we will always use the unordered versions unless we
explicitly say otherwise.
}

\begin{Remark}\label{R:unlabelled}
  We may also consider \emph{unlabelled versions} where leaves are not
  labelled.
The left/right distinction still matters, and thus the leaves can still be
identified by their positions.
Hence, the unlabelled versions are equivalent to the unordered versions;
we may obtain the unordered versions by randomly labelling the leaves in the
unlabelled versions. 
\end{Remark}

\subsection{The consistency property and the growth algorithm}
\label{sec:consistency}

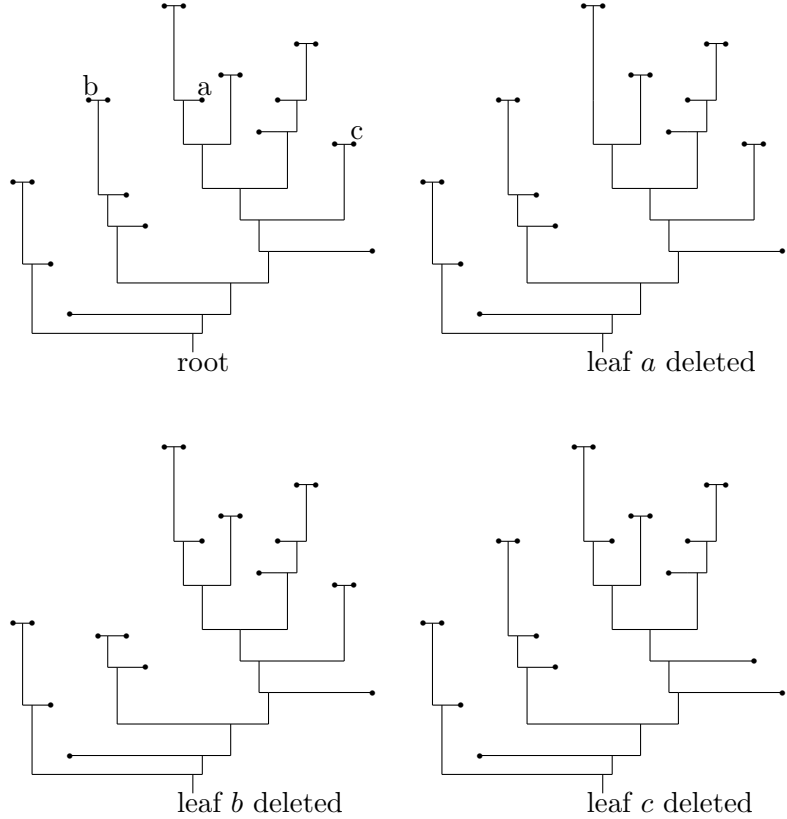
\begin{figure}
\setlength{\unitlength}{0.033in}
\begin{picture}(140,140)(0,-5)

\put(3,3){\line(1,0){27}}
\put(28.5,3){\line(0,-1){3}}

\put(1.5,14){\line(1,0){4.5}}
\put(3,3){\line(0,1){11}}
\put(6,14){\circle*{0.9}}

\put(0,27){\line(1,0){3}}
\put(1.5,14){\line(0,1){13}}
\put(0,27){\circle*{0.9}}
\put(3,27){\circle*{0.9}}

\put(30,3){\line(0,1){3}}
\put(9,6){\line(1,0){25.5}}
\put(9,6){\circle*{0.9}}

\put(34.5,6){\line(0,1){5}}
\put(16.5,11){\line(1,0){24}}
\put(16.5,11){\line(0,1){9}}

\put(16.5,20){\line(1,0){4.5}}
\put(21,20){\circle*{0.9}}
\put(16.5,20){\line(-1,0){1.5}}

\put(15,20){\line(0,1){5}}
\put(15,25){\line(-1,0){1.5}}
\put(15,25){\line(1,0){3}}
\put(18,25){\circle*{0.9}}
\put(13.5,25){\circle*{0.9}}

\put(40.5,11){\line(0,1){5}}
\put(40.5,16){\line(1,0){16.5}}
\put(57,16){\circle*{0.9}}
\put(40.5,16){\line(-1,0){1.5}}

\put(39,16){\line(0,1){5}}
\put(39,21){\line(1,0){13.5}}
\put(39,21){\line(-1,0){3}}

\put(36,21){\line(0,1){5}}
\put(36,26){\line(1,0){7.5}}
\put(36,26){\line(-1,0){6}}

\put(30,26){\line(0,1){7}}
\put(30,33){\line(1,0){4.5}}
\put(30,33){\line(-1,0){3}}

\put(27,33){\line(0,1){7}}
\put(27,40){\line(1,0){3}}
\put(27,40){\line(-1,0){1.5}}
\put(30,40){\circle*{0.9}}

\put(25.5,40){\line(0,1){15}}
\put(24,55){\line(1,0){3}}
\multiput(24,55)(3,0){2}{\circle*{0.9}}

\put(34.5,33){\line(0,1){11}}
\put(33,44){\line(1,0){3}}
\multiput(33,44)(3,0){2}{\circle*{0.9}}

\put(43.5,26){\line(0,1){9}}
\put(42,35){\line(1,0){3}}
\put(45,35){\line(0,1){5}}

\put(42,40){\line(1,0){4.5}}
\put(42,40){\circle*{0.9}}
\put(46.5,40){\line(0,1){9}}
\multiput(45,49)(3,0){2}{\circle*{0.9}}
\put(45,49){\line(1,0){3}}

\put(52.5,21){\line(0,1){12}}
\multiput(51,33)(3,0){2}{\circle*{0.9}}
\put(51,33){\line(1,0){3}}

\put(39,35){\circle*{0.9}}
\put(42,35){\line(-1,0){3}}

\put(26,-3){leaf $b$ deleted}


\put(68,3){\line(1,0){27}}
\put(93.5,3){\line(0,-1){3}}

\put(66.5,14){\line(1,0){4.5}}
\put(68,3){\line(0,1){11}}
\put(71,14){\circle*{0.9}}

\put(65,27){\line(1,0){3}}
\put(66.5,14){\line(0,1){13}}
\put(65,27){\circle*{0.9}}
\put(68,27){\circle*{0.9}}

\put(95,3){\line(0,1){3}}
\put(74,6){\line(1,0){25.5}}
\put(74,6){\circle*{0.9}}

\put(99.5,6){\line(0,1){5}}
\put(81.5,11){\line(1,0){24}}
\put(81.5,11){\line(0,1){9}}

\put(81.5,20){\line(1,0){4.5}}
\put(86,20){\circle*{0.9}}
\put(81.5,20){\line(-1,0){1.5}}

\put(80,20){\line(0,1){5}}
\put(80,25){\line(-1,0){1.5}}
\put(80,25){\line(1,0){3}}
\put(83,25){\circle*{0.9}}

\put(78.5,25){\line(0,1){15}}
\put(77,40){\line(1,0){3}}
\put(77,40){\circle*{0.9}}
\put(80,40){\circle*{0.9}}

\put(105.5,11){\line(0,1){5}}
\put(105.5,16){\line(1,0){16.5}}
\put(122,16){\circle*{0.9}}
\put(105.5,16){\line(-1,0){1.5}}

\put(104,16){\line(0,1){5}}
\put(104,21){\line(1,0){13.5}}
\put(104,21){\line(-1,0){3}}

\put(101,21){\line(0,1){5}}
\put(101,26){\line(1,0){7.5}}
\put(101,26){\line(-1,0){6}}

\put(95,26){\line(0,1){7}}
\put(95,33){\line(1,0){4.5}}
\put(95,33){\line(-1,0){3}}

\put(92,33){\line(0,1){7}}
\put(92,40){\line(1,0){3}}
\put(92,40){\line(-1,0){1.5}}
\put(95,40){\circle*{0.9}}

\put(90.5,40){\line(0,1){15}}
\put(89,55){\line(1,0){3}}
\multiput(89,55)(3,0){2}{\circle*{0.9}}

\put(99.5,33){\line(0,1){11}}
\put(98,44){\line(1,0){3}}
\multiput(98,44)(3,0){2}{\circle*{0.9}}

\put(108.5,26){\line(0,1){9}}
\put(107,35){\line(1,0){3}}
\put(110,35){\line(0,1){5}}

\put(107,40){\line(1,0){4.5}}
\put(107,40){\circle*{0.9}}
\put(111.5,40){\line(0,1){9}}
\multiput(110,49)(3,0){2}{\circle*{0.9}}
\put(110,49){\line(1,0){3}}

\put(117.5,21){\circle*{0.9}}

\put(104,35){\circle*{0.9}}
\put(107,35){\line(-1,0){3}}

\put(91,-3){leaf $c$ deleted}


\put(3,73){\line(1,0){27}}
\put(28.5,73){\line(0,-1){3}}

\put(1.5,84){\line(1,0){4.5}}
\put(3,73){\line(0,1){11}}
\put(6,84){\circle*{0.9}}

\put(0,97){\line(1,0){3}}
\put(1.5,84){\line(0,1){13}}
\put(0,97){\circle*{0.9}}
\put(3,97){\circle*{0.9}}

\put(30,73){\line(0,1){3}}
\put(9,76){\line(1,0){25.5}}
\put(9,76){\circle*{0.9}}

\put(34.5,76){\line(0,1){5}}
\put(16.5,81){\line(1,0){24}}
\put(16.5,81){\line(0,1){9}}

\put(16.5,90){\line(1,0){4.5}}
\put(21,90){\circle*{0.9}}
\put(16.5,90){\line(-1,0){1.5}}

\put(15,90){\line(0,1){5}}
\put(15,95){\line(-1,0){1.5}}
\put(15,95){\line(1,0){3}}
\put(18,95){\circle*{0.9}}

\put(13.5,95){\line(0,1){15}}
\put(12,110){\line(1,0){3}}
\put(12,110){\circle*{0.9}}
\put(15,110){\circle*{0.9}}
\put(11.0,110.9){b}

\put(40.5,81){\line(0,1){5}}
\put(40.5,86){\line(1,0){16.5}}
\put(57,86){\circle*{0.9}}
\put(40.5,86){\line(-1,0){1.5}}

\put(39,86){\line(0,1){5}}
\put(39,91){\line(1,0){13.5}}
\put(39,91){\line(-1,0){3}}

\put(36,91){\line(0,1){5}}
\put(36,96){\line(1,0){7.5}}
\put(36,96){\line(-1,0){6}}

\put(30,96){\line(0,1){7}}
\put(30,103){\line(1,0){4.5}}
\put(30,103){\line(-1,0){3}}

\put(27,103){\line(0,1){7}}
\put(27,110){\line(1,0){3}}
\put(27,110){\line(-1,0){1.5}}
\put(30,110){\circle*{0.9}}
\put(29.2,110.8){a}

\put(25.5,110){\line(0,1){15}}
\put(24,125){\line(1,0){3}}
\multiput(24,125)(3,0){2}{\circle*{0.9}}

\put(34.5,103){\line(0,1){11}}
\put(33,114){\line(1,0){3}}
\multiput(33,114)(3,0){2}{\circle*{0.9}}

\put(43.5,96){\line(0,1){9}}
\put(42,105){\line(1,0){3}}
\put(45,105){\line(0,1){5}}

\put(42,110){\line(1,0){4.5}}
\put(42,110){\circle*{0.9}}
\put(46.5,110){\line(0,1){9}}
\multiput(45,119)(3,0){2}{\circle*{0.9}}
\put(45,119){\line(1,0){3}}

\put(52.5,91){\line(0,1){12}}
\multiput(51,103)(3,0){2}{\circle*{0.9}}
\put(51,103){\line(1,0){3}}
\put(53.5,103.8){c}

\put(39,105){\circle*{0.9}}
\put(42,105){\line(-1,0){3}}

\put(26,67){root}


\put(68,73){\line(1,0){27}}
\put(93.5,73){\line(0,-1){3}}

\put(66.5,84){\line(1,0){4.5}}
\put(68,73){\line(0,1){11}}
\put(71,84){\circle*{0.9}}

\put(65,97){\line(1,0){3}}
\put(66.5,84){\line(0,1){13}}
\put(65,97){\circle*{0.9}}
\put(68,97){\circle*{0.9}}

\put(95,73){\line(0,1){3}}
\put(74,76){\line(1,0){25.5}}
\put(74,76){\circle*{0.9}}

\put(99.5,76){\line(0,1){5}}
\put(81.5,81){\line(1,0){24}}
\put(81.5,81){\line(0,1){9}}

\put(81.5,90){\line(1,0){4.5}}
\put(86,90){\circle*{0.9}}
\put(81.5,90){\line(-1,0){1.5}}

\put(80,90){\line(0,1){5}}
\put(80,95){\line(-1,0){1.5}}
\put(80,95){\line(1,0){3}}
\put(83,95){\circle*{0.9}}

\put(78.5,95){\line(0,1){15}}
\put(77,110){\line(1,0){3}}
\put(77,110){\circle*{0.9}}
\put(80,110){\circle*{0.9}}

\put(105.5,81){\line(0,1){5}}
\put(105.5,86){\line(1,0){16.5}}
\put(122,86){\circle*{0.9}}
\put(105.5,86){\line(-1,0){1.5}}

\put(104,86){\line(0,1){5}}
\put(104,91){\line(1,0){13.5}}
\put(104,91){\line(-1,0){3}}

\put(101,91){\line(0,1){5}}
\put(101,96){\line(1,0){7.5}}
\put(101,96){\line(-1,0){6}}

\put(95,96){\line(0,1){7}}
\put(95,103){\line(1,0){4.5}}
\put(95,103){\line(-1,0){3}}

\put(92,103){\line(0,1){7}}

\put(92,110){\line(0,1){15}}
\put(90.5,125){\line(1,0){3}}
\multiput(90.5,125)(3,0){2}{\circle*{0.9}}

\put(99.5,103){\line(0,1){11}}
\put(98,114){\line(1,0){3}}
\multiput(98,114)(3,0){2}{\circle*{0.9}}

\put(108.5,96){\line(0,1){9}}
\put(107,105){\line(1,0){3}}
\put(110,105){\line(0,1){5}}

\put(107,110){\line(1,0){4.5}}
\put(107,110){\circle*{0.9}}
\put(111.5,110){\line(0,1){9}}
\multiput(110,119)(3,0){2}{\circle*{0.9}}
\put(110,119){\line(1,0){3}}

\put(117.5,91){\line(0,1){12}}
\multiput(116,103)(3,0){2}{\circle*{0.9}}
\put(116,103){\line(1,0){3}}

\put(104,105){\circle*{0.9}}
\put(107,105){\line(-1,0){3}}

\put(91,67){leaf $a$ deleted}

 \end{picture}
\caption{The delete and prune operation: effect of deleting leaf $a$ or $b$ or $c$ from the top left tree.}
\label{Fig:dpo}
\end{figure}

Observe that all versions of the construction above are for a given $n$;
there is no direct connection between the model (discrete or continuous) for
$n$ and the model for $n+1$. 
Nevertheless, for the unordered versions
we have the following {\em consistency property}.

Note that if we delete a leaf $k$,
then we also have to delete the 
internal node that is the mother of $k$ (merging the other two edges at that
node into one), and if the mother has only one other child, we also have to
reduce the height of that child to its grandmother's;
see \refF{Fig:dpo}. We call this  operation ``delete and prune leaf $k$''.

 \begin{Theorem} [Consistency property] 
 \label{T:consistent}
 The operation ``delete and prune leaf $n+1$ from $\CTCS(n+1)$" gives a tree
 distributed as $\CTCS(n)$, and similarly for $\DTCS(n+1)$ and $\DTCS(n)$.
 \end{Theorem}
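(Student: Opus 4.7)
The plan is to induct on $n$, conditioning on the root split of $\CTCS(n+1)$ and analyzing the delete-and-prune operation case-by-case. By exchangeability of the unordered construction, deleting leaf $n+1$ is equivalent to deleting a uniformly chosen leaf; I use this form throughout. The base case $n=1$ is immediate, since $\CTCS(2)$ consists of a single internal node with two leaf children, and deleting either leaf together with its mother (the root) leaves a single leaf, which is $\CTCS(1)$.

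For the induction step, condition on the root split: at time $\tau\sim\Expo(h_n)$ the root of $\CTCS(n+1)$ splits into sizes $(J, n+1-J)$ with $J\sim q(n+1,\cdot)$, producing independent subtrees $T_L\sim\CTCS(J)$ and $T_R\sim\CTCS(n+1-J)$. By exchangeability within each subclade, given $J=j$ the uniformly chosen leaf lies in $T_L$ with probability $j/(n+1)$. Two cases arise. In the \emph{non-pruning case} the deleted leaf lies in a subclade of size at least $2$; the root is preserved with lifetime $\tau\sim\Expo(h_n)$, and delete-and-prune is applied recursively to the subclade containing the leaf, yielding a subtree $\sim\CTCS$ of reduced size by the induction hypothesis. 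In the \emph{pruning case} the deleted leaf is a singleton child of the root (so $J=1$ with leaf in $T_L$, or symmetrically); the root is removed and, by the edge-merging convention, its lifetime $\tau$ is added to the surviving subtree's root's original lifetime $\sigma\sim\Expo(h_{n-1})$, producing a new root with lifetime $\tau+\sigma$. The pruning event has total probability $1/(nh_n)$.

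To verify the mixture is $\CTCS(n)$, I compute the joint density of the first split $(\tau',I')$ of the new tree at $(t,i)$. In the non-pruning case the two sub-cases ``leaf in left subclade of size $i+1$'' and ``leaf in right subclade of size $n+1-i$'' combine via the identity
\begin{equation*}
\xq(n+1,i+1)\cdot\tfrac{i+1}{n+1}+\xq(n+1,i)\cdot\tfrac{n+1-i}{n+1}=\tfrac{1}{2(n-i)}+\tfrac{1}{2i}=\xq(n,i),
\end{equation*}
to yield joint density $\xq(n,i)\,e^{-h_n t}$. In the pruning case, the convolution of $\tau\sim\Expo(h_n)$ and $\sigma\sim\Expo(h_{n-1})$ has density $nh_nh_{n-1}(e^{-h_{n-1}t}-e^{-h_n t})$ using $h_n-h_{n-1}=1/n$; weighting by $1/(nh_n)$ and by the split size probability $q(n,i)$ from the surviving $\CTCS(n)$ gives contribution $\xq(n,i)\bigl(e^{-h_{n-1}t}-e^{-h_n t}\bigr)$. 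Summing the two contributions yields exactly $\xq(n,i)\,e^{-h_{n-1}t}$, the first-split density of $\CTCS(n)$. Independence of the two resulting subtrees, and their individual $\CTCS$ laws, follows from the independence of $T_L,T_R$ together with the induction hypothesis applied inside each affected subtree.

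The main obstacle is the delicate memoryless bookkeeping for the root's lifetime: in the pruning case the merged lifetime $\tau+\sigma$ is hypoexponential rather than exponential, and only the specific weights $1/(nh_n)$ and $1-1/(nh_n)$—which emerge from $q(n+1,1)/(n+1)$ summed symmetrically over the two sides—combined with the identity $h_n-h_{n-1}=1/n$, cause the cancellation that reproduces the clean exponential $\Expo(h_{n-1})$ required by $\CTCS(n)$. The statement for $\DTCS$ follows from the same case analysis applied to the combinatorial tree structure, since $\DTCS$ and $\CTCS$ share the underlying binary tree (with deterministic unit edge lengths replacing the random exponential lifetimes, so the delicate mixing step is not needed).
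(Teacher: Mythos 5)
Your proof is correct, but it takes a genuinely different route from either of the paper's two arguments. The paper's primary proof replaces leaf $n+1$ by an ``invisible ghost'' and shows that the \emph{visible} splitting rates of every clade are unchanged; the identity you use at the root,
$\xq(n+1,i+1)\tfrac{i+1}{n+1}+\xq(n+1,i)\tfrac{n+1-i}{n+1}=\xq(n,i)$,
is exactly the paper's computation \eqref{f1}, but applied there to an arbitrary clade rather than only to the root. Because the visible rate of a clade with $m$ visible leaves stays constant at $h_{m-1}$ whether or not the ghost is present, the ghost argument never has to confront the hypoexponential lifetime: your explicit verification that the mixture of $\Expo(h_n)$ (with weight $1-\tfrac{1}{nh_n}$) and the convolution $\Expo(h_n)*\Expo(h_{n-1})$ (with weight $\tfrac{1}{nh_n}$) recombines into $\Expo(h_{n-1})$ is precisely the by-hand version of what constancy of the visible rate gives for free. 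The trade-off is that your induction only ever needs to analyze the root split, pushing everything else into the induction hypothesis, whereas the ghost argument handles all clades at once and, by tracking the ghost's trajectory, additionally yields the growth algorithm (Theorem \ref{T:growth}); your approach does not produce that byproduct. Your proof is also closer in spirit to the paper's second, appendix proof (explicit density bookkeeping), but organized around the root split rather than around ratios of densities along the whole path to the deleted bud. One small point worth making explicit in a final write-up: to conclude that the mixture is $\CTCS(n)$ and not just that the first-split marginal is right, you should note that in \emph{every} case (pruning or not, left or right) the conditional law of the two subtrees given the new first-split data $(\tau',I')$ is the same, namely independent $\CTCS(i)$ and $\CTCS(n-i)$ independent of $\tau'$; since this conditional law does not vary across the cases being mixed, the product structure of $\CTCS(n)$ follows. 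This is routine given your setup (uniformity of the deleted leaf within its subclade, plus the induction hypothesis), but it is the step that turns the first-split computation into the full claim.
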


This consistency property is in fact a special case of 
\cite[Theorem 1 and Proposition 3]{haas-pitman},
where \emph{all} consistent splitting rules are characterized using the
theory of homogeneous fragmentation processes; the connection to such
processes will be discussed in \refS{sec:exch} below.
We will give a simple direct proof of \refT{T:consistent} in \refS{sec:ghost},
which furthermore leads to a proof of the growth algorithm in
\refT{T:growth} below. 
An alternative, elementary but longer, proof is given in \refApp{sec:proofCP}.

\refT{T:consistent} implies that if we start
at some large $N$ and repeatedly delete and prune the last leaf,
we obtain a realization of the sequence $(\CTCS(n))_{n=1}^N$ with the correct
marginal distributions. By Kolmogorov's extension theorem,
there exists an  infinite {\em consistent growth process} 
$(\CTCS(n), n = 1,2,3,\ldots)$ such that, for each $n$,  
 ``delete and prune leaf $n$ from the realization of $\CTCS(n+1)$" gives
 exactly the realization of $\CTCS(n)$.
Conversely, the realization of $\CTCS(n+1)$ is obtained by adding a new leaf
$n+1$ to $\CTCS(n)$ at the appropriate place 
(i.e., at a random place with a specified distribution).
It turns out that this addition can be
described by the following explicit {\em growth algorithm}.

In the context of {\em growth} of trees, it is more evocative to use the
word {\em buds} instead of {\em leaves}, which we use in the following.
In Figure \ref{Fig:dpo} we see {\em side-buds} such as $a$, and
{\em bud-pairs} such as $b,c$.

\begin{Theorem}[The growth algorithm]\label{T:growth}
   Given a realization of\/ $\CTCS(n)$ for some $n \ge 1$:
 \begin{enumerate}
 \item Pick a uniform random bud; move up the path from the root toward that
   bud. 
A \stopp{} event occurs at rate = $1$/(size of clade from current position).
 \item If\/ \stopp{} before reaching the target bud, make a side-bud at that
   point, random on left or right. 
 \item Otherwise, extend the target bud into a branch of\/ $\Expo(1)$
   length to make a bud-pair. 
\end{enumerate}
 Then the result is a realization of\/ $\CTCS(n+1)$.
Consequently, 
we obtain a realization of the growth process $\bigpar{\CTCS(n), n=1,2,\dots}$
by starting with $\CTCS(1)$, which has a single bud at the root,
and then repeating this algorithm ad infinitum.
\end{Theorem}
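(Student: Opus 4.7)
The plan is to verify that the insertion rule for leaf $n+1$ prescribed by the growth algorithm matches the conditional distribution of $\CTCS(n+1)$ given $\CTCS(n)$ under the consistency coupling of Theorem~\ref{T:consistent}. Everything turns on the identity $h_m = h_{m-1} + 1/m$, reflected in a decomposition of the Poisson splitting clock of an $(m+1)$-clade of $\CTCS(n+1)$ containing $n+1$ into two independent clocks: a \emph{normal} one at rate $h_{m-1}$ producing a $\CTCS(n)$-visible split, and a \emph{singleton} one at rate $1/m$ separating out $\{n+1\}$.

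First, by the exchangeability of the unordered version, I would treat $n+1$ as a uniformly random leaf of $\CTCS(n+1)$ and trace the chain $C_0 \supset C_1 \supset \cdots \supset C_k = \{n+1\}$ of clades containing it, with $|C_i| = M_i$, together with the companion clades $\widetilde C_i := C_i \setminus \{n+1\}$ of sizes $m_i := M_i - 1$, which form a root-to-leaf path in $\CTCS(n)$. Using $\xq(M,i) = \frac{M}{2i(M-i)}$ and the exchangeability fact that, given $C_i$ splits into parts of sizes $i$ and $M_i - i$, the leaf $n+1$ lies in the $i$-sized part with probability $i/M_i$, a partial-fraction calculation yields, for each $1 \le j \le m_i - 1$,
\begin{equation*}
\xq(M_i, j+1)\tfrac{j+1}{M_i} = \tfrac{1}{2(m_i - j)}, \qquad \xq(M_i, j)\tfrac{M_i - j}{M_i} = \tfrac{1}{2j},
\end{equation*}
whose sum $\tfrac{1}{2j} + \tfrac{1}{2(m_i - j)} = h_{m_i-1}\, q(m_i, j)$ matches exactly the $\CTCS(n)$-rate at which $\widetilde C_i$ splits into $(j, m_i - j)$, and conditional on such a split the ghost $n+1$ lies in the $j$-sized child with probability $j/m_i$. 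The remaining rate, coming from the two splits of $C_i$ in which $\{n+1\}$ is itself a singleton child, totals $1/m_i$ with fair left/right symmetry.

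Second, I would package the above as two independent families of Poisson events driving the joint construction of $(\CTCS(n+1), n+1)$: the normal family (rate $h_{m_i-1}$ per ghost-containing clade) reproduces the $\CTCS(n)$ dynamics exactly, the ghost following each split into its subclade in proportion to size --- so Theorem~\ref{T:consistent} is re-derived in passing --- while the independent singleton family (rate $1/m_i$) ends the ghost's walk by inserting a side-bud $\{n+1\}$ at the firing moment, left or right with a fair coin. By independence I may sample the normal family first to build all of $\CTCS(n)$ together with the ghost's random path through it; telescoping the side-choice probabilities along any root-to-leaf path gives $\prod_i (m_{i+1}/m_i) = 1/n$, so the terminal leaf is uniform over the $n$ leaves of $\CTCS(n)$; then I overlay the singleton clock along that path.

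Finally, the growth algorithm falls out: pick the target bud uniformly, move up the root-to-bud path while running the singleton clock at rate $1/m$ in the current clade of size $m$; a firing at an interior clade inserts $n+1$ as a side-bud with a fair left/right coin, while if the clock does not fire before reaching the target bud (a $\CTCS(n)$-clade of size $m=1$), the remaining singleton rate is $1/1 = 1$, so after an $\Exp(1)$ wait the singleton split creates the bud-pair. The main obstacle is the rate-decomposition step: identifying truly \emph{independent} normal and singleton Poisson clocks whose superposition reproduces the $\CTCS(n+1)$ splitting process. Once that independence is in place, the telescoping argument for uniformity and the readoff of the three cases in the algorithm are mechanical.
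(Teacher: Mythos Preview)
Your proposal is correct and is essentially the paper's own ``ghost'' argument: you track leaf $n+1$ through $\CTCS(n+1)$, decompose the splitting rate $h_m=h_{m-1}+1/m$ of its clade into the visible-split rate and the singleton-split rate, observe that the visible splits reproduce $\CTCS(n)$ with the ghost following each split in proportion to size (hence following a uniform leaf), and read off the growth algorithm from the independent singleton clock. The only cosmetic difference is that you phrase the decomposition explicitly as a Poisson thinning into independent ``normal'' and ``singleton'' clocks and spell out the telescoping for uniformity, whereas the paper computes the same rates and conditional probabilities directly.
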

The proof is given in  \refS{sec:ghost}; an alternative proof, 
which also gives explicit formulas for probability densities, 
is given in \refApp{sec:proofCP}.

To visualize the growth step, 
the addition of a new bud can happen in one of three qualitative ways,
illustrated in Figure \ref{Fig:3b}, as the reverse of the ``cut" in Figure
\ref{Fig:dpo}.
The addition is either
what we will call a \emph{side-bud addition}
(case {\bf a}  in Figure \ref{Fig:3b})
in which a side-bud is
attached at the interior of some existing edge,
or a \emph{branch extension} (case {\bf b}) in which one bud of a terminal pair grows into a new branch to a 
terminal pair of buds, 
or a {\em side-bud extension} (case {\bf c}) in which a side-bud grows into a new branch with two terminal buds.

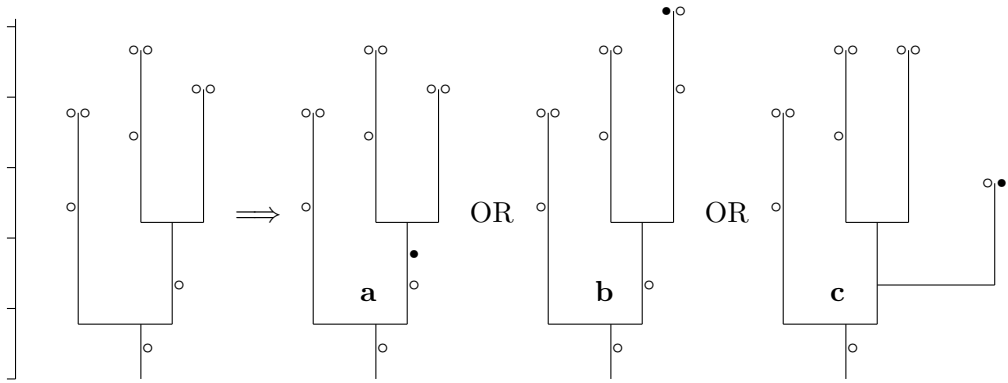
\begin{figure}[ht]
\setlength{\unitlength}{0.041in}
\begin{picture}(60,50)(80,0)
\put(44,0){\line(0,1){46}}
\multiput(44,0)(0,9){7}{\line(-1,0){1.1}}

\put(60,0){\line(0,1){7}}
\put(60.9,4){\circle{1.1}}
\put(52,7){\line(1,0){12}}
\put(52,7){\line(0,1){27}}
\put(51.1,22){\circle{1.1}}
\put(51.1,34){\circle{1.1}}
\put(52.9,34){\circle{1.1}}
\put(64,7){\line(0,1){13}}
\put(60,20){\line(1,0){8}}
\put(64.9,12){\circle{1.1}}
\put(60,20){\line(0,1){22}}
\put(59.1,31){\circle{1.1}}
\put(59.1,42){\circle{1.1}}
\put(60.9,42){\circle{1.1}}
\put(68,20){\line(0,1){17}}
\put(67.1,37){\circle{1.1}}
\put(68.9,37){\circle{1.1}}
\put(90,0){\line(0,1){7}}
\put(90.9,4){\circle{1.1}}
\put(82,7){\line(1,0){12}}
\put(82,7){\line(0,1){27}}
\put(81.1,22){\circle{1.1}}
\put(81.1,34){\circle{1.1}}
\put(82.9,34){\circle{1.1}}
\put(94,7){\line(0,1){13}}
\put(90,20){\line(1,0){8}}
\put(94.9,12){\circle{1.1}}
\put(90,20){\line(0,1){22}}
\put(89.1,31){\circle{1.1}}
\put(89.1,42){\circle{1.1}}
\put(90.9,42){\circle{1.1}}
\put(98,20){\line(0,1){17}}
\put(97.1,37){\circle{1.1}}
\put(98.9,37){\circle{1.1}}

\put(94.9,16){\circle*{1.1}}
\put(120,0){\line(0,1){7}}
\put(120.9,4){\circle{1.1}}
\put(112,7){\line(1,0){12}}
\put(112,7){\line(0,1){27}}
\put(111.1,22){\circle{1.1}}
\put(111.1,34){\circle{1.1}}
\put(112.9,34){\circle{1.1}}
\put(124,7){\line(0,1){13}}
\put(120,20){\line(1,0){8}}
\put(124.9,12){\circle{1.1}}
\put(120,20){\line(0,1){22}}
\put(119.1,31){\circle{1.1}}
\put(119.1,42){\circle{1.1}}
\put(120.9,42){\circle{1.1}}
\put(128,20){\line(0,1){27}}
\put(128.9,37){\circle{1.1}}
\put(127.1,47){\circle*{1.1}}
\put(128.9,47){\circle{1.1}}

\put(150,0){\line(0,1){7}}
\put(150.9,4){\circle{1.1}}
\put(142,7){\line(1,0){12}}
\put(142,7){\line(0,1){27}}
\put(141.1,22){\circle{1.1}}
\put(141.1,34){\circle{1.1}}
\put(142.9,34){\circle{1.1}}
\put(154,7){\line(0,1){13}}
\put(150,20){\line(1,0){8}}
\put(154,12){\line(1,0){15}}
\put(169,12){\line(0,1){13}}
\put(169.9,25){\circle*{1.1}}
\put(168.1,25){\circle{1.1}}

\put(150,20){\line(0,1){22}}
\put(149.1,31){\circle{1.1}}
\put(149.1,42){\circle{1.1}}
\put(150.9,42){\circle{1.1}}
\put(158,20){\line(0,1){22}}
\put(158.9,42){\circle{1.1}}
\put(157.1,42){\circle{1.1}}

\put(88,10){{\bf a}}
\put(118,10){{\bf b}}
\put(148,10){{\bf c}}

\put(72,20){$\Longrightarrow$}
\put(102,20){OR}
\put(132,20){OR}
\end{picture}
\caption{Possible transitions from CTCS(10) to CTCS(11): the added bud is $\bullet$.}
\label{Fig:3b}
\end{figure}

\begin{Remark}\label{R:growth2}
Using the extended representation in \refR{R:infty}, the growth
algorithm in \refT{T:growth} has an even simpler description:
 \begin{enumerate}
 \item Pick a uniform random path to infinity (corresponding to a uniform
   random bud); 
move up this path from the root toward infinity. 
 A \stopp{} event occurs at rate = $1$/(size of clade from current position).
 \item At \stopp, make a side-bud at that point, random on left or right. 
Add a vertical line from the new bud to infinity.
If the current clade at \stopp{} had size 1, so \stopp{} occurred on the
line from some bud to infinity, move also that bud up along the line to the
same height as the new bud.
\end{enumerate}
\end{Remark}

\begin{Corollary}
\label{CP:split}
Let $B_n$ denote the height of the branchpoint between the paths to two
uniform random distinct leaves of $\CTCS(n)$. 
Then, for each $n \geq 2$, $B_n$ has exactly $\Expo(1)$ distribution.
\end{Corollary}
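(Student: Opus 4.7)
The plan is to combine finite exchangeability of the unordered $\CTCS(n)$ with iterated application of the consistency property (\refT{T:consistent}). Since the unordered version is invariant under permutations of leaf labels, $B_n$ has the same distribution as the height of the most recent common ancestor (MRCA) of the specific leaves $1$ and $2$. It therefore suffices to compute this latter quantity.

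Next I would apply ``delete and prune leaf $k$'' successively for $k=n,n-1,\dots,3$. By \refT{T:consistent}, each step preserves the $\CTCS$-law of what remains, and after $n-2$ steps we are left with a tree on $\{1,2\}$ distributed as $\CTCS(2)$. But $\CTCS(2)$ has a single internal node, namely the MRCA of $1$ and $2$, created by the root split, which occurs at rate $h_1=1$; hence its height is $\Expo(1)$. Putting this together with the exchangeability reduction in the first step will give $B_n \sim \Expo(1)$.

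The main obstacle is to check that each prune step preserves the birth time of the MRCA of leaves $1$ and $2$, so that the height recorded in the final $\CTCS(2)$ tree really equals $B_n$ as constructed from the original $\CTCS(n)$. I would verify this by a short binary-tree structural observation. Let $\mu$ be the current MRCA of $1$ and $2$, and let $k\notin\{1,2\}$ be the leaf being pruned, with mother $m$. Then $\mu\ne m$: otherwise $k$ would be one of the two children of $\mu$, contradicting the requirement that both of $\mu$'s children be ancestors of $1$ or of $2$. Moreover, if $\mu$ happens to coincide with the sibling $v$ of $k$, then $\mu$ is necessarily an \emph{internal} node (having $1$ and $2$ among its descendants), so the ``reduce height to the grandmother's'' clause in the prune operation -- which is triggered only when $v$ is itself a leaf -- does not apply. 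Hence $\mu$ and its height survive each prune intact, and the three-step reduction above yields the claimed $\Expo(1)$ distribution for $B_n$.
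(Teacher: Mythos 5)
Your proof is correct and is essentially the paper's argument run in reverse: the paper couples all $\CTCS(n)$ via the growth algorithm and notes that the branchpoint of leaves $1$ and $2$ never moves as buds are added, whereas you iterate the delete-and-prune operation down to $\CTCS(2)$ and check that the MRCA of $1$ and $2$ survives each prune with its split height intact. Both routes reduce, via exchangeability and the consistency coupling, to the observation that $B_2\sim\Expo(h_1)=\Expo(1)$.
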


\begin{proof}
  By  exchangeability, $B_n$ has the same distribution as the height of
  the branchpoint between the paths to leaves 1 and 2 in $\CTCS(n)$.
If we consider the growth process given by the growth algorithm
in \refT{T:growth}, then this branchpoint remains the same in $\CTCS(n)$ for all $n\ge2$.
(Leaves may move to higher positions, but branchpoints will not
move.)
Hence, $B_n\eqd B_2$, which by definition has the distribution
$\Expo(h_1)=\Expo(1)$. 
\end{proof}

\begin{remark}
Note that the growth algorithm is for the continuous-time $\CTCS(n)$ only, since
it depends on  the lengths of the edges.
Kolmogorov's extension theorem applies to $\DTCS(n)$ too and yields a
consistent sequence 
$(\DTCS(n), n = 1,2,3,\ldots)$, but this is of less interest since both
leaves and internal nodes move towards infinity as new leaves are added,
while for $\CTCS(n)$, all internal vertices (branchpoints) remain at the
same height when new leaves are added, as seen in Corollary \ref{CP:split}. 
 This happens because in the continuous-time model there is an offsetting feature, that
 the initial splitting rate $h_{n-1}$ is increasing with $n$,
which remarkably compensates exactly  in \refC{CP:split}
\end{remark}

\begin{remark}\label{RCTCSinfty}
Using the growth process $(\CTCS(n), n = 1,2,\ldots)$, we can define 
a limiting object $\CTCS(\infty)$ as the union $\bigcup_{n=1}^\infty\CTCS(n)$,
suitably interpreted. This is perhaps best done with the version in
\refR{R:growth2} with lines to $\infty$.  In that version,  by 
ignoring the buds (which may move when adding a new bud)
and considering the lines only, the lines
form a type of tree structure that grows with $n$ by adding new lines to
$\infty$ at random branchpoints. 

We can regard $\CTCS(\infty)$ as a (non-compact) real tree -- see e.g. \cite{evans}.
Note that this is not  the usual kind of ``locally finite" infinite tree%
\footnote{Such as a supercritical Galton-Watson tree.},
 because a realization has a countable infinite dense set of branchpoints.
We will not study this limit object further; instead we consider a different (though conceptually equivalent) formalization in
\refS{sec:exch}.

Here is another viewpoint on the existence of the limit object.
For any given buds $i$ and $j$ with $i<j$, the branchpoint $B_{ij}$ 
between the paths from the root to $i$ and $j$ in $\CTCS(n)$ is the same for
all $n\ge j$ (its height is $\Expo(1)$ by \refC{CP:split}). 
In the growth process above,
if we consider only the instances where we
happen to move past $B_{ij}$, and record whether we turn towards $i$ or towards $j$,
then this process can be modelled by a P\'{o}lya urn; consequently, 
the proportion of leaves in each branch converges a.s.\ to a random non-zero
limit.
\end{remark}

\subsection{Proof of the consistency property and growth  algorithm}
\label{sec:ghost}

\begin{proof}[Proof of \refT{T:consistent}]
It suffices to consider $\CTCS(n)$, since the result for $\DTCS(n)$ then
follows by ignoring edge-lengths.

Recall from \eqref{ok1} that
the rate at which a clade of size $m$ splits into two clades of sizes $i$
and $m-i$ is $\xq(m,i)$ given by \eqref{ok1}.

Consider $\CTCS(n+1)$, but kill leaf $n+1$ and replace it with an
invisible ghost.
Consider a clade in the tree
with $m$ visible elements plus the ghost. This clade really
has $m+1$ elements and thus splits with rate $h_m$ in $\CTCS(n+1)$, but the
two cases when only the ghost is split off from the rest are invisible.
A visible split into subsets with $j$ and $m-j$ visible elements may have
the ghost in either of the two, and so,
taking into account the probability that the ghost appears in the proper subclade,
the rate is by \eqref{ok1}
\begin{align}\label{f1}
&  \frac{j+1}{m+1}\xq(m+1,j+1) + \frac{m+1-j}{m+1}\xq(m+1,j)
\\&\notag\qquad
=\frac{j+1}{2(j+1)(m-j)}+\frac{m+1-j}{2j(m+1-j)}
=\frac{1}{2(m-j)}+\frac{1}{2j}
= \xq(m,i).
\end{align}
In other words, the ghost does not affect the visible splitting rates.
Hence, if we delete and prune leaf $n+1$ from $\CTCS(n+1)$, we obtain
$\CTCS(n)$, which proves Theorem \ref{T:consistent}.
\end{proof}

\begin{proof}[Proof of \refT{T:growth}]
We continue to consider $\CTCS(n+1)$ with leaf $n+1$ replaced by an
invisible ghost.
In the argument above, we see from the calculation in \eqref{f1}
that if a clade containing the ghost and $m$ visible leaves splits
into new clades of visible sizes $j$ and $m-j$, then
the ghost will be in the left clade, of size $j$, with probability
\begin{align}
 \frac{1/(2(m-j))}{\xq(m,j)}=\frac{j}{m}.
\end{align}
In other words, the ghost moves as if it accompanies a uniformly chosen
visible leaf in the clade. Note also that when the ghost belongs to a clade with
$m$ visible elements, it splits off on its own at the rate
\begin{align}
  \frac{2}{m+1}\xq(m+1,1)=\frac{1}m.
\end{align}
(This follows also because the splitting rate in
$\CTCS(n+1)$ is $h_m$, of which the visible splits have rate $h_{m-1}$;
hence the rate of an invisible split is $h_m-h_{m-1}$.)
This means that given $\CTCS(n)$, the life of the ghost can (up to identity in
distribution) be described by: Choose a leaf in $\CTCS(n)$ uniformly at
random, and follow the branch towards it. With rate
$1/\text{(current size of the clade (excluding the ghost))}$, branch off alone;
if the ghost reaches the chosen leaf, continue together with it and branch
off from it with the same rate (now 1).

We may thus construct $\CTCS(n+1)$ from $\CTCS(n)$ by the procedure just
described, but giving life to the ghost as leaf $n+1$.
This gives precisely the growth algorithm in \refT{T:growth}
(or \refR{R:growth2}).
\end{proof}

\begin{remark}[Alternative proofs]
As said above, \refT{T:consistent} follows also from general results in 
\cite{haas-pitman}, but we do not know any analog of \refT{T:growth} in
the generality studied there.
Before finding the rather ``conceptual" proofs above,  we found a more pedestrian  argument based on explicitly describing the joint distribution
of $(\CTCS(n+1),\CTCS(n))$.  
That argument is given in  \refApp{sec:proofCP}.
There is also a direct (not using the consistency theorem) proof of the branchpoint result  (\refC{CP:split})  
via stochastic calculus -- see \refApp{sec:Exp1}.
Another discussion of exchangeability and consistency of random tree models can be found in \cite{hollering}
but we do not see any direct application to our model.
\end{remark}

\section{Leaf height and the harmonic descent chain}  
\label{sec:height}
\subsection{Leaf height}
\label{sec:LH}
Before continuing to study a formalization of the limit process $\CTCS(\infty)$  and its quantitative properties (Section \ref{sec:paintbox}),
let us describe some relevant quantitative work on another aspect of the model, which is {\em leaf height}.
We let $D_n$ be the height of a uniform random leaf $\ell$ in $\CTCS(n)$,
and let $L_n$ be the height of a uniform random leaf $\ell$ in $\DTCS(n)$.
Equivalently, recalling the relation between $\DTCS(n)$ and $\CTCS(n)$,
$D_n$ is the total length of the path from the root to $\ell$ in
$\CTCS(n)$, while 
$L_n$ is the \emph{hop-height}, i.e., 
the number of edges on the path, 
in either of $\DTCS(n)$ or $\CTCS(n)$.

Recall that in the unordered versions (which we normally use), $D_n$ and
$L_n$ can just as well be defined by taking the path to a fixed leaf
$\ell\in[n]$, for example $\ell=1$.

The limit behavior of both $D_n$ and $L_n$ is studied in great detail in  \cite{beta1}, though here we consider only $D_n$. 
It is easy to see that $t_n := \Ex[D_n]$ satisfies the recurrence
\begin{equation}
t_n =    \sfrac{1}{h_{n-1}} \Bigpar{1 + \sum_{i=1}^{n-1} \sfrac{t_i}{n-i} }; \ n \ge 2
\label{tn2}
\end{equation}
with $t_1 = 0$.
One can see the first order result $\Ex[D_n] \sim  \sfrac{6}{\pi^2}\log n$ heuristically by plugging $c \log n$ into the recursion and taking the natural first-order
approximation to the right side; the constant $c$  emerges as the inverse of the constant
\begin{equation}
\int_0^1\sfrac{\log(1/x)}{1-x}\dd x=\zeta(2)=\sfrac{\pi^2}{6}
\label{zeta2}
\end{equation}
and this heuristic goes back to \cite{me_clad}.
It has recently been proved 
{\cite[Theorems 1.1 and 1.7]{beta1}}

\begin{align}
\mathbb E[D_n] &= \tfrac{1}{\zeta(2)} \log n + O(1), \label{Dn0}\\
\var(D_n)&=(1+o(1))\tfrac{2\zeta(3)}{\zeta^3(2)}\log n  \label{var0}
\end{align}
and the corresponding CLT holds for $D_n$.
These and related results (and analogs for $L_n$) are proved  in
\cite{beta1} by detailed analysis of recursions analogous to \eqref{tn2};
further results will be given in \cite{beta4}.
After the  preprint version of \cite{beta1} was posted, alternative proofs of the CLT have been announced: 
see \cite{iksanovCLT,kolesnik}.
Related weaker results about tree height, that is maximum leaf height, are given in \cite{beta2-arxiv,beta1}.

\subsection{The harmonic descent chain}
\label{sec:HD}
We can characterize $D_n$ in an alternate way, as follows.  In the discrete construction, 
the sequence of clade sizes along the path from the root to $\ell$ is the discrete-time Markov chain, starting in state $n$,  whose transition 
($m \to i)$ probabilities $q^*(m,i)$ 
are obtained by size-biasing the $q(m,\cdot)$ distribution; so
\begin{equation}
 q^*(m,i) :=   \sfrac{2i}{m} q(m,i) = \sfrac{1}{h_{m-1}} \cdot \sfrac{1}{m-i}, 
 \qquad 1 \le i \le m-1 , \ m \ge 2 
 \label{qstar}
 \end{equation}
from \eqref{01}.
Because the continuous-time CTCS process exits $m$ at rate $h_{m-1}$, the continuous-time process of clade sizes 
as one moves at speed $1$ along the path is the 
continuous-time Markov process on states $\{1,2,3,\ldots\}$ with transition rates
\begin{equation}
\lambda_{m,i} := \sfrac{1}{m-i}, \qquad 1 \le i \le m-1, \ m \ge 2
\label{lambda-rates}
\end{equation}
with state $1$ absorbing. 
So $D_n$ is the absorption time for this chain, started at state $n$.
Let us call this the 
(continuous-time)
{\em harmonic descent} (HD) chain.\footnote{{\em Descent} is a reminder that
  the chain is decreasing.  Despite its simple form, the HD chain has
  apparently never been studied before.}

The HD chain is relevant to the current article in two ways.
First,
there is a simple probabilistic heuristic for the behavior of the harmonic descent chain, 
leading to the approximation \eqref{approx} below.
Write $\bX = (X_t, t \ge 0)$ for the HD chain with rates \eqref{lambda-rates}, 
or $\bX^{(n)} = (X^{(n)}_t, t \ge 0)$ for this chain starting with $X^{(n)}_0 = n$.
The key idea is to study the process  $\log \bX = (\log X_t, \ t \ge 0)$.
By considering its transitions, 
one quickly sees that, for large $n$,  there should be a good approximation 
\begin{equation}
 \log X^{(n)}_t \approx \log n - Y_t    \mbox{ while } Y_t < \log n 
 \label{approx}
 \end{equation}
 where  $(Y_t, 0 \le t < \infty)$ is the subordinator with  {\em L\'{e}vy measure} 
  $\psi_\infty$ and corresponding $\sigma$-finite density $f_\infty$ on $(0,\infty)$ defined as
\begin{equation}
 \psi_\infty[a, \infty) :=  - \log (1 - e^{-a}); 
\quad  f_\infty(a) :=  \sfrac{e^{-a}}{1 - e^{-a}}, \quad
  \ 0 < a < \infty .
  \label{muinf}
  \end{equation} 
  Recall that a {\em subordinator} \cite{bertoin}
 is the continuous-time analog of the discrete-time process of partial sums of i.i.d.\ positive summands: informally
 \begin{align}
\Pr(Y_{t+dt}  - Y_{t} \in \ddx a) = f_\infty(a) \dd a\dd t . 
\end{align}
 Such a subordinator satisfies the law of large numbers
\begin{equation}
 t^{-1} Y_t \to \rho \qquad \mbox{ a.s.\ as }  t \to \infty
 \label{LLN}
 \end{equation}
where the limit is the mean
\begin{equation} 
\rho = \int_0^\infty \psi_\infty[a, \infty) \dd a 
=  \int_0^\infty -  \log (1 - e^{-a}) \dd a
= \pi^2/6  .
\label{psirho}
\end{equation}

So the approximation \eqref{approx} provides a heuristic explanation of why $\sfrac{D_n}{ \log n} \to 6/\pi^2$,
and by the CLT for subordinators one can derive a heuristic for the explicit form  \eqref{var0} of the variance. 
This method can, with some effort, be made into a proof of the CLT -- see \cite{beta2-arxiv}.
But instead of asymptotics of  $\CTCS(n)$, we shall show in Section \ref{sec:exch} that the subordinator arises {\em exactly} within the 
limit structure  $\CTCS(\infty)$.

 \subsection{The occupation measure}
 \label{sec:OP}
 Here is the second way in which the HD chain is relevant to this article.
 The chain describes the number of descendant leaves of a node, as one moves at speed $1$
along the path from the root to a uniform random leaf.
We study the ``occupation measure", that is
 \begin{equation}
 \mbox{
 $a(n,i) := $ probability that the chain started at state $n$ is ever in state $i$.
 }
 \label{def:ani}
 \end{equation}
 So $a(n,n) = a(n,1) = 1$.
To see  the relevance of $a(n,i)$ to the tree model,
we 
let $N_n(j)$ be the number of subtrees of $\CTCS(n)$ that have $j$ leaves;
thus, 
for $j\ge2$,  $N_n(j)$ is the number of internal nodes of $\CTCS(n)$ that
have exactly $j$ leaves as descendants. 
Then, conditioned on $\CTCS(n)$, the number of leaves that are in some subtree
with $i$ leaves is $iN_n(i)$, and thus the (conditional) probability that a
random leaf is in such a subtree is $iN_n(i)/n$. Taking the expectation we
find
\begin{align}\label{jup}
  a(n,i) = \frac{i\E[N_n(i)]}{n}
\end{align}
and, conversely,
 \begin{equation}\label{jun12}
\E[N_n(i)]
= n a(n,i)/i .
 \end{equation}
 It seems very intuitive (but not obvious at a rigorous level) that the limits $a(i) = \lim_{n \to \infty} a(n,i)$ exist.
Note that   $\sum_{i=2}^n a(n,i)/h_{i-1} $ is just the mean absorption time $\Ex[ D_n]$, 
 so (from \eqref{Dn0}) we anticipate that,
 assuming the limits exist, 
 \begin{align}
 \sum_{i=2}^n \sfrac{a(i)}{\log i }\sim \Ex [D_n] \sim (6/\pi^2) \log n \mbox{ as } n \to \infty.
\end{align}
 This in turn suggests
 \begin{align}
a(i) \sim \sfrac{6}{\pi^2} \sfrac{\log i}{i}  \mbox{ as } i \to \infty.
\end{align}
 However, there seems no intuitive reason to think there should be some simple formula for the limits $a(i)$.
 So the following result was surprising to us.
  \begin{Theorem} [Occupation measure]
  \label{T:alimit}
 For each $i = 2,3,\ldots$,
 \begin{align}
a(i) : = \lim_{n \to \infty} a(n,i) &= \frac{6 h_{i-1}}{\pi^2 (i-1)} .\label{talimit}
 \end{align}
 And $a(1) = 1$.
 \end{Theorem}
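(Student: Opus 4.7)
\emph{The plan.} Because the harmonic descent chain is strictly decreasing, it visits each state at most once, so first-step analysis applied to the unique jump that enters state $i$ gives, for $1\le i<n$,
\[
a(n,i) \;=\; \sum_{j=i+1}^{n} a(n,j)\, q^*(j,i) \;=\; \sum_{j=i+1}^{n} \frac{a(n,j)}{h_{j-1}(j-i)},
\]
together with $a(n,n)=1$; setting $i=1$ and using $a(n,1)=1$ yields in particular the normalization $\sum_{j=2}^{n} a(n,j)/(h_{j-1}(j-1)) = 1$. Granting for the moment existence of the limits $a(i):=\lim_{n\to\infty}a(n,i)$ and the passage to the limit under the sum, these become
\[
a(i) \;=\; \sum_{j=i+1}^{\infty}\frac{a(j)}{h_{j-1}(j-i)},\qquad 1 \;=\; \sum_{j=2}^{\infty}\frac{a(j)}{h_{j-1}(j-1)}.
\]

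\emph{Guess and verify.} I would try the ansatz $a(j)=c\,h_{j-1}/(j-1)$, which reduces the first equation to
\[
\frac{c\,h_{i-1}}{i-1} \;=\; c\sum_{j=i+1}^{\infty}\frac{1}{(j-1)(j-i)}.
\]
The partial-fraction identity $\frac{1}{(j-1)(j-i)}=\frac{1}{i-1}\bigl(\frac{1}{j-i}-\frac{1}{j-1}\bigr)$ turns the tail into a telescoping sum; its partial sums equal $\frac{1}{i-1}\bigl(h_{N-i}-h_{N-1}+h_{i-1}\bigr) \to \frac{h_{i-1}}{i-1}$ as $N\to\infty$, so the ansatz satisfies the recursion for every $c$. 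Substituting into the normalization then pins down $c$ via $1=c\sum_{j\ge 2}1/(j-1)^2=c\,\zeta(2)=c\pi^2/6$, yielding $c=6/\pi^2$ as claimed.

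\emph{The obstacle.} The technical heart of the argument is the convergence $a(n,i)\to a(i)$ together with the legitimacy of the interchange of limit and infinite sum. I would establish existence through the tree identity $a(n,i)=i\,\E[N_n(i)]/n$ combined with the coupled growth process $(\CTCS(n))_{n\ge1}$ of \refT{T:growth}: each growth step alters $N_n(\cdot)$ in a very localised way, so the increments $\E[N_{n+1}(i)-N_n(i)]$ can be computed explicitly and shown to converge as $n\to\infty$, whence $\E[N_n(i)]/n$ converges by Cesàro averaging. For the interchange of limit and sum, the subordinator approximation of \S\ref{sec:HD} supplies a uniform bound $a(n,j)=O(h_{j-1}/j)$ (via boundedness of the renewal density of the subordinator $Y$ with L\'evy measure $\psi_\infty$); this produces a summable majorant of order $1/(j(j-i))$ whose total mass is $h_i/i<\infty$, legitimising dominated convergence. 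A standard uniqueness argument for the linear recursion, anchored by the normalization $a(1)=1$, then singles out the value of $c$ obtained above.
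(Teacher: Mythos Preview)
Your guess-and-verify computation is correct and coincides with what the paper describes as the first of the three known proofs (the one in \cite{HDchain}): the first-step recursion, the telescoping identity $\sum_{j>i}\frac{1}{(j-1)(j-i)}=\frac{h_{i-1}}{i-1}$, and the normalisation via $\zeta(2)$ are exactly right. The proof the paper actually \emph{gives}, however, is entirely different. Instead of proving convergence of $a(n,i)$ and then solving the limiting recursion, it works inside $\CTCS(\infty)$ via the exact moment formula $\E[P_{t,1}^s]=e^{-t(\psi(s+1)-\psi(1))}$ of \refT{T:exact}. The integrated measure $\gU=\int_0^\infty\cL(P_{t,1})\,dt$ has Mellin transform $1/(\psi(s)-\psi(1))$, and a Mellin inversion (Lemma~\ref{LM}) yields the density behaviour $\gu(x)=\frac{6}{\pi^2 x}+O(x^{-s_1})$ near~$0$. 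Combined with the paintbox identity $e_n(j;t)=\binom nj\E[P_{t,1}^{j-1}(1-P_{t,1})^{n-j}]$ and a beta integral, this gives $e_n(j)/n\to\frac{6}{\pi^2}\frac{h_{j-1}}{j(j-1)}$ directly, together with a quantitative rate $O(n^{s_1-1})$. The Mellin route thus bypasses all three of the steps you flag as ``the technical heart''; what it buys is precisely not having to prove existence, uniform bounds, or uniqueness for the recursion.

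Those three steps are where your proposal remains incomplete, and your sketches do not close the gaps. The uniform bound $a(n,j)=O(h_{j-1}/j)$ that you want from the subordinator approximation~\eqref{approx} cannot be read off as stated: \eqref{approx} is presented in the paper only as a heuristic, and converting it into a uniform-in-$j$ occupation bound is real work (in \cite{HDchain} existence is handled by a coupling argument, not via the subordinator). Uniqueness is also not automatic: the recursion $a(i)=\sum_{j>i}a(j)/\bigl(h_{j-1}(j-i)\bigr)$ is scale-invariant, so the normalisation fixes only a one-parameter family, and you still must rule out solutions not proportional to $h_{j-1}/(j-1)$; the paper explicitly notes that \cite{HDchain} has to ``check that the solution is unique,'' so this is a genuine separate argument rather than a standard remark. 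Your growth-process idea for existence is the most promising of the three sketches, but is likewise only an outline. In short: the algebraic core is right and matches one of the known proofs, but the analytic steps you correctly identify as hard are indeed hard, and the paper's proof is organised precisely to avoid them.
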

 This is the starting point for our analysis of the  {\em fringe distribution} in Section \ref{sec:OPfringe}.
 We currently know 3 quite different proofs of Theorem \ref{T:alimit}.

\smallskip \noindent
{\bf 1.} One method \cite{HDchain} (straightforward in outline, though somewhat tedious in detail)\footnote{A simplification of that proof has been found by  Luca Pratelli and Pietro Rigo (personal communication).}   
is to first prove by coupling that the limits $a(i)$ exist. 
The limits must satisfy a certain infinite set of equations; the one solution $\frac{6 h_{i-1}}{\pi^2 (i-1)}$ was found by inspired guesswork.
  Then check that the solution is unique.

\smallskip \noindent
  {\bf 2.} Iksanov  \cite{iksanovHD} repeats his method for proving the CLT \cite{iksanovCLT} by exploiting the exact relationship with 
  regenerative composition structures, enabling  a shorter derivation of Theorem \ref{T:alimit} from known results in that theory.
  This methodology is clearly worth further consideration.

\smallskip \noindent
{\bf 3.} In Section \ref{sec:surprise} we give a third proof, illustrating how to exploit the exchangeable representation of $\CTCS(\infty)$.

\section{The exchangeable partitions representation}
\label{sec:exch}
In \refR{RCTCSinfty} we discussed briefly 
a limiting object $\CTCS(\infty)$, which formally is a real tree.
In this section
we will define and study another, related, limiting object,
which formally is a nested family $(\Pi(t))_{t\ge0}$ 
of partitions of $\bbN$.
This uses an existing formalism via  Kingman's theory of exchangeable
partitions; 
a standard reference is  \cite[Section 2.3]{bertoin}  
-- see also \cite{Bertoin-PTRF} and  \cite[Chapter 2]{Pitman}.
The key feature of this approach is Kingman's {\em paintbox theorem}, 
which is stated in our setting in \refT{T:paintbox} below.

The relation between trees and nested families of partitions
has been used at least since \cite{haas-miermont2004}.
For completeness, we develop it below  in detail for our case;
we refer also to \cite{haas-pitman} where this relation is studied
in a more general situation.
(See also Section \ref{sec:CRT} for further discussion.)
The idea is simple: Given a finite tree with edge-lengths and leaves
labelled $1,\dots,n$ 
we define a partition $\Pi(t)$ of $[n]$ for each $t\ge0$
by cutting the tree at time (=height) $t$; conversely, it is easy to see
that, provided there are no vertices with outdegree 1, 
the tree is determined by this family of partitions.
This extends to infinite trees, and for exchangeable infinite trees, 
such as $\CTCS(\infty)$, we obtain a family of exchangeable partitions and
can employ Kingman's theory.


\subsection{Exchangeable partitions}
\label{sec:paintbox}

Fix a level (time) $t\ge0$.
For each $n$, the clades of $\CTCS(n)$ at time $t$
define a partition $\Pi\nnn(t)$ of
$[n]:=\setn$.
If we represent the tree $\CTCS(n)$  as in \refR{R:infty}, with lines extending
to infinity from each node, then $\Pi\nnn(t)$ is the partition obtained
by cutting the tree $\CTCS(n)$ at level $t$;
that is, 
$i$ and $j$ are in the same part if and only if
the branchpoint separating the paths to leaves $i$ and $j$ has height $>t$. 

We use 
the consistent growth process to define $\CTCS(n)$ for all $n\ge1$, and then
these partitions $\Pi\nnn(t)$ are consistent
and define
a partition $\Pi(t)$ of $\bbN:=\set{1,2,\dots}$ into  clades at time $t$.
Explicitly,
$i$ and $j$ (with $i,j\in\bbN$)
are in the same part if and only if
the branchpoint separating the paths to leaves $i$ and $j$ has height $>t$,
in $\CTCS(n)$ for any $n\ge\max(i,j)$.
In other words, $\Pi(t)$ is the partition of $\bbN$ into
the clades defined by the infinite tree
$\CTCS(\infty)$.
Obviously,  $\Pi(0)$ is the trivial partition into a single class.

Because each $\CTCS(n)$ is exchangeable,  $\Pi(t)$ is an exchangeable random
partition of $\bbN$, so we can exploit the theory of exchangeable
partitions. 
Denote the clades at time $t$, that is the parts of $\Pi(t)$, by
$\Pi(t)_1,\Pi(t)_2,\dots$, enumerated in order of the least elements.
In particular, the clade of leaf 1 is $\Pi(t)_1$. 
The clades $\Pi(t)_\ell$ are thus subsets of $\bbN$, and the clades of 
$\CTCS(n)$ are the sets $\Pi(t)_\ell\cap[n]$ that are non-empty.

Writing $\left| \ \cdot \ \right|$ for cardinality, it is easy to show the following
(proofs of the results stated here are given in \refSS{SS:exch-pf}).
\begin{Lemma}
\label{Lpe1a}
Let $t>0$. Then, 
a.s., all clades $\Pi(t)_\ell$ are infinite, that is\/  
$|\Pi(t)_\ell|=\infty$ for every $\ell\ge1$.
\end{Lemma}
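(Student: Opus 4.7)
The plan is to reduce the claim to the fact that the harmonic descent absorption time $D_n$ tends to $\infty$ in probability as $n\to\infty$, and then invoke exchangeability.

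First I will observe that leaf $1$ is a.s.\ either a singleton or in an infinite part of $\Pi(t)$. Fix $k\ge 2$. By the exchangeability of $\Pi(t)$ under permutations fixing $1$, the events $\{\Pi(t)_1=\{1,i_2,\dots,i_k\}\}$ have the same probability $p_k$ for every choice of $\{i_2,\dots,i_k\}\subseteq\bbN\setminus\{1\}$. These events are pairwise disjoint and there are countably infinitely many of them, so $p_k=0$, giving $\Pr(|\Pi(t)_1|=k)=0$ for every finite $k\ge 2$.

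Next I will show $\Pr(\{1\}\in\Pi(t))=0$. By the consistency property (\refT{T:consistent}), for each $n$ the intersection $\Pi(t)_1\cap[n]$ is exactly the clade of leaf $1$ at time $t$ in $\CTCS(n)$, so $S_n:=|\Pi(t)_1\cap[n]|$ is non-decreasing in $n$ and
\[
\Pr\bigl(\{1\}\in\Pi(t)\bigr)=\Pr\Bigl(\bigcap_n\{S_n=1\}\Bigr)=\lim_{n\to\infty}\Pr(S_n=1).
\]
By the harmonic descent chain description of clade sizes along the path to leaf $1$ (\refS{sec:HD}), $\{S_n=1\}$ is the event that the HD chain started at $n$ has been absorbed by time $t$, i.e.\ $\{D_n\le t\}$. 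Using $\E[D_n]\sim(6/\pi^2)\log n$ from \eqref{Dn0} and $\var(D_n)=O(\log n)$ from \eqref{var0}, Chebyshev's inequality yields $\Pr(D_n\le t)\to 0$.

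Combining these two steps, leaf $1$ a.s.\ lies in an infinite part of $\Pi(t)$; applying the same reasoning to every leaf $i\in\bbN$ and taking the countable intersection of the resulting probability-one events shows that a.s.\ every $\Pi(t)_\ell$ is infinite. The main obstacle is the tail estimate $\Pr(D_n\le t)\to 0$: the Chebyshev argument above is clean but relies on \eqref{Dn0}--\eqref{var0} from \cite{beta1}. A self-contained alternative would be to extract the weaker statement $D_n\to\infty$ in probability from the subordinator heuristic $\log X^{(n)}_t\approx\log n-Y_t$ of \eqref{approx}, but making that approximation rigorous requires additional care.
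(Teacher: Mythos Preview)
Your argument is correct, and it differs genuinely from the paper's proof. The paper argues directly from the growth algorithm (\refT{T:growth}): as $n$ increases, (i) infinitely many new buds are added at heights $<t$, so there are infinitely many clades at level $t$; and (ii) once a clade $\Pi^{[n]}(t)_\ell$ is nonempty, the growth algorithm adds new leaves to it infinitely often. This is short and entirely self-contained within the paper.

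Your route instead uses two ingredients: the exchangeability of $\Pi(t)$ to kill all finite part-sizes $k\ge2$ by the standard pigeonhole-on-disjoint-events argument, and then the quantitative estimates \eqref{Dn0}--\eqref{var0} from \cite{beta1} together with Chebyshev to show $\Pr(D_n\le t)\to0$ and thereby rule out singletons. The exchangeability step is pleasant and general (it would work for any exchangeable partition). The cost is that your singleton step imports nontrivial asymptotics from another paper for what is ultimately a soft statement; the paper's growth-algorithm argument avoids this entirely. If you wanted a more self-contained alternative for the singleton step, note that the growth algorithm gives directly that, conditional on $\CTCS(n)$, the probability the new leaf $n+1$ lands in the clade of leaf~$1$ at level $t$ is bounded below (uniformly in the realization) by a positive quantity depending only on $t$ whenever leaf~$1$ has positive height; a Borel--Cantelli argument along those lines would remove the dependence on \cite{beta1}.
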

Write, for $\ell,n\ge1$,
\begin{align}\label{d1}
  \KX^{(n)}_{t,\ell}:=\bigabs{\Pi(t)_\ell\cap[n]};
\end{align}
the sequence $\KX^{(n)}_{t,1},\KX^{(n)}_{t,2},\dots$ is thus the  sequence of {\em sizes} of
the clades in $\CTCS(n)$, extended by 0's to an infinite sequence.
Lemma \ref{Lpe1a} shows that for every $t>0$,
$\KX^{(n)}_{t,\ell}\to\infty$ as \ntoo{} for every $\ell$.
By Kingman's fundamental result \cite[Theorem 2.1]{bertoin},
the asymptotic proportionate clade sizes,  
that is the limits
\begin{align}\label{d2}
  \XP_{t,\ell}:=\lim_\ntoo\frac{\KX^{(n)}_{t,\ell}}{n},
\end{align}
exist a.s.\ for every $\ell\ge1$, and
the random partition $\Pi(t)$ may be reconstructed (in distribution)
from the limits $(\XP_{t,\ell})_\ell$ by Kingman's paintbox construction,
which we state as the following theorem.
Obviously $\XP_{0,\ell}=\gd_{1\ell}$.

\begin{Theorem}\label{T:paintbox}
Let $t\ge0$.
  \begin{romenumerate}
  \item \label{T:paintbox1}
If\/ $t>0$, then 
a.s.\ each $\XP_{t,\ell}\in(0,1)$, and\/ $\sum_\ell \XP_{t,\ell}=1$.
\item\label{T:paintbox2}     
Given a realization of $(\XP_{t,\ell})_\ell$,
give each integer $i\in \bbN $ a random color $\ell$, 
with probability distribution $(\XP_{t,\ell})_\ell$,
independently for different $i$. These colors define a random partition of
$\bbN$, which has the same distribution as $\Pi(t)$.
  \end{romenumerate}
\end{Theorem}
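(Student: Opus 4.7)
The plan is to derive both parts of the theorem from Kingman's representation theorem for exchangeable random partitions of $\bbN$, the source cited just above the statement. The partition $\Pi(t)$ is exchangeable, since each $\Pi^{(n)}(t)$ is (because $\CTCS(n)$ is), so Kingman's theory applies to it.

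For part (i), I would invoke two standard consequences of Kingman's paintbox representation applied to $\Pi(t)$: (a) the \emph{dust} mass $1-\sum_\ell \XP_{t,\ell}$ equals the asymptotic density of singleton blocks of $\Pi(t)$; and (b) a.s.\ every block of an exchangeable partition is either a singleton or has strictly positive asymptotic frequency. Since \refL{Lpe1a} rules out singletons a.s., (a) forces $\sum_\ell \XP_{t,\ell}=1$ and (b) forces $\XP_{t,\ell}>0$ for every $\ell$. For the strict upper bound $\XP_{t,\ell}<1$, since the frequencies are already known to be positive and to sum to $1$, it suffices to show $|\Pi(t)|\ge 2$ a.s. By consistency (\refT{T:consistent}), $\Pi^{(n)}(t)$ is the restriction of $\Pi(t)$ to $[n]$, so
\begin{align*}
\bigset{|\Pi(t)|=1} = \bigcap_{n\ge 1}\bigset{\Pi^{(n)}(t)=\{[n]\}},
\end{align*}
a decreasing intersection. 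The initial clade of $\CTCS(n)$ splits at rate $h_{n-1}$, so $\Pr\bigpar{\Pi^{(n)}(t)=\{[n]\}}=e^{-h_{n-1}t}$; continuity from above together with $h_{n-1}\to\infty$ then yields $\Pr(|\Pi(t)|=1)=0$ for $t>0$.

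Part (ii) then follows at once from Kingman's paintbox theorem: given a realization of $(\XP_{t,\ell})_\ell$, which by part (i) consists of strictly positive frequencies summing to $1$ (no dust), the independent coloring described in (ii) is exactly the paintbox construction, so the resulting partition has the same distribution as $\Pi(t)$. The main potential obstacle lies in the appeals to consequences (a) and (b) of Kingman's representation; both follow by inspecting the paintbox (singletons correspond to the continuous part of the directing measure, infinite blocks of positive frequency to atoms), but they genuinely require the representation theorem rather than only the a.s.\ existence of the frequencies $\XP_{t,\ell}$.
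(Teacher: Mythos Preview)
Your proposal is correct and follows essentially the same approach as the paper: invoke Kingman's paintbox theorem for (ii), and use \refL{Lpe1a} (no singleton blocks) to force $\sum_\ell \XP_{t,\ell}=1$. Your explicit arguments for the strict inequalities $0<\XP_{t,\ell}<1$ are more detailed than the paper's terse proof; note, though, that your separate computation showing $|\Pi(t)|\ge 2$ is unnecessary, since \refL{Lpe1a} already asserts that $\Pi(t)_\ell$ is nonempty (indeed infinite) for every $\ell\ge1$.
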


Note that the paintbox construction in Theorem \ref{T:paintbox} starts with
the limits $\XP_{t,\ell}$, but gives as the result 
(in distribution)
$\Pi(t)$ and thus also 
the partition $\Pi\nnn(t)=\Pi(t)\cap[n]$ for every finite $n$.

Regarding $\CTCS(\infty)$ as a real
tree, the process $(\XP_{t,1}, t \ge 0)$ is
the relative size of the subclade at time $t$, as one moves at speed $1$
down the path to a uniform random leaf on the infinite boundary.

\subsection{The homogeneous fragmentation process}
\label{SS:exch-homo}

We have in \refSS{sec:paintbox} studied a fixed $t$; now consider 
the family of nested partitions $(\Pi(t))_{t\ge0}$. 
It is easy to see that this is a \emph{homogeneous fragmentation process}
as defined in \cite[Definition 3.2]{bertoin}.
To verify this, it suffices by \cite[Lemma 3.4]{bertoin}
to show that 
$(\Pi\nnn(t))_{t\ge0}$ is a homogeneous fragmentation process
for each $n\ge1$, 
which follows directly from the definition of $\CTCS(n)$.

For $n\ge1$, the family of nested partitions $(\Pi\nnn(t))_{t\ge0}$ 
determines when a clade
splits in $\CTCS(n)$, and how the clade splits, except for which subclade is
left and which is right.
Hence, the process $(\Pi\nnn(t))_{t\ge0}$ determines $\CTCS(n)$
up to the order of the children at each vertex;
conversely, $\CTCS(n)$ determines the partitions $\Pi\nnn(t)$ by definition.
Consequently, 
if we ignore the ordering of children in $\CTCS(n)$ (which in any case is
uniformly random), 
the 
process
$(\Pi(t))_{t\ge0}$ determines the entire growth process $(\CTCS(n))_{n\ge1}$ 
and conversely.

The conclusion 
is that 
we may regard the homogeneous fragmentation process 
$(\Pi(t))_{t\ge0}$ of partitions of $\bbN$ as another representation of the limit object
$\CTCS(\infty)$.
We continue to develop some properties of 
$(\Pi(t))_{t\ge0}$;
some of them will later be used to study $\CTCS(n)$ and $\DTCS(n)$.

\begin{remark}\label{R:Haas}
In this paper we start with the concrete definition of $\DTCS(n)$ and
$\CTCS(n)$ in  \refS{sec:tree}, 
and then find explicitly in the present section the corresponding
homogeneous fragmentation process $(\Pi(t))$.
An alternative approach, 
suggested to us by B\'{e}n\'{e}dicte Haas and 
using \cite{haas-pitman}, 
is to start with a general homogeneous fragmentation
process $(\Pi(t))_{t\ge0}$, which can be defined as in \cite{bertoin}
by an erosion coefficient $\bc$ and a  dislocation measure $\bnu$
(see \refSS{SS:jump}); then the restrictions of the partitions $\Pi(t)$
to $[n]$ correspond to a random tree $T_n$ in continuous time
and it is easy to see that the family $(T_n)_n$ of random trees is
consistent.\footnote{One of the results in \cite{haas-pitman} is the converse: every 
consistent family of random trees obtained by some splitting rule can be
obtained in this way.}
Moreover, if we choose the erosion coefficient 0 and the dislocation measure
defined by \eqref{ju8} in \refSS{SS:jump} below, 
then \cite[Theorem 1, (2)]{haas-pitman} 
shows (cf.\ the calculation in \eqref{ju9})  
that this family of random trees has the correct splitting
probabilities $q(m,i)$; furthermore, \eqref{ju4} below then shows that  
the splitting rate is $h_{m-1}$. 
Consequently, 
this constructs $\DTCS(n)$ and $\CTCS(n)$ 
starting from the correct homogeneous fragmentation process.

See also \refR{R:dis}.
\end{remark}

\subsection{Self-similarity}\label{SS:exch-self}
 As in \refSS{sec:paintbox}, consider the version of the tree $\CTCS(n)$
 with all branches extended up to $\infty$ (see Remark \ref{R:infty})
and cut it 
at a fixed height $t$, but now consider also the continuation to
higher levels;
that is, we consider the tree $\CTCS(n)$ restricted to times $u\ge t$, which
defines a forest $F_t\nn$.
The trees in the forest $F\nn_t$ then correspond to the clades at height $t$
in $\CTCS(n)$.

The roots are all at height $t$, but we may make an obvious time translation
so that all roots have height 0.

As $n$ grows, we have the following self-similar behaviour
as a consequence of the growth algorithm;
this can be seen as a consequence of the fact that the fragmentation process
$(\Pi(t))_{t\ge0}$ is homogeneous 
(see \refSS{SS:exch-homo} and \cite[p.~119]{bertoin}),
but we give also an elementary direct  proof in \refSS{SS:exch-pf}.

\begin{Theorem}\label{T:selfsim}
  Let $t\ge0$ be fixed and let $n$ grow from $1$ to $\infty$. 
At each increase of $n$, either one of the trees in $F\nn_t$ gets a new leaf,
or a new tree consisting only of a root is added to $F\nn_t$; in either case
all other trees in $F\nn_t$ remain unchanged.
Moreover, each tree in $F\nn_t$, considered only when it is born or
increases in size, grows as a copy of the process $\CTCS(n)$,
and different trees grow as  independent copies.
\end{Theorem}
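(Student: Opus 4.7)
The plan is to deduce Theorem~\ref{T:selfsim} directly from the growth algorithm (\refT{T:growth}), by classifying the effect of each transition $\CTCS(n) \to \CTCS(n+1)$ on the portion of the tree above level $t$.

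I would begin by classifying each growth step according to the height $h^\ast$ at which the new bud $n+1$ is attached---either the \stopp{} height (side-bud cases~\textbf{a} and~\textbf{c} of \refF{Fig:3b}) or the extension point $h_k + \Exp(1)$ when the walker reaches the target bud $k$ (case~\textbf{b}). If $h^\ast < t$, then using the extended representation of \refR{R:infty} the new bud's line to infinity creates a fresh singleton clade at time $t$, contributing exactly one new single-root tree to $F_t^{(n+1)}$ while leaving every pre-existing tree unchanged. If $h^\ast \ge t$, then $n+1$ lies in the unique clade $C$ at time $t$ along the ancestral path of the target bud, so exactly one tree $T_C \in F_t^{(n)}$ acquires a new leaf and every other tree is untouched. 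This proves the first assertion of the theorem.

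To verify the second assertion, I would argue that conditional on $h^\ast \ge t$ and on the target bud lying in a particular tree $T_C$ of current size $m$, the induced update $T_C \to T_C'$ is distributed as the growth step $\CTCS(m) \to \CTCS(m+1)$ of \refT{T:growth}. Three checks are needed: (i)~the target bud, uniform in $[n]$, is conditionally uniform among the $m$ buds of $T_C$ because the trees of $F_t^{(n)}$ partition the bud set; (ii)~conditional on entering $T_C$, above level $t$ the walker lies inside $C$ and its sub-clades, whose sizes in $\CTCS(n)$ agree with their sizes viewed inside $T_C$, so the \stopp{} rate $1/k$ per clade of size $k$ matches the rate used by the growth algorithm applied to $T_C$; (iii)~if the walker reaches the target bud, the $\Exp(1)$ extension is identical to case~\textbf{b} of \refT{T:growth}. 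The event ``no \stopp{} below $t$'' is measurable with respect to the exponential alarm clocks governing the walker below $t$, and these are independent of the clocks above $t$; hence conditioning does not distort the dynamics inside $T_C$. In the case $h^\ast < t$ the new singleton tree starts at size $1$, matching the start $\CTCS(1)$ of the growth process.

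For the third assertion (independence across trees) the cleanest route is to appeal to the branching property of the homogeneous fragmentation process $(\Pi(t))_{t \ge 0}$ recorded in \refSS{SS:exch-homo}: conditional on $\Pi(t)$, the restrictions of the process to distinct blocks evolve independently, and each such restriction is itself a fragmentation of the same type. Combined with the growth-algorithmic identification above, this yields mutually independent $\CTCS$ growth processes for the different trees. The main obstacle is verifying point~(ii) cleanly---namely, that conditioning on ``no \stopp{} below $t$'' alters neither the clade structure inside $T_C$ nor the walker's dynamics above $t$; this reduces to the memoryless property and mutual independence of the exponential clocks driving the \stopp{} events and the splits, after which the remainder is a routine comparison of the global growth algorithm on $\CTCS(n)$ with the local one on each $T_C$.
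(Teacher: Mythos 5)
Your proposal is correct and follows essentially the same route as the paper's proof: a case analysis of the growth algorithm according to whether the new bud attaches below or above level $t$, with the memoryless property of the exponential handling the branch-extension case, and with independence across trees coming from the disjointness of the fresh randomness (equivalently, the branching property of the homogeneous fragmentation, which the paper mentions as the alternative route). The one point to state explicitly (as the paper does) is that conditional uniformity of the target bud within $T_C$ survives the conditioning on ``no \stopp{} below $t$'' because all buds of $T_C$ share the same ancestral path below level $t$, so that event has the same probability for each of them.
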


\subsection{The subordinator within $\CTCS(\infty)$}\label{sec:sub}

Let us consider the clade containing a given (or random)
node and see how it develops
as time increases; by exchangeability, we may consider the clade containing 1.

For given $n$ the process
$(  \KX^{(n)}_{t,1} ,  t \ge 0)$ at \eqref{d1} of the clade size
is the harmonic descent (Section \ref{sec:HD}) chain 
 $(X^{[n]}_t, t \ge 0)$ 
started at state $n$.
We have described informally
the approximation \eqref{approx} of this $(  \KX^{(n)}_{t,1} ,  t \ge 0)$ 
by the subordinator 
 $(Y_t, 0 \le t < \infty)$ with  L\'{e}vy measure
  $\psi_\infty$ and corresponding $\sigma$-finite density $f_\infty$ on
  $(0,\infty)$ defined in \eqref{muinf}, which we for convenience repeat:
\begin{equation}
 \psi_\infty[a, \infty) :=  - \log (1 - e^{-a}); 
\quad  f_\infty(a) :=  \sfrac{e^{-a}}{1 - e^{-a}}, \quad
  \ 0 < a < \infty .
  \label{muinf3}
  \end{equation} 
The next theorem says that
this becomes exact in the $n \to \infty$ limit given  by \eqref{d2}.
We note that by  \cite[Theorem 3.2]{bertoin},
a.s.\ the limit $P_{t,1}$ in \eqref{d2} exists for all $t\ge0$
simultaneously.

\begin{Theorem}
\label{T:exact}
Define $Y_t := - \log  \XP_{t,1}$.
Then $(Y_t, 0 \le t < \infty)$ is the subordinator 
given by \eqref{muinf3}.
Moreover, for $t\ge0$ and complex $s$ with $\Re s>-1$,
\begin{align}\label{b5}
\Ex [\cXt^s] =\Ex [ e^{-s Y_t}]  
=e^{-t(\psi(s+1)-\psi(1))}
\end{align}
where $\psi(z):=\gG'(z)/\gG(z)$ 
is the digamma function. 
\end{Theorem}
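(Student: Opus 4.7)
The plan is to prove the theorem in three stages: first show $(Y_t)_{t\ge 0}$ is a subordinator using self-similarity; then compute $\Ex[\cXt^s]$ by analyzing the harmonic descent chain at finite $n$ and passing to the limit; finally, verify that the resulting Laplace exponent matches the one associated to the \Levy{} measure $\psi_\infty$. For Stage~1: $Y_0=0$ since $\XP_{0,1}=1$, and $t\mapsto Y_t$ is non-decreasing because the clade of leaf~$1$ can only shrink over time. To get stationary independent increments I would invoke \refT{T:selfsim}: above height $t$, the tree rooted at leaf~$1$'s clade grows as an independent copy of the whole process, and the asymptotic fraction of its leaves that lie in leaf~$1$'s further descendants at internal time $s$ is $\XP_{t+s,1}/\XP_{t,1}$. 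By self-similarity, this ratio has the distribution of $\XP_{s,1}$ and is independent of $(\XP_{u,1})_{u\le t}$. Hence $(Y_t)$ is a non-decreasing \Levy{} process, i.e., a subordinator.

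For Stage~2, the clade-size process $(\KX^{(n)}_{t,1})$ is the harmonic descent chain of \refS{sec:HD} started at $n$, with jump rate $1/(m-j)$ from $m$ to $j<m$. For $f(m)=m^s$ with $\Re s>-1$, its infinitesimal generator acts as
\begin{align*}
(\mathcal A f)(m) = \sum_{j=1}^{m-1}\frac{j^s-m^s}{m-j}
= -m^s\cdot\frac1m\sum_{j=1}^{m-1}\frac{1-(j/m)^s}{1-j/m},
\end{align*}
which is a Riemann sum converging to $-m^s(\psi(s+1)-\psi(1))$ by the classical integral representation $\int_0^1\frac{1-\lambda^s}{1-\lambda}\dd\lambda=\psi(s+1)-\psi(1)$. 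Combined with Dynkin's formula, the a.s.\ scaling $\KX^{(n)}_{t,1}/n\to\cXt$ from \refT{T:paintbox}, and suitable convergence arguments, this yields
\begin{align*}
\Ex[\cXt^s] = \lim_{n\to\infty}\Ex\bigsqpar{(\KX^{(n)}_{t,1}/n)^s} = e^{-t(\psi(s+1)-\psi(1))}.
\end{align*}

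For Stage~3, expanding $e^{-a}/(1-e^{-a})=\sum_{k\ge 1}e^{-ka}$ and integrating termwise gives
\begin{align*}
\int_0^\infty(1-e^{-sa})\frac{e^{-a}}{1-e^{-a}}\dd a
= \sum_{k=1}^\infty\Bigpar{\frac1k-\frac1{k+s}}
= \psi(s+1)-\psi(1),
\end{align*}
by the series representation of the digamma function. Hence the Laplace exponent computed in Stage~2 is exactly that of the subordinator with \Levy{} measure $\psi_\infty$, and \Levy--Khintchine uniqueness identifies $(Y_t)$ with this subordinator. The main technical obstacle will be Stage~2: justifying uniform convergence of the Riemann sum and the exchange of limits in $\Ex[\cXt^s]=\lim_n\Ex[(\KX^{(n)}_{t,1}/n)^s]$ requires care, especially for complex $s$ with $-1<\Re s<0$ where $m^s$ is unbounded as the chain approaches absorption in state $1$. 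A cleaner alternative, indicated in \refR{R:Haas}, is to identify the fragmentation's dislocation measure as $\tfrac12(\lambda^{-1}+(1-\lambda)^{-1})\dd\lambda$ (obtained as the $m\to\infty$ scaling limit of $\xq(m,i)$) and then read off the \Levy{} measure of the tagged-particle subordinator from the standard formula for homogeneous fragmentations in \cite{bertoin}.
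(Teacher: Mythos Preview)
Your Stage~1 and Stage~3 are essentially the paper's Step~1 and Step~3; the substantive difference is in Stage~2 versus the paper's Step~2. The paper does not attempt a generator/Riemann-sum limit through the harmonic descent chain. Instead it computes the \emph{integer} moments $\Ex[\cXt^k]$ exactly, with no limiting argument at all: via the paintbox construction one has $\Ex[\cXt^k]=\Pr(\CTCS(k+1)\text{ has no branchpoint at height}\le t)$, and then the growth algorithm gives this probability as $\prod_{j=1}^k e^{-t/j}=e^{-h_k t}$. Matching integer moments with those of $e^{-\cY_t}$ (a separate elementary computation) and invoking the method of moments on $[0,1]$ identifies the law for each $t$; since both sides are subordinators, this pins down the process. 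The extension to complex $s$ with $\Re s>-1$ then comes for free from the \Levy--Khintchine formula.

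What the paper's route buys is precisely the avoidance of the obstacle you flag: there is no need to control the Riemann-sum error $(\mathcal A f)(m)+m^s(\psi(s+1)-\psi(1))$ uniformly along the trajectory of $X^{(n)}_\cdot$, nor to justify $\Ex[(\KX^{(n)}_{t,1}/n)^s]\to\Ex[\cXt^s]$ for $\Re s\in(-1,0)$, nor to solve an approximate ODE. Your approach is in principle workable---for each fixed $t$ one has $\KX^{(n)}_{u,1}/n\to P_{u,1}>0$ a.s.\ for $u\in[0,t]$, so the Riemann sums are evaluated at large arguments---but making this uniform in $u$ and quantitative enough to close the ODE argument is genuine work that you have not supplied. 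The paper's trick of reducing to integer $s=k$ sidesteps all of it, since for integer $k$ the relevant quantity is an exact event probability in the finite tree $\CTCS(k+1)$.
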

We prove Theorem \ref{T:exact} in Section \ref{SS:exch-pf} 
by calculating moments.

As noted after Theorem \ref{T:paintbox}, for finite $n$  the partition of $\CTCS(n)$ 
into clades at a fixed level $t$ can be also described by the limits $P_{t,\ell}$. 
Similarly, considering only $\ell=1$ but all $t\ge0$
simultaneously, the harmonic descent chain describing the size of the first
clade can be reconstructed (in distribution) for any finite $n$ 
from the process $\XP_{t,1}$,
or equivalently from the subordinator $Y_t$, as shown by
Iksanov  \cite{iksanovHD,iksanovCLT}.

\subsection{Jump rates and dislocation measure}\label{SS:jump}
The general theory of homogeneous fragmentation processes in 
\cite[Sections 3.1--3.2]{bertoin} 
includes several  further 
objects associated with such processes that can be used to study and characterize them. 
In this subsection we calculate the objects below
for the process $(\Pi(t))_{t\ge0}$.
The results of this subsection will not be used in the present paper, 
but the results are included both for possible future use and to illustrate more
aspects of the general theory that apply to our setting.

For $n\in\bbN\cup\set{\infty}$, let $\cP_n$ denote the set of partitions of
$[n]$, where $[\infty]:=\bbN$. The trivial partition into a single class is
denoted $\one_{[n]}$.
Let $\cP_n':=\cP_n\setminus\set\onen$.
We denote the parts of the partition $\pi$ by $\pi_1,\pi_2,\dots$, 
in order of their least elements; the number of parts is $|\pi|\ge1$.
Thus $\cP_n':=\set{\pi\in\cP_n:|\pi|\ge2}$.

\subsubsection{}
The \emph{jump rates} $q_\pi$ \cite[p.~121]{bertoin}
are defined for (finite) partitions 
$\pi\in\bigcup_{1\le n<\infty}\cP_n'$
as the jump rates from $\onen$ in the (Markov) process $\Pi\nnn(t)$.
They are thus equal to the rate that the initial clade $[n]$ splits in
$\CTCS(n)$ according to the partition $\pi$.
Hence, 
\begin{align}\label{ju1}
q_\pi=0 
\qquad\text{if $|\pi|\ge3$},  
\end{align}
while if $|\pi|=2$, then by \eqref{ok1}, noting that we now specify the
parts as subsets of $[n]$ (and not just their sizes as in \eqref{ok1}), 
and, on the other hand, 
that we ignore the left/right distinction which gives a factor 2,
\begin{align}\label{ju2}
  q_\pi= \frac{2}{\binom{n}{|\pi_1|}}
\xq(n,|\pi_1|)
=2\frac{|\pi_1|!\,|\pi_2|!}{n!}\frac{n}{2|\pi_1||\pi_2|}
=\frac{(|\pi_1|-1)!\,(|\pi_2|-1)!}{(n-1)!}
.\end{align}

\subsubsection{}
The \emph{splitting rate} \cite[p.~122]{bertoin}
is a (possibly infinite) measure $\bmu$ on $\cP_\infty$
with $\bmu\set{\oneoo}=0$ characterized by
\begin{align}\label{ju3}
\bmu\bigset{\pi'\in\cP_\infty:\pi'|_{[n]}=\pi}
=q_\pi  
\end{align}
for every finite $n$ and every $\pi\in\cP_n'$.
It follows from \eqref{ju1} that $\bmu$ is supported on the set 
$\set{\pi\in\cP_\infty:|\pi|=2}$.
We note that 
\eqref{ju1}--\eqref{ju3} yield
\begin{align}\label{ju4}
\bmu\bigset{\pi'\in\cP_\infty:\pi'|_{[n]}\neq\onen}
=\sum_{\pi\in\cP_n'}q_\pi 
=\sum_{i=1}^{n-1} \xq(n,i)=h_{n-1}.
\end{align}
It follows that the total mass $\bmu(\cP_\infty')=\infty$.

\subsubsection{}
In general, the splitting rate can be 
decomposed as a sum of two measures
which are determined by the \emph{erosion coefficient} $\bc$ and the
\emph{dislocation measure} $\bnu$, respectively
\cite[Theorem 3.1 and p.~128]{bertoin}.
The erosion coefficient equals the mass $\bmu(\beps)$ of the partition
$\beps\in\cP_\infty$ with two blocks: $\set1$ and $\bbN\setminus\set1$.
For every $n\ge1$,
we have by \eqref{ju3} and \eqref{ju2}, 
with $\beps_n:=\beps|_{[n]}$, the partition of $[n]$ into $\set1$ and
$\set{2,\dots,n}$, 
\begin{align}
  \bc
= \bmu(\beps)
\le \bmu\bigset{\pi'\in\cP_\infty:\pi'|_{[n]}=\beps_n}
=q_{\beps_n}=\frac{1}{n-1}.
\end{align}
Thus, the erosion coefficient $\bc=0$.

\subsubsection{}
The dislocation measure $\bnu$ is a (possibly infinite) measure
on the space $\cPm$ of \emph{mass partitions}, where a mass partition $\bs$
is an infinite sequence $s_1\ge s_2\ge\dots \ge0$ such that
$\sum_{i=1}^\infty s_i\le1$ 
\cite[Definition 2.1]{bertoin}. 
The space $\cPm$ is a compact metric space,
see \cite{bertoin}.
Each mass partition $\bs$ defines a random partition of $\bbN$ by the
paintbox construction in \refT{T:paintbox}\ref{T:paintbox2}
(with obvious change of notation, and allowing for missing mass if
$\sum_is_i<1$, see \cite[Lemma 2.7]{bertoin}); the distribution of this
random partition is denoted $\brho_\bs$.
The dislocation measure $\bnu$ is characterized by, 
assuming for simplicity that $\bc=0$ as in our case, 
see \cite[pp.~126--128]{bertoin}, 
\begin{align}\label{ju6}
  \bmu=\int_{\cPm}\brho_\bs \dd\bnu(\bs).
\end{align}
Also, or as a consequence of \eqref{ju6} and $\bmu(\oneoo)=0$, 
$\bnu$ has no mass at the point $(1,0,0,\dots)\in\cPm$.
(Note that in \refT{T:paintbox}, we used the paintbox construction for a
fixed $t$; here we use it for the splitting rate $\bmu$, which can be seen
as a version for infinitesimally small $t$.)

It is easy to see that if $\bs$ has at least 3 non-zero terms, or if 
$\sum_i s_i<1$, then $\brho_\bs$ gives a positive probability to the set of
partitions with more than two parts; since $\bmu$ gives mass 0 to such
partitions, $\bnu$ is concentrated on the set of mass partitions
$\bs_x:=(x,1-x,0,\dots)$ for $x\in[\frac12,1)$. (We need $x\ge\frac12$
since $s_1\ge s_2$ is assumed.) Given a partition $\pi\in\cP_n$ with two
parts of sizes $i$ and $n-i$ (with $1\le i\le n-1$),
the paintbox construction using $\bs_x$ yields a probability,
using $\brho_{\bs_x}$ to denote also the induced probability distribution on
$\cP_n$, 
\begin{align}\label{ju7}
  \brho_{\bs_x}(\pi) = x^i(1-x)^{n-i}+x^{n-i}(1-x)^i.
\end{align}
We claim that $\bnu$ is the (infinite) measure on $\cPm$ obtained 
as the push-forward by the map $x\mapsto\bs_x$ of the measure
\begin{align}\label{ju8}
  \ddx\xbnu:=\frac{\ddx x}{x(1-x)}
\qquad \text{on } [\tfrac12,1)
.\end{align}
To verify this, it suffices
to calculate 
for a partition $\pi\in\cP_n$ as above, 
using \eqref{ju7}, \eqref{ju8}, and \eqref{ju2},
\begin{align}\label{ju9}
\int_{1/2}^1\brho_{\bs_x}(\pi) \dd\xbnu(x)&
=
\int_{1/2}^1\Bigpar{x^i(1-x)^{n-i}+x^{n-i}(1-x)^i}\frac{\ddx x}{x(1-x)}
\\\notag&
=
\intoi {x^i(1-x)^{n-i}}\frac{\ddx x}{x(1-x)}
=
\frac{\gG(i)\,\gG(n-i)}{\gG(n)}
=q_\pi
,\end{align}
which by \eqref{ju3} verifies \eqref{ju6}.

\begin{remark}\label{R:dis}
 The dislocation measure\footnote{In the symmetric version with $x\in(0,1)$.} 
  $\xbnu$ in \eqref{ju8} 
appears alternatively in 
the definition of $\DTCS(n)$ in \cite{me_clad}.
In fact, \cite[Section 4]{me_clad} considers first a general construction
of random binary splits. To split a clade with $n$ leaves, the leaves
are represented by i.i.d.\ uniformly distributed random points in $(0,1)$,
and then the unit interval is split at a random point $X$ with a given
density $f(x)$ in $(0,1)$; we condition on this giving a proper split.
The beta-splitting model is defined in \cite{me_clad} for $-1<\gb<\infty$ 
using this construction with the beta density $f(x)=c_\gb x^\gb(1-x)^\gb$;
for $-2<\gb\le-1$.
Here $f(x)$ is not a probability density, but the calculation
of splitting probabilities still makes  sense, and defines the model.
For $\gb=-1$, this calculation is just \eqref{ju9}.

As mentioned before, the framework of exchangeable partitions 
have been used by Haas et al \cite{haas-pitman,haas-miermont} 
in somewhat similar contexts -- 
see Section \ref{sec:CRT} for further discussion.
\end{remark}

\subsection{Proofs}\label{SS:exch-pf} 

\begin{proof}[Proof of Lemma \ref{Lpe1a}]
\label{sec:proofviaexch}
  It is easily seen from the
growth algorithm that a.s., as $n$ grows to $\infty$:
\begin{enumerate}
\item
Infinitely many buds of height $<t$ are added, and thus 
$\Pi(t)_\ell\neq\emptyset$ for every $\ell\ge1$, and
\item  
Once a clade $\Pi\nnn(t)_\ell$ is non-empty, new leaves will be added to it
an infinite number of times.
\end{enumerate}
The result follows. 
\end{proof}

\begin{proof}[Proof of \refT{T:paintbox}]
First, obviously 
$\XP_{t,\ell}\in\oi$, and $\sum_\ell \XP_{t,\ell}\le1$ by Fatou's lemma.
Part \ref{T:paintbox2} is Kingman's paintbox construction 
\cite[Theorem 12.1]{bertoin},  
stated for the special case when $\sum_\ell \XP_{t,\ell}=1$.
This holds a.s.\
since otherwise the general version
of the paintbox construction would imply that
$|\Pi(t)_\ell|=1$ for some $\ell$ 
\cite[Proposition 2.8(iii)]{bertoin},
which is ruled out by Lemma \ref{Lpe1a}.
\end{proof}

\begin{proof}[Proof of Theorem \ref{T:selfsim}]\label{PfT:selfsim}
  Consider the effect on the forest $F\nn_t$ of adding a new leaf by the
  growth algorithm.
We have the following cases:
\begin{romenumerate}
\item 
If 
the algorithm stops at height $u< t$,
then $\CTCS(n)$ gets a new leaf (bud) there, which means that $F\nn_t$ gets
a new tree consisting of a root only.
\item 
If the target leaf has height $\ge t$ and the algorithm does not stop before
reaching height $t$, then the algorithm will continue in the tree containing
the target exactly as it would if acting on this tree separately.
All other trees in $F\nn_t$ remain unchanged.
Note also that the probability of reaching height $t$ is the same for
all  target leaves in a given tree in $F\nn_t$;
hence the conditional distribution of the target leaf, given the tree in
$F\nn_t$ that it belongs to, is uniform.
\item 
If the target leaf has height $< t$ and the algorithm does not stop until
reaching the target, then the target leaf is extended into a branch of  $\Expo(1)$ 
length $L$ ending with a bud-pair.

If $u+L< t$, then the two buds in the pair define separate singleton trees in
$F\nni_t$, and thus the net effect is to add
a new tree consisting of a root only to $F\nn_t$.

On the other hand, if $u+L\ge t$, then the tree in $F\nn_t$ consisting of the
target leaf (only) becomes a tree with two leaves at the end of a branch of
length $L+u-t$. Since the exponential distribution has no memory, also this
branch length has $\Expo(1)$ (conditional) distribution, and thus this tree
has the distribution of $\CTCS(2)$.
\end{romenumerate}
All cases conform to the description in the statement.
\end{proof}

\begin{proof}[Proof of Theorem \ref{T:exact}]
This is, apart from the explicit formula \eqref{b5}, an instance of
\cite[Theorem 3.2]{bertoin}.
Nevertheless, we find it instructive to give an explicit proof, partly using
the same arguments as \cite{bertoin}.
We prove the theorem in 3 steps.

\medskip
\noindent
{\em Step 1. $Y_t$ is a subordinator.}
Recall that $\KX\nn_{t,1}$ is the size of the first clade of $\CTCS(n)$ at
time $t$.
Consider two fixed times $t$ and $t+h$, where $h>0$.
Then Theorem \ref{T:selfsim} and \eqref{d2} imply that a.s., as \ntoo,
\begin{align}\label{d3}
  \frac{\KX\nn_{t+h,1}}{\KX\nn_{t,1}}\to \XP'_{h,1},
\end{align}
where $\XP'_{h,1}$ is a copy of $\XP_{h,1}$ that is independent of $\XP_{t,1}$.
Consequently, a.s.
\begin{align}\label{d4}
  \frac{\KX\nn_{t+h,1}}{n}=
  \frac{\KX\nn_{t+h,1}}{\KX\nn_{t,1}}
\cdot
  \frac{\KX\nn_{t,1}}{n}
\to
\XP'_{h,1} \XP_{t,1}
\end{align}
and thus
\begin{align}\label{d5}
  \XP_{t+h,1}= \XP_{t,1} \XP'_{h,1}.
\end{align}
Hence, $Y_t:=-\log \XP_{t,1}$ is an increasing 
stochastic process with stationary independent increments,
i.e., a subordinator.
Note that $Y_t<\infty$ a.s.\ since $P_{t,1}>0$ by Theorem~\ref{T:paintbox}.

\medskip
\noindent
{\em Step 2. The L{\'e}vy measure is given by \eqref{muinf3}.}
In order to verify this, we calculate moments.
Let $k\ge0$.
By the paintbox construction 
in Theorem \ref{T:paintbox},
\begin{align}\label{bc1}
  \Pr \bigpar{\Pi(t)_1\cap[k+1]=[k+1]\mid (\XP_{t,\ell})_{\ell=1}^\infty}
=\suml\XP_{t,\ell}^{k+1}.
\end{align}
Furthermore,
also as a consequence of the paintbox construction, 
$\XP_{t,1}$ has the same distribution as a size-biased sample of 
$(\XP_{t,\ell})_{\ell=1}^\infty$
\cite[Proposition 2.8]{bertoin},
and thus
\cite[Corollary 2.4]{bertoin}
\begin{align}\label{bc2}
   \Ex [\XP_{t,1}^k]
=\Ex\Bigsqpar{ \suml\XP_{t,\ell}^{k+1}}.
\end{align}
Consequently, \eqref{bc2} and \eqref{bc1} 
together with the definition of $\Pi(t)_1$ in Section \ref{sec:paintbox}
yield
\begin{align}\label{bc3}
 \Ex[ \XP_{t,1}^k]&
= \Pr \bigpar{\Pi(t)_1\cap[k+1]=[k+1]}
\\\notag&
= \Pr \bigpar{2,3,\dots,k+1\in\Pi(t)_1}
\\\notag&
=\Pr \bigpar{\CTCS(k+1) \text{ has no branchpoint with height }\le t}.
\end{align}
 The latter event occurs if and only if 
for each $j\le k$, in the inductive construction
by the growth algorithm
of $\CTCS(j+1)$ from $\CTCS(j)$, there is no stop at height $\le t$.
Since the subclade at the current position then has size $j$ for all times
$\le t$,
the probability of this happening at step $j$, given that it has happened so
far, is $\exp(-t/j)$. Consequently,
\begin{align}\label{a2}
  \Ex [\cXt^k]&
=\prod_{j=1}^ke^{-t/j}
=\exp\Bigpar{-t\sum_{j=1}^k \frac{1}{j}}
=e^{-h_k t}.
\end{align}

On the other hand, 
let $\cY_t$ be the subordinator with L{\'e}vy measure
given by \eqref{muinf3}. Then, by definition, 
for any real $s\ge0$,
\begin{align}\label{b3}
  \Ex [e^{-s\cY_t}] = \exp\Bigpar{-t\intoo (1-e^{-sx})f_\infty(x)\dd x}
= \exp\Bigpar{-t\intoo \frac{1-e^{-sx}}{1-e^{-x}}e^{-x}\dd x}
.\end{align}
In particular, if $s=k$ is an integer,
\begin{align}\label{b2}
\Ex [\bigpar{e^{-\cY_t}}^k]&
=  \Ex [e^{-k\cY_t}] 
= \exp\Bigpar{-t\intoo \frac{e^{-x}-e^{-(k+1)x}}{1-e^{-x}}\dd x}
= \exp\Bigpar{-t\intoo \sum_{j=1}^k e^{-jx}\dd x}
\\\notag&
=\exp\Bigpar{-t\sum_{j=1}^k\frac{1}{j}}
=e^{-h_k t}
.\end{align}
Consequently, for any $t\ge0$,
\eqref{a2} and \eqref{b2} show that $\Ex[ \cXt^k] =\Ex[ \bigpar{e^{-\cY_t}}^k]$
for all $k\ge1$, and thus, by the method of moments
\footnote{The random variables on both sides are bounded, with values in $\oi$.}
$\cXt\eqd e^{-\cY_t}$.
Thus $Y_t=-\log \XP_{t,1}\eqd \cY_t$.

This calculation is for a fixed $t\ge0$, but we know that the process
$(Y_t)$ is a subordinator, and thus the distribution of the entire process
is determined by, say, the distribution of $Y_1$.

\medskip
\noindent
{\em Step 3. The moment formula.}
 For any complex $s$ with $\Re s>-1$, we have
\cite[5.9.16]{NIST} 
(as is easily verified by standard arguments)
\begin{align}\label{b4}
\intoo \frac{1-e^{-sx}}{1-e^{-x}}e^{-x}\dd x
=
\intoo \frac{e^{-x}-e^{-(s+1)x}}{1-e^{-x}}\dd x
=\psi(s+1)-\psi(1)
,
\end{align}
generalizing the formula for integer $s$ in \eqref{b2}.
Hence, \eqref{b3} yields, for $t\ge0$ and $\Re s>-1$, 
\begin{align}
\label{b55}
\Ex[ \cXt^s] 
=\Ex [e^{-sY_t} ] 
=\Ex [e^{-s\cY_t} ] 
=e^{-t(\psi(s+1)-\psi(1))}
,\end{align}
which shows \eqref{b5}
and completes the proof of the theorem.
\end{proof}

\begin{remark}
  \cite[Theorem 3.2]{bertoin} gives a general formula relating the
moment and Laplace transform in \eqref{b5} to the dislocation measure in
\eqref{ju8}.
This can be used to show \eqref{b5}, although we preferred above a
calculation using the growth algorithm; conversely,  
using  \cite[footnote on p.~135]{bertoin}, this formula can be used to show
\eqref{ju8} from \eqref{b5}.
\end{remark}

 \section{The occupation measure and the fringe process}
 \label{sec:OPfringe}

  \subsection{The (limit) fringe tree}
\label{sec:fringe}
To be consistent with the {\em cladogram} representation described below, we work here in the discrete time $\DTCS(n)$ setting:
the definition \eqref{def:ani} of $a(n,i)$ is of course unchanged in discrete time.

  The motivation for Theorem \ref{T:alimit} involves the (asymptotic) {\em fringe tree}
  for the random tree model $\DTCS(n)$, that is 
   the $n \to \infty$ local weak limit of the tree relative to a typical leaf.
  See \cite{me-fringe,fringe,SJ385} for general accounts
  of fringe trees, which for us\footnote{The general accounts take limits relative to a random {\em node}, but for our leaf-labelled trees it is more natural to use leaves.  In the terminology of \cite{me-fringe,fringe} these are {\em extended} fringe trees.}
  are random locally finite trees with a
  distinguished leaf.
  It will be straightforward to verify that  the fringe tree can be described in terms of the limits $(a(i), i \ge 1)$ as follows.
  
  \begin{Theorem}\label{Tqup}
    \begin{alphenumerate}
  \item\label{Tqup1}
 The sequence of clade sizes as one moves away from the distinguished leaf is the discrete time ``reverse HD" Markov chain started at state~$1$,  whose ``upward" 
transition probabilities $  q^\uparrow(i,j) $ are given by
\begin{align}\label{qup1}
\qup(i,j)=  \frac{a(j)}{a(i)}q^*(j,i), 
\end{align}
which, from the explicit formula \eqref{talimit} for $a(i)$, becomes
\begin{align}\label{qup2}
q^\uparrow (1,j) &= 6 \pi^{-2}     \sfrac{1}{(j-1)(j-1)}, \qquad j \ge 2 \\
q^\uparrow (i,j) &= \sfrac{i-1}{(j-1)(j-i)h_{i-1}}, \qquad 2 \le i < j . 
\label{qup3}
\end{align}
\item \label{Tqup2}
 At each such upward step $i \to j$, there is the  sibling clade of size $j-i$, and
 this clade is distributed as $\DTCS(j-i)$,
 independently for each step.
This sibling clade is randomly on the left or right side.     
\end{alphenumerate}
  \end{Theorem}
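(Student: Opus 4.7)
The plan is to identify the fringe distribution by recognising the ancestor chain of a uniform random leaf as the time-reversal of the forward HD chain. For each $n$, the clade sizes along the path from the root to a uniform random leaf of $\DTCS(n)$ form the discrete HD chain started at $n$ with one-step transitions $q^*(j,i)$, and the sequence of clade sizes read outward from the leaf is the time-reversal of this chain. The proof splits naturally into (a) identifying the reverse transitions in the $n \to \infty$ limit, and (b) identifying the sibling subtrees attached at each ancestor, together with a limit statement.

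For part \ref{Tqup1}, I would begin with the first-entrance identity
\begin{align*}
a(n,i) = \sum_{j = i+1}^{n} a(n,j)\, q^*(j,i), \qquad 1 \le i < n,
\end{align*}
expressing that the forward chain reaches state $i$ through some immediate predecessor $j > i$. A direct calculation, partitioning the event $\{Y_0 = 1, Y_1 = i_1, \dots, Y_m = i, Y_{m+1} = j\}$ where $Y$ denotes the reverse chain, shows that the conditional probability that the predecessor equals $j$, given the full past of the reverse chain up to $i$, equals $a(n,j) q^*(j,i) / a(n,i)$, depending only on $(i,j)$. Hence the reverse chain is Markov with these transitions. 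Letting $n \to \infty$ and invoking Theorem~\ref{T:alimit} yields the limit transitions $\qup(i,j) = a(j)\,q^*(j,i)/a(i)$ of \eqref{qup1}, and the explicit forms \eqref{qup2}--\eqref{qup3} follow by substituting \eqref{talimit} and \eqref{qstar}.

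For part \ref{Tqup2}, I would invoke the recursive definition of $\DTCS$: whenever a clade of size $m$ splits into subclades of sizes $i$ and $m-i$, the two subclades evolve as conditionally independent copies of $\DTCS(i)$ and $\DTCS(m-i)$. Applied at each split along the ancestor path of the distinguished leaf, this shows that conditionally on the ancestor clade sizes, the sibling subtree branching off at the $k$-th ancestor has size equal to the corresponding increment of the reverse chain and is distributed as $\DTCS$ of that size, independently across $k$; the left/right placement is an independent fair coin by the symmetry of the unordered model. These statements hold exactly for every finite $n$ and pass to the $n \to \infty$ limit without modification.

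The main obstacle is justifying the local weak limit: for any fixed depth $K$, the joint law of the first $K$ ancestor clade sizes and attached sibling subtrees in $\DTCS(n)$ must converge to that of the claimed infinite fringe object. The one-step convergence $a(n,j)q^*(j,i)/a(n,i) \to \qup(i,j)$ is immediate from Theorem~\ref{T:alimit}; the remaining point is that the limit chain is honest, i.e., $\sum_{j > i}\qup(i,j) = 1$ for every $i$, so no mass escapes to infinity and the truncation $Y_m \le n$ becomes negligible. This can be verified directly using $a(j)q^*(j,i) = 6/(\pi^2(j-1)(j-i))$ and the partial fraction $\tfrac{1}{(j-1)(j-i)} = \tfrac{1}{i-1}\bigl[\tfrac{1}{j-i} - \tfrac{1}{j-1}\bigr]$, which telescopes to $h_{i-1}/(i-1)$ and yields $\sum_{j > i}\qup(i,j) = 1$ after multiplication by $6/(\pi^2 a(i))$.
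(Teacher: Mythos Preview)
Your proposal is correct and follows essentially the same route as the paper: both arguments compute the probability that the forward HD chain started at $n$ passes through a prescribed sequence $k_\ell>\dots>k_1=1$, read off the finite-$n$ reverse transitions $\qup_n(i,j)=a(n,j)q^*(j,i)/a(n,i)$, and then let $n\to\infty$ using Theorem~\ref{T:alimit}; the paper dispatches part~\ref{Tqup2} with the single word ``Obvious'', which matches your recursive-independence argument. Your explicit honesty check $\sum_{j>i}\qup(i,j)=1$ via the partial fraction $\tfrac{1}{(j-1)(j-i)}=\tfrac{1}{i-1}\bigl(\tfrac{1}{j-i}-\tfrac{1}{j-1}\bigr)$ is exactly the telescoping computation the paper records just before the theorem statement, and your point that this rules out mass escaping to infinity (so the finite-depth joint laws converge) is a detail the paper leaves implicit.
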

One can check that \eqref{qup3}  is a {\em probability} distribution by observing
\begin{align}
\sum_{j>i} \sfrac{1}{(j-1)(j-i)} 
= \sum_{j>i}\sfrac{1}{i-1} (\sfrac{1}{j-i} - \sfrac{1}{j-1})
= \sfrac{ h_{i-1}}{i-1} .
\end{align}

\begin{proof}
\ref{Tqup1}:
This is a simple exercise in reversing a Markov chain.
Let $1=k_1<k_2<\dots<k_\ell$ be a finite sequence of integers.
If $n\ge k_\ell$, then the probability that the HD chain started at $n$ 
ends with $k_\ell, k_{\ell-1},\dots,k_1=1$ is, by the definition of
$a(n,i)$ and recalling the transition probabilities $q^*(m,i)$ 
of the HD chain in \eqref{qstar}, 
\begin{align}
  a(n,k_\ell)\prod_{i=1}^{\ell-1} q^*(k_{\ell+1-i},k_{\ell-i})
=\prod_{j=1}^{\ell-1}\frac{a(n,k_{j+1})}{a(n,k_j)}q^*(k_{j+1},k_j).
\end{align}
So the reverse chain is a Markov chain with transition
probabilities 
\begin{align}\label{qup7}
\qup_n(i,j)=
  \frac{a(n,j)}{a(n,i)}q^*(j,i), 
\qquad i<j\le n.
\end{align}
Taking the limit as \ntoo, we obtain by \eqref{talimit} and \eqref{qstar}
the
transition probabilities \eqref{qup1}--\eqref{qup3}.

\ref{Tqup2}: Obvious.
\end{proof}
\begin{Remark}
  Using \eqref{qstar}, we can also write \eqref{qup7} as
\begin{align}\label{qup8}
i^{-1} a(n,i)   q^\uparrow(i,j) 
= 2j^{-1} a(n,j) q(j,i)
= j^{-1} a(n,j) (q(j,i) + q(j,j-i)) 
\end{align}
where the two sides both are
\begin{align}
 n^{-1} \Ex [\mbox{number of splits $j \to (i,j-i)$ or $(j-i,i)$ in $\DTCS(n)$} ],
\end{align}
calculated  in the two different directions.
\end{Remark}

\begin{figure}
\setlength{\unitlength}{0.087in}
\begin{picture}(80,50)(8,-30)
\multiput(1,0)(1,0){77}{\circle*{0.3}}
\put(77,0){\line(0,1){1}}
\put(76,0){\line(0,1){1}}
\put(76,1){\line(1,0){1}}
\put(75,0){\line(0,1){2}}
\put(75,2){\line(1,0){1.5}}
\put(76.5,1){\line(0,1){1}}

\put(74,0){\line(0,1){1}}
\put(73,0){\line(0,1){1}}
\put(73,1){\line(1,0){1}}
\put(72,0){\line(0,1){2}}
\put(72,2){\line(1,0){1.5}}
\put(73.5,1){\line(0,1){1}}

\put(75.5,2){\line(0,1){1}}
\put(72.5,2){\line(0,1){1}}
\put(72.5,3){\line(1,0){3}}
\put(73.5,3){\line(0,1){1}}
\put(71,0){\line(0,1){4}}
\put(71,4){\line(1,0){2.5}}

\put(70,0){\line(0,1){1}}
\put(69,0){\line(0,1){1}}
\put(69,1){\line(1,0){1}}

\put(68,0){\line(0,1){1}}
\put(67,0){\line(0,1){1}}
\put(67,1){\line(1,0){1}}

\put(69.5,1){\line(0,1){1}}
\put(67.5,1){\line(0,1){1}}
\put(67.5,2){\line(1,0){2}}

\put(66,0){\line(0,1){3}}
\put(68.5,2){\line(0,1){1}}
\put(66,3){\line(1,0){2.5}}
\put(65,0){\line(0,1){4}}
\put(68,3){\line(0,1){1}}
\put(65,4){\line(1,0){3}}

\put(66.5,4){\line(0,1){1}}
\put(71.75,4){\line(0,1){1}}
\put(66.5,5){\line(1,0){5.25}}
\put(68.5,5){\line(0,1){1}}

\put(64,0){\line(0,1){6}}
\put(64.0,6){\line(1,0){4.5}}
\put(65.5,6){\line(0,1){1}}
\put(63,0){\line(0,1){7}}
\put(63,7){\line(1,0){2.5}}
\put(64.25,7){\line(0,1){1}}

\put(62,0){\line(0,1){1}}
\put(61,0){\line(0,1){1}}
\put(61,1){\line(1,0){1}}

\put(61.5,1){\line(0,1){1}}
\put(60,0){\line(0,1){2}}
\put(60,2){\line(1,0){1.5}}
\put(60.75,2){\line(0,1){1}}
\put(59,0){\line(0,1){3}}
\put(59,3){\line(1,0){1.75}}
\put(59.75,3){\line(0,1){1}}

\put(58,0){\line(0,1){1}}
\put(57,0){\line(0,1){1}}
\put(57,1){\line(1,0){1}}

\put(57.5,1){\line(0,1){3}}
\put(57.5,4){\line(1,0){2.25}}
\put(58.25,4){\line(0,1){1}}

\put(54,0){\line(0,1){2}}
\put(55.5,1){\line(0,1){1}}
\put(54,2){\line(1,0){1.5}}
\put(53,0){\line(0,1){3}}
\put(54.5,2){\line(0,1){1}}
\put(53,3){\line(1,0){1.5}}
\put(53.75,3){\line(0,1){2}}
\put(53.75,5){\line(1,0){4.5}}
\put(55.75,5){\line(0,1){3}}
\put(55.75,8){\line(1,0){8.5}}

\put(56,0){\line(0,1){1}}
\put(55,0){\line(0,1){1}}
\put(55,1){\line(1,0){1}}

\put(52,0){\line(0,1){1}}
\put(51,0){\line(0,1){1}}
\put(51,1){\line(1,0){1}}
\put(51.5,1){\line(0,1){8}}
\put(59.5,8){\line(0,1){1}}
\put(51.5,9){\line(1,0){8}}

\put(50,0){\line(0,1){1}}
\put(49,0){\line(0,1){1}}
\put(49,1){\line(1,0){1}}

\put(48,0){\line(0,1){2}}
\put(49.5,1){\line(0,1){1}}
\put(48,2){\line(1,0){1.5}}

\put(47,0){\line(0,1){1}}
\put(46,0){\line(0,1){1}}
\put(46,1){\line(1,0){1}}
\put(46.5,1){\line(0,1){2}}
\put(48.5,2){\line(0,1){1}}
\put(46.5,3){\line(1,0){2}}

\put(45,0){\line(0,1){4}}
\put(47.25,3){\line(0,1){1}}
\put(45,4){\line(1,0){2.25}}

\put(44,0){\line(0,1){1}}
\put(43,0){\line(0,1){1}}
\put(43,1){\line(1,0){1}}

\put(43.5,1){\line(0,1){4}}
\put(46,4){\line(0,1){1}}

\put(43.5,5){\line(1,0){2.5}}
\put(44.5,5){\line(0,1){5}}
\put(54.5,9){\line(0,1){1}}
\put(44.5,10){\line(1,0){10}}

\put(42,0){\line(0,1){1}}
\put(41,0){\line(0,1){1}}
\put(41,1){\line(1,0){1}}

\put(41.5,1){\line(0,1){1}}
\put(40,0){\line(0,1){2}}
\put(40,2){\line(1,0){1.5}}
\put(40.75,2){\line(0,1){1}}
\put(39,0){\line(0,1){3}}
\put(39,3){\line(1,0){1.75}}
\put(39.75,3){\line(0,1){1}}
\put(38,0){\line(0,1){4}}

\put(38,4){\line(1,0){1.75}}
\put(38.75,4){\line(0,1){7}}
\put(48.5,10){\line(0,1){1}}
\put(38.75,11){\line(1,0){9.75}}

\put(37,0){\line(0,1){12}}
\put(43,11){\line(0,1){1}}
\put(37,12){\line(1,0){6}}

\put(36,0){\line(0,1){1}}
\put(35,0){\line(0,1){1}}
\put(35,1){\line(1,0){1}}
\put(35.5,1){\line(0,1){1}}
\put(34,0){\line(0,1){2}}
\put(34,2){\line(1,0){1.5}}
\put(33,0){\line(0,1){1}}
\put(32,0){\line(0,1){1}}
\put(32,1){\line(1,0){1}}
\put(32.5,1){\line(0,1){2}}
\put(34.5,2){\line(0,1){1}}
\put(32.5,3){\line(1,0){2}}

\put(31,0){\line(0,1){1}}
\put(30,0){\line(0,1){1}}
\put(30,1){\line(1,0){1}}
\put(30.5,1){\line(0,1){3}}
\put(33.5,3){\line(0,1){1}}
\put(30.5,4){\line(1,0){3}}

\put(29,0){\line(0,1){5}}
\put(32,4){\line(0,1){1}}
\put(29,5){\line(1,0){3}}

\put(28,0){\line(0,1){6}}
\put(30.5,5){\line(0,1){1}}
\put(28,6){\line(1,0){2.5}}

\put(28.75,6){\line(0,1){7}}
\put(39.5,12){\line(0,1){1}}
\put(28.75,13){\line(1,0){10.75}}

\put(27,0){\line(0,1){1}}
\put(26,0){\line(0,1){1}}
\put(26,1){\line(1,0){1}}
\put(26.5,1){\line(0,1){13}}
\put(33,13){\line(0,1){1}}
\put(26.5,14){\line(1,0){6.5}}

\put(25,0){\line(0,1){1}}
\put(24,0){\line(0,1){1}}
\put(24,1){\line(1,0){1}}
\put(24.5,1){\line(0,1){1}}
\put(23,0){\line(0,1){2}}
\put(23,2){\line(1,0){1.5}}

\put(23.5,2){\line(0,1){13}}
\put(29,14){\line(0,1){1}}
\put(23.5,15){\line(1,0){5.5}}

\put(22,0){\line(0,1){1}}
\put(21,0){\line(0,1){1}}
\put(21,1){\line(1,0){1}}

\put(21.5,1){\line(0,1){1}}
\put(20,0){\line(0,1){2}}
\put(20,2){\line(1,0){1.5}}
\put(20.75,2){\line(0,1){1}}
\put(19,0){\line(0,1){3}}
\put(19,3){\line(1,0){1.75}}
\put(19.75,3){\line(0,1){1}}

\put(18,0){\line(0,1){1}}
\put(17,0){\line(0,1){1}}
\put(17,1){\line(1,0){1}}

\put(17.5,1){\line(0,1){1}}
\put(16,0){\line(0,1){2}}
\put(16,2){\line(1,0){1.5}}
\put(16.75,2){\line(0,1){1}}
\put(15,0){\line(0,1){3}}
\put(15,3){\line(1,0){1.75}}
\put(15.75,3){\line(0,1){1}}

\put(15.75,4){\line(1,0){4}}
\put(17.75,4){\line(0,1){1}}

\put(14,0){\line(0,1){5}}
\put(14,5){\line(1,0){3.75}}
\put(15.5,5){\line(0,1){1}}

\put(13,0){\line(0,1){6}}
\put(13,6){\line(1,0){2.5}}
\put(14,6){\line(0,1){10}}
\put(26,15){\line(0,1){1}}
\put(14,16){\line(1,0){12}}

\put(12,0){\line(0,1){17}}
\put(19,16){\line(0,1){1}}
\put(12,17){\line(1,0){7}}

\put(11,0){\line(0,1){1}}
\put(10,0){\line(0,1){1}}
\put(10,1){\line(1,0){1}}

\put(9,0){\line(0,1){1}}
\put(8,0){\line(0,1){1}}
\put(8,1){\line(1,0){1}}

\put(10.5,1){\line(0,1){1}}
\put(8.5,1){\line(0,1){1}}
\put(8.5,2){\line(1,0){2}}

\put(9.5,2){\line(0,1){16}}
\put(15,17){\line(0,1){1}}
\put(9.5,18){\line(1,0){5.5}}

\put(7,0){\line(0,1){1}}
\put(6,0){\line(0,1){1}}
\put(6,1){\line(1,0){1}}

\put(6.5,1){\line(0,1){1}}
\put(5,0){\line(0,1){2}}
\put(5,2){\line(1,0){1.5}}
\put(5.75,2){\line(0,1){1}}
\put(4,0){\line(0,1){3}}
\put(4,3){\line(1,0){1.75}}
\put(4.75,3){\line(0,1){1}}
\put(3,0){\line(0,1){4}}

\put(3,4){\line(1,0){1.75}}
\put(3.75,4){\line(0,1){15}}
\put(12,18){\line(0,1){1}}
\put(3.75,19){\line(1,0){8.25}}

\put(2,0){\line(0,1){20}}
\put(7.5,19){\line(0,1){1}}
\put(2,20){\line(1,0){5.5}}

\put(1,0){\line(0,1){21}}
\put(4.5,20){\line(0,1){1}}
\put(1,21){\line(1,0){3.5}}
\put(2.5,21){\line(0,1){1}}
\end{picture}

\vspace*{-1.7in}

\includegraphics[width=5.1in]{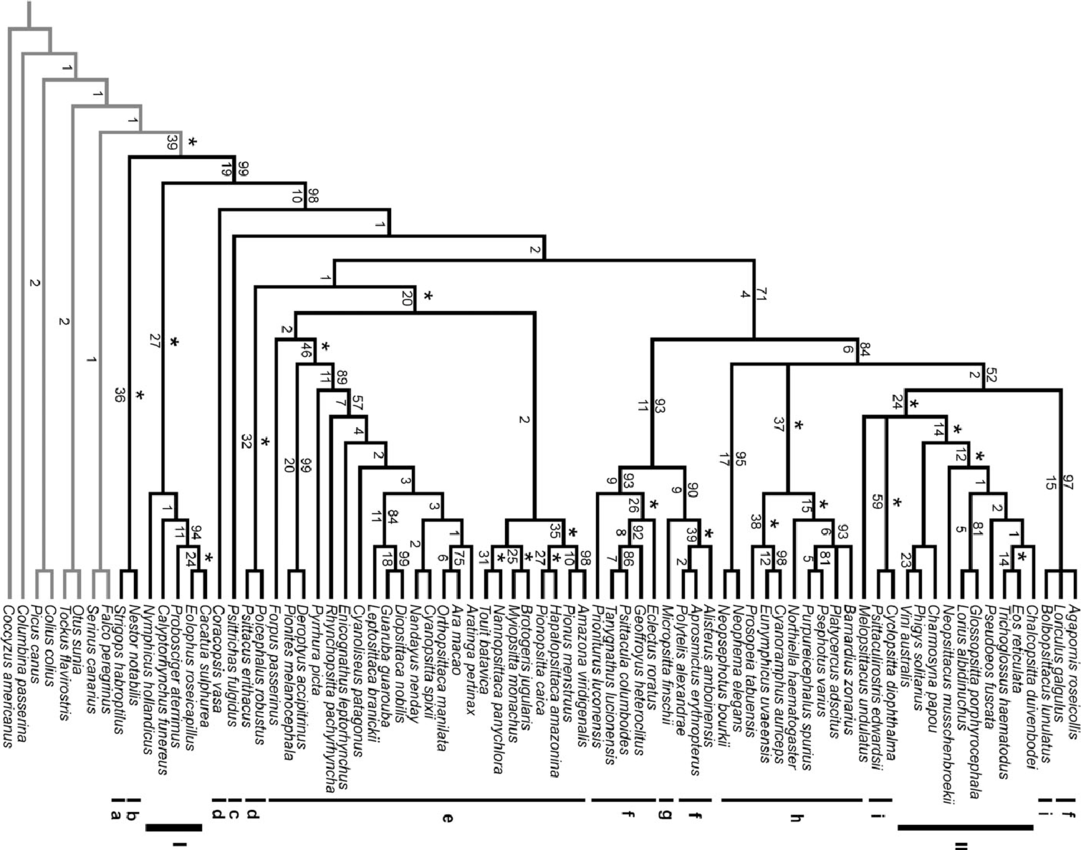}
 
 \caption{Bottom: cladogram showing phylogenetics of 77 parrot species, from \cite{parrot:phy}.  Top: simulation of $\DTCS(77)$, drawn as fringe distribution in the style of biological cladograms.}
\label{Fig:cladogram}
 \end{figure}

\subsection{Fringe trees and cladograms}
\label{sec:clad2}

As mentioned before, the model arose as a toy model for phylogenetic trees, designed to mimic the
uneven splits observed in real world examples.
The small-scale study \cite{me_yule} suggests
that in splits $m \to (i,m-i)$ in real-world phylogenetic trees, the median size of the smaller subtree scales roughly as $m^{1/2}$. 
That data is not consistent with more classical random tree models, where the median size would be 
$O(\log m)$  or $\Theta(m)$, 
but this $m^{1/2}$ median property does hold for our particular model. 
Figure  \ref{Fig:cladogram} compares a simulation of $\DTCS(77)$ with a real cladogram on 77 species; 
these appear visually quite similar.  
As shown in that figure, a cladogram is typically drawn upwards from the leaves, and we draw the fringe tree in the same way.
That is, one should visualize a fringe tree\footnote{There is no biological significance to the positioning of left/right branches, though a common convention is to position the larger subclade to the right.  
In our model, branches are randomly positioned left/right, but in  drawing Figure \ref{Fig:cladogram} (top) we followed the biological convention for visual comparison.}
as in Figure \ref{Fig:cladogram} (top), but with leaves labelled as
$\ldots, -2, -1, -0, 1, 2, \ldots$.

\subsection{Properties of the fringe tree}
\label{sec:propfringe}

For large $n$, a realization of $\DTCS(n)$ will contain many copies of small clades in its fringe.
In the asymptotic fringe tree, the probability that a given leaf is in {\em some} clade $\chi$ of size $i$ is just $a(i)$.
Because a clade of size $i$ has the $\DTCS(i)$ distribution, we can then calculate (numerically via recursion) the probability 
$p(\chi)$ that a leaf is in a {\em specific clade} $\chi$ of size $i$.
Some numerical results are shown in Figure \ref{small_clades}. 
In that figure we have grouped clades with the same {\em shape}, meaning that 
(as in the biology use) we do not distinguish left and right branches. 
Figure \ref{small_clades} compares these model predictions with the data from a small set of real cladograms\footnote{Dragonflies \cite{dragonfly:phy}, eagles \cite{eagles:phy}, elms \cite{elms:phy}, gamebirds \cite{gamebirds:phy}, 
ladybirds \cite{ladybirds:phy}, parrots\cite{parrot:phy}, primates \cite{primates:phy}, sharks \cite{sharks:phy}, snakes \cite{snakes:phy}, swallows \cite{swallows:phy}}
 -- 10 cladograms with a total of 995 species. 
 Further data will be given in \cite{beta2-arxiv}, and it is clear that the current model gives a better fit than other models such as those
 in \cite[Appendix A]{SJ385} and \cite{Ischebeck}.
Note that the
models treated in \cite{SJ385} are precisely the cases $\beta=\infty, 0, -3/2$
of the beta-splitting tree \cite{me_clad}.

But also one can use the fringe tree to study asymptotics of statistics of $\DTCS(n)$ or $\CTCS(n)$, for statistics which depend only on the structure of the tree near the leaves. 
In particular, the number $N_n(\chi)$ of copies of a size-$i$ clade $\chi$ in $\DTCS(n)$ will satisfy
$n^{-1} \Ex[N_n(\chi)] \to p(\chi)/i$.
By analogy with CLT results for other random tree models  \cite[section 14]{fringe}, and because occurrences of a given $\chi$ are only locally dependent,
we expect a CLT for $N_n(\chi)$, but have not attempted a proof.


\begin{figure}[ht]
\setlength{\unitlength}{0.06in}
\begin{picture}(60,80)(0,-64)
\multiput(0,0)(3,0){2}{\circle*{0.9}}
\put(0,0){\line(0,1){4}}
\put(3,0){\line(0,1){4}}
\put(0,4){\line(1,0){3}}
\put(1.5,4){\line(0,1){2}}
\put(-6,-4){$\sfrac{6}{\pi^2} \doteq 0.6079$}
\put(-1,-7){[0.573]}

\multiput(15,0)(3,0){3}{\circle*{0.9}}
\put(15,0){\line(0,1){8}}
\put(18,0){\line(0,1){4}}
\put(21,0){\line(0,1){4}}
\put(18,4){\line(1,0){3}}
\put(19.5,4){\line(0,1){4}}
\put(15,8){\line(1,0){4.5}}
\put(18,8){\line(0,1){2}}
\put(10,-4){$\sfrac{9}{2\pi^2} \doteq 0.4559$}
\put(16,-7){[0.491]}

\multiput(30,0)(3,0){4}{\circle*{0.9}}
\put(30,0){\line(0,1){12}}
\put(33,0){\line(0,1){8}}
\put(36,0){\line(0,1){4}}
\put(39,0){\line(0,1){4}}
\put(36,4){\line(1,0){3}}
\put(37.5,4){\line(0,1){4}}
\put(33,8){\line(1,0){4.5}}
\put(36,8){\line(0,1){4}}
\put(30,12){\line(1,0){6}}
\put(34.5,12){\line(0,1){2}}
\put(26.5,-4){$\sfrac{8}{3\pi^2} \doteq 0.2702$}
\put(32.3,-7){[0.285]}

\multiput(51,0)(3,0){4}{\circle*{0.9}}
\put(51,0){\line(0,1){4}}
\put(54,0){\line(0,1){4}}
\put(51,4){\line(1,0){3}}
\put(52.5,4){\line(0,1){4}}
\put(57,0){\line(0,1){4}}
\put(60,0){\line(0,1){4}}
\put(57,4){\line(1,0){3}}
\put(58.5,4){\line(0,1){4}}
\put(52.5,8){\line(1,0){6}}
\put(55.5,8){\line(0,1){2}}
\put(46.5,-4){$\sfrac{1}{\pi^2} \doteq 0.1013$}
\put(52.5,-7){[0.120]}

\multiput(69,0)(3,0){5}{\circle*{0.9}}
\put(69,0){\line(0,1){16}}
\put(72,0){\line(0,1){12}}
\put(75,0){\line(0,1){8}}
\put(78,0){\line(0,1){4}}
\put(81,0){\line(0,1){4}}
\put(78,4){\line(1,0){3}}
\put(79.5,4){\line(0,1){4}}
\put(75,8){\line(1,0){4.5}}
\put(78,8){\line(0,1){4}}
\put(72,12){\line(1,0){6}}
\put(76.5,12){\line(0,1){4}}
\put(69,16){\line(1,0){7.5}}
\put(75,16){\line(0,1){2}}
\put(66.5,-4){$\sfrac{15}{11\pi^2} \doteq 0.1382$}
\put(72.5,-7){[0.125]}


\multiput(0,-30)(3,0){5}{\circle*{0.9}}
\put(0,-30){\line(0,1){12}}
\put(0,-18){\line(1,0){7.5}}
\put(6,-18){\line(0,1){2}}
\put(3,-30){\line(0,1){4}}
\put(6,-30){\line(0,1){4}}
\put(9,-30){\line(0,1){4}}
\put(12,-30){\line(0,1){4}}
\put(3,-26){\line(1,0){3}}
\put(9,-26){\line(1,0){3}}
\put(4.5,-26){\line(0,1){4}}
\put(10.5,-26){\line(0,1){4}}
\put(4.5,-22){\line(1,0){6}}
\put(7.5,-22){\line(0,1){4}}
\put(-3,-34){$\sfrac{45}{88\pi^2} \doteq 0.0518$}
\put(3,-37){[0.055]}

\multiput(21,-30)(3,0){5}{\circle*{0.9}}
\put(21,-30){\line(0,1){4}}
\put(24,-30){\line(0,1){4}}
\put(21,-26){\line(1,0){3}}
\put(27,-30){\line(0,1){8}}
\put(30,-30){\line(0,1){4}}
\put(33,-30){\line(0,1){4}}
\put(30,-26){\line(1,0){3}}
\put(31.5,-26){\line(0,1){4}}
\put(27,-22){\line(1,0){4.5}}
\put(30,-22){\line(0,1){4}}
\put(22.5,-26){\line(0,1){8}}
\put(22.5,-18){\line(1,0){7.5}}
\put(27,-18){\line(0,1){2}}
\put(18,-34){$\sfrac{5}{4\pi^2} \doteq 0.1267$}
\put(24,-37){[0.145]}

\multiput(42,-30)(3,0){6}{\circle*{0.9}}
\put(42,-30){\line(0,1){20}}
\put(45,-30){\line(0,1){16}}
\put(48,-30){\line(0,1){12}}
\put(51,-30){\line(0,1){8}}
\put(54,-30){\line(0,1){4}}
\put(57,-30){\line(0,1){4}}
\put(54,-26){\line(1,0){3}}
\put(55.5,-26){\line(0,1){4}}
\put(51,-22){\line(1,0){4.5}}
\put(54,-22){\line(0,1){4}}
\put(48,-18){\line(1,0){6}}
\put(52.5,-18){\line(0,1){4}}
\put(45,-14){\line(1,0){7.5}}
\put(51,-14){\line(0,1){4}}
\put(42.5,-10){\line(1,0){9}}
\put(47,-10){\line(0,1){2}}
\put(40,-34){$\sfrac{864}{1375\pi^2} \doteq 0.0637$}
\put(46,-37){[0.030]}

\multiput(66,-30)(3,0){6}{\circle*{0.9}}
\put(66,-30){\line(0,1){16}}
\put(66,-14){\line(1,0){9}}
\put(70.5,-14){\line(0,1){2}}
\put(69,-30){\line(0,1){12}}
\put(69,-18){\line(1,0){7.5}}
\put(75,-18){\line(0,1){4}}
\put(72,-30){\line(0,1){4}}
\put(75,-30){\line(0,1){4}}
\put(78,-30){\line(0,1){4}}
\put(81,-30){\line(0,1){4}}
\put(72,-26){\line(1,0){3}}
\put(78,-26){\line(1,0){3}}
\put(73.5,-26){\line(0,1){4}}
\put(79.5,-26){\line(0,1){4}}
\put(73.5,-22){\line(1,0){6}}
\put(76.5,-22){\line(0,1){4}}
\put(65,-34){$\sfrac{324}{1375\pi^2} \doteq 0.0239$}
\put(71,-37){[0.024]}


\multiput(0,-56)(3,0){6}{\circle*{0.9}}
\put(0,-56){\line(0,1){16}}
\put(0,-40){\line(1,0){9}}
\put(4.5,-40){\line(0,1){2}}
\put(3,-56){\line(0,1){4}}
\put(6,-56){\line(0,1){4}}
\put(3,-52){\line(1,0){3}}
\put(9,-56){\line(0,1){8}}
\put(12,-56){\line(0,1){4}}
\put(15,-56){\line(0,1){4}}
\put(12,-52){\line(1,0){3}}
\put(13.5,-52){\line(0,1){4}}
\put(9,-48){\line(1,0){4.5}}
\put(12,-48){\line(0,1){4}}
\put(4.5,-52){\line(0,1){8}}
\put(4.5,-44){\line(1,0){7.5}}
\put(9,-44){\line(0,1){4}}
\put(-1.5,-60){$\sfrac{72}{125\pi^2} \doteq 0.0584$}
\put(4.5,-63){[0.042]}

\multiput(24,-56)(3,0){6}{\circle*{0.9}}
\put(24,-56){\line(0,1){4}}
\put(27,-56){\line(0,1){4}}
\put(24,-52){\line(1,0){3}}
\put(25.5,-52){\line(0,1){12}}
\put(25.5,-40){\line(1,0){9}}
\put(30,-40){\line(0,1){2}}
\put(30,-56){\line(0,1){12}}
\put(33,-56){\line(0,1){8}}
\put(36,-56){\line(0,1){4}}
\put(39,-56){\line(0,1){4}}
\put(36,-52){\line(1,0){3}}
\put(37.5,-52){\line(0,1){4}}
\put(33,-48){\line(1,0){4.5}}
\put(36,-48){\line(0,1){4}}
\put(30,-44){\line(1,0){6}}
\put(34.5,-44){\line(0,1){4}}
\put(22.5,-60){$\sfrac{36}{55\pi^2} \doteq 0.0663$}
\put(28.5,-63){[0.054]}

\multiput(48,-56)(3,0){6}{\circle*{0.9}}
\put(48,-56){\line(0,1){4}}
\put(51,-56){\line(0,1){4}}
\put(48,-52){\line(1,0){3}}
\put(49.5,-52){\line(0,1){8}}
\put(49.5,-44){\line(1,0){9}}
\put(54,-44){\line(0,1){2}}
\put(54,-56){\line(0,1){4}}
\put(57,-56){\line(0,1){4}}
\put(54,-52){\line(1,0){3}}
\put(55.5,-52){\line(0,1){4}}
\put(60,-56){\line(0,1){4}}
\put(63,-56){\line(0,1){4}}
\put(60,-52){\line(1,0){3}}
\put(61.5,-52){\line(0,1){4}}
\put(55.5,-48){\line(1,0){6}}
\put(58.5,-48){\line(0,1){4}}
\put(46.5,-60){$\sfrac{27}{110\pi^2} \doteq 0.0249$}
\put(52.5,-63){[0.030]}

\multiput(72,-56)(3,0){6}{\circle*{0.9}}
\put(72,-56){\line(0,1){8}}
\put(75,-56){\line(0,1){4}}
\put(78,-56){\line(0,1){4}}
\put(75,-52){\line(1,0){3}}
\put(76.5,-52){\line(0,1){4}}
\put(72,-48){\line(1,0){4.5}}
\put(75,-48){\line(0,1){4}}
\put(81,-56){\line(0,1){8}}
\put(84,-56){\line(0,1){4}}
\put(87,-56){\line(0,1){4}}
\put(84,-52){\line(1,0){3}}
\put(85.5,-52){\line(0,1){4}}
\put(81,-48){\line(1,0){4.5}}
\put(84,-48){\line(0,1){4}}
\put(75,-44){\line(1,0){9}}
\put(79.5,-44){\line(0,1){2}}
\put(70.5,-60){$\sfrac{2}{5\pi^2} \doteq 0.0405$}
\put(76.5,-63){[0.024]}

 \end{picture}
\caption{Proportions of leaves in clades of a given shape, for each shape with $2 - 6$ leaves in the fringe tree.
The top number is from our model, the bottom number $[ \cdots ]$ from our small data set.}
\label{small_clades}
\end{figure}
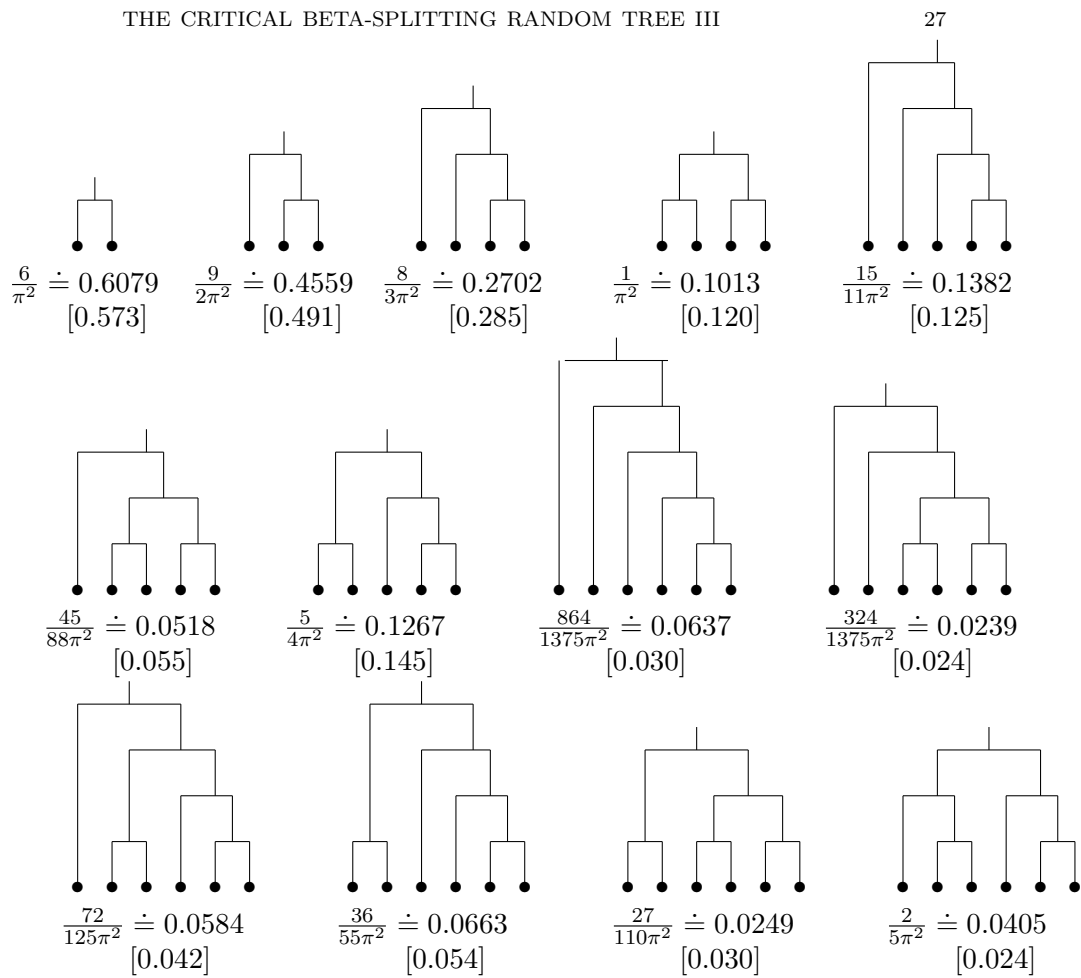

\subsection{Combinatorial questions}
Regarding  the number $N_n(\chi)$ of copies of a clade $\chi$ in $\DTCS(n)$,  
there are aspects which have not been studied (even within the usual random tree models).
For example one could study distributions of the following:
\begin{itemize}
 \item The number $K_n := \sum_\chi 1_{(N_n(\chi) \ge 1)}$ of different-shape clades within (a realization of) $\DTCS(n)$.
\item The largest clade that appears more than once within $\DTCS(n)$.
\item The smallest clade that does not appear within $\DTCS(n)$.
  \end{itemize}
  
  
The difficulty is that, although one can calculate each $p(\chi)$ numerically, we do not have a useful explicit description of the set of probabilities 
$(p(\chi): \ |\chi| = m)$ of size-$m$ clades.

\section{Proving Theorem \ref{T:alimit} via study of $\CTCS(\infty)$ and Mellin transforms}
\label{sec:surprise}
Having the exchangeable formalization of $\CTCS(\infty)$ 
leads to an alternate proof of the second
foundational result
(Theorem \ref{T:alimit}).
This is rather surprising, because convergence of $\CTCS(n)$ to  $\CTCS(\infty)$ seems a kind of ``global" convergence, whereas 
 the asymptotic fringe is a ``local" limit.
 It turns out that the Mellin transform method can be used to study
 many other aspects of the model, including leaf heights.
 Work in progress  will appear in \cite{beta4}, and the following is included here as an illustration of the methodology,
 
 Recall notation from Section \ref{sec:exch}.
The central idea of the proof is to define an infinite measure $\gU$ on
$\oio$ by 
\begin{align}\label{e7}
  \gU :=\intoo \cL(\XP_{t,1})\dd t.
\end{align}
 Formula \eqref{b5} immediately tells us the moments of the measure $\gU$: 
\begin{align}
\label{m1}
  \intoi x^{s-1}\dd\gU(x)
=\intoo \Ex [\XP_{t,1}^{s-1}]\dd t
= \frac{1}{\psi(s)-\psi(1)},
\qquad \Re s>1.
\end{align}
So this is the Mellin transform of $\gU$.
We do not know how to invert the transform to obtain an explicit formula for $\gU$, but what is relevant to the current proof is the behavior of $\gU$ near $0$, as follows.
\begin{Lemma}\label{LM}
  Let $\gU$ be the infinite
measure on $\oio$ having the Mellin transform
\eqref{m1}.
Then $\gU$ is absolutely continuous, with a continuous density $\gu(x)$ on $(0,1)$
that satisfies
\begin{align}\label{lmb}
\gu(x)
= \frac{6}{\pi^2x}+O\bigpar{x^{-s_1}+x^{-s_1}|\log x|\qw},
\end{align}
uniformly for $x\in(0,1)$, 
where $s_1\doteq -0.567$ is the largest negative root of $\psi(s)=\psi(1)$.
In particular, 
for $x\in(0,\frac12)$ say,
\begin{align}\label{lmob}
\gu(x)
= \frac{6}{\pi^2x}+O\bigpar{x^{-s_1}} \mbox{ as } x \downarrow 0
.\end{align}
\end{Lemma}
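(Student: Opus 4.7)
The plan is to apply Mellin inversion. From \eqref{m1}, $\widetilde\gU(s):=1/(\psi(s)-\psi(1))$ is the Mellin transform of $\gU$ on the half-plane $\Re s>1$, so I aim to establish the inversion formula
\begin{align*}
\gu(x) = \frac{1}{2\pi\ii}\int_{\sigma-\ii\infty}^{\sigma+\ii\infty}\frac{x^{-s}}{\psi(s)-\psi(1)}\,\ddx s
\qquad(\sigma>1),
\end{align*}
and then to shift the contour to the left past the leading singularities, reading off the asymptotic behaviour of $\gu(x)$ as $x\downto 0$ from the residues.

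The first step is to locate the singularities of $\widetilde\gU$ in the strip $s_1-\varepsilon<\Re s\le\sigma$. The function $\psi$ is strictly increasing on $(0,\infty)$ and takes all real values, and $\psi(1)=-\gamma$, so $s=1$ is the unique real zero of $\psi(s)-\psi(1)$ with $\Re s>0$; the Taylor expansion there gives a simple zero with $\psi'(1)=\zeta(2)=\pi^2/6$, and hence a simple pole of $\widetilde\gU$ with residue $6/\pi^2$. On $(-1,0)$, $\psi$ likewise takes all real values, giving the single additional real zero $s=s_1$. The key analytic step is to exclude non-real zeros of $\psi(s)-\psi(1)$ in this strip: using the Stirling-type asymptotic $\psi(s)=\log s+O(1/|s|)$, one obtains $|\psi(s)-\psi(1)|\ge c\log|t|$ for $|t|$ large, which confines zeros to a bounded rectangle, after which an argument-principle count (or direct numerical check) confirms that only the two real zeros appear.

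Next I would shift the contour from $\Re s=\sigma$ down to $\Re s=s_1-\varepsilon$, picking up the residues at $s=1$ (giving the main term $6/(\pi^2 x)$) and at $s=s_1$ (contributing $x^{-s_1}/\psi'(s_1)=O(x^{-s_1})$, which becomes part of the error). The remaining shifted integral does not converge absolutely, since $1/|\psi(s)-\psi(1)|$ decays only like $1/\log|t|$ on a vertical line; to handle it I would integrate by parts once via $x^{-s}=-(\log x)^{-1}\tfrac{\ddx}{\ddx s}x^{-s}$. The boundary terms vanish at $\pm\ii\infty$, and the new integrand $x^{-s}\psi'(s)/(\psi(s)-\psi(1))^2$ is of size $O(x^{s_1-\varepsilon}\cdot|t|^{-1}(\log|t|)^{-2})$ on the line, hence absolutely integrable; this yields a bound of order $x^{-s_1}/|\log x|$ for the contour integral, giving the total error $O(x^{-s_1}+x^{-s_1}|\log x|^{-1})$. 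Continuity of $\gu$ and absolute continuity of $\gU$ on $(0,1)$ then follow a posteriori from this convergent integral representation.

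The main obstacle I anticipate is the exclusion of complex zeros of $\psi(s)-\psi(1)$ in the strip $s_1<\Re s<1$; while the Stirling bound quickly reduces the problem to a bounded rectangle, verifying that no complex zeros lie inside it appears to require either delicate analytic estimates or a finite numerical computation. A secondary concern is the initial justification of the Mellin inversion itself, for which it suffices to establish enough decay of $\widetilde\gU(\sigma+\ii t)$ in $t$ on some line $\Re s=\sigma>1$ --- again a routine consequence of the Stirling asymptotic for $\psi$.
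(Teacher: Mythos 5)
Your overall strategy coincides with the paper's: Mellin inversion, a contour shift past the poles of $1/(\psi(s)-\psi(1))$ at $s=1$ and $s=s_1$, and an integration by parts in $s$ to compensate for the slow $1/\log|\Im s|$ decay of the transform on vertical lines (the paper's device of differentiating the Mellin transform, i.e.\ multiplying the measure by $\log x$, is exactly dual to your integration by parts, and produces the same integrable kernel $\psi'(s)/(\psi(s)-\psi(1))^2$). But two steps are left genuinely open. The first is the exclusion of complex roots of $\psi(s)=\psi(1)$ in the relevant strip: you reduce this to a bounded rectangle and then defer to an argument-principle count or a numerical check, which as stated is not a proof. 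There is in fact a one-line argument from the series $\psi(s)=-\gamma+\sum_{k\ge0}\bigl(\frac{1}{k+1}-\frac{1}{k+s}\bigr)$: if $\Im s>0$ then $\Im\bigl(-\frac{1}{k+s}\bigr)>0$ for every $k$, so $\Im\psi(s)>0$, and symmetrically for $\Im s<0$; hence \emph{all} roots are real, and strict monotonicity of $\psi$ on each interval between consecutive poles pins down exactly one root per interval. This is the paper's Lemma~\ref{Lpsi}, and it removes your ``main obstacle'' entirely.

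The second, more serious, gap is your claim that absolute continuity of $\gU$ and the identification of the contour integral with its density ``follow a posteriori from the convergent integral representation''. Writing $\gu(x)=\frac{1}{2\pi\ii}\int x^{-s}(\psi(s)-\psi(1))^{-1}\dd s$ presupposes both that $\gU$ has a density and that the inversion formula applies to it, and neither is available a priori: the transform is not integrable on vertical lines, and one does not know in advance that a density exists, let alone one of locally bounded variation (the paper notes explicitly that the standard inversion theorems are therefore unavailable). After your integration by parts you obtain a well-defined continuous function, but you must still prove that this function is the density of the measure $\gU$. The paper closes this by passing to the finite signed measure corresponding to $\log(x)\,x\dd\gU(x)$ (after subtracting the term responsible for the pole at $s=1$), observing that its Fourier transform $\tnux'(1+\ii u)$ is integrable, and invoking the standard fact that a finite measure with integrable Fourier transform is absolutely continuous with continuous density given by the inversion integral; only then is the contour shift applied to that legitimate representation. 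Some argument of this type (or a uniqueness theorem for Mellin transforms of measures, verified by a Fubini computation) is needed; without it your argument gives an asymptotic expansion of a function not yet shown to be $\gu$.
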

The (quite technical) proof of this ``inversion" lemma is given in
\refApp{invert}.

\subsection{Deriving Theorem \ref{T:alimit} }
\label{sec:pfMellin}
Here we show how to derive Theorem \ref{T:alimit}
 via  the exchangeable representation and Theorem \ref{T:exact} and Lemma \ref{LM}.

For $j\ge2$ let,
as in \refS{sec:OP},
$N_n(j)$ be the number of internal nodes of $\CTCS(n)$ that
have exactly $j$ leaves as descendants. Similarly, let
$N_n(j;t)$ be the number of clades of $\CTCS(n)$ at time $t$ that have size
exactly $j$.
Let
\begin{align}
\label{e1}
 e_n(j):=\Ex [N_n(j)],\qquad
e_n(j;t):=\Ex [N_n(j;t)].
\end{align}
The integral $\intoo N_n(j;t)\dd t$ equals the sum 
of the lifetimes of all clades of size $j$ that ever appear in $\CTCS(n)$. 
Because these lifetimes
have expectation $1/h_{j-1}$ (and are independent of the structure), we have
\begin{align}
\label{e2}
  \intoo e_n(j;t)\dd t = \Ex \Bigsqpar{ \intoo N_n(j;t)\dd t }
= \frac{1}{h_{j-1}}\Ex [N_n(j)]
= \frac{e_n(j)}{h_{j-1}}.
\end{align}
As noted previously
in \eqref{jup}, 
\begin{align}\label{e3}
  a(n,j) = \sfrac{j}{n}  e_n(j)
\end{align}
so to prove Theorem \ref{T:alimit} it will suffice to study the behavior of  $e_n(j)$.

To start to calculate $e_n(j)$, use the paintbox construction 
in Theorem \ref{T:paintbox}
to see that
\begin{quote}
conditioned on $(P_{t,\ell})_{\ell=1}^\infty$,
the probability that a given set of $j$ leaves form a clade at
time $t$ equals $\sum_\ell \XP_{t,\ell}^j(1-\XP_{t,\ell})^{n-j}$. 
\end{quote}
Thus,  by recalling
that $\XP_{t,1}$ can be regarded as a size-biased sample of
$(\XP_{t,\ell})_{\ell=1}^\infty$
\cite[Corollary 2.4]{bertoin}, we see
\begin{align}\label{e6}
  e_n(j;t) 
= \binom nj \Ex \Bigsqpar{\sum_\ell \XP_{t,\ell}^j(1-\XP_{t,\ell})^{n-j}}
= \binom nj \Ex \bigsqpar{ \XP_{t,1}^{j-1}(1-\XP_{t,1})^{n-j}}.
\end{align}
Recall that  $\gU$ is the infinite measure on $\oio$ given by
\begin{align}\label{e7b}
  \gU :=\intoo \cL(\XP_{t,1})\dd t.
\end{align}
Then \eqref{e6} yields
\begin{align}
\label{e8}
\intoo e_n(j;t)\dd t
= \binom nj \intoi x^{j-1}(1-x)^{n-j}\dd\gU(x)
\end{align}
and thus
\begin{align}
\frac{1}{n}\intoo e_n(j;t)\dd t
\sim \frac{n^{j-1}}{j!} \intoi x^{j-1}(1-x)^{n-j}\dd\gU(x)
\qquad\text{as \ntoo}.
\end{align}
Lemma \ref{LM} gives us the relevant information about the density $\gu(x)$ of $\gU$, and then we complete a proof of Theorem \ref{T:alimit} as follows.
  By \eqref{e2}, \eqref{e8} and Lemma \ref{LM},
  we have
\begin{align}\label{fr2}
\frac{1}{h_{j-1}} \
\frac{e_n(j)}{n}
=\frac{1}{n}\intoo e_n(j;t)\dd t 
= \frac{1}{n}\binom nj \intoi x^{j-1}(1-x)^{n-j}\gu(x)\dd x .
\end{align}
Substitute $\gu(x)$ from \eqref{lmb}. The main term becomes, using a standard beta integral,
\begin{align}\label{fr3}
  \frac{1}{n}\binom nj \intoi x^{j-1}(1-x)^{n-j}\frac{6}{\pi^2}x\qw\dd x&
=\frac{6}{\pi^2} \frac{1}{n}\binom nj \intoi x^{j-2}(1-x)^{n-j}\dd x
\\\notag&
=\frac{6}{\pi^2} \frac{1}{n}\binom nj \frac{\gG(j-1)\gG(n-j+1)}{\gG(n)}
\\\notag&
=\frac{6}{\pi^2} \frac{1}{n}\binom nj \frac{(j-2)!\,(n-j)!}{(n-1)!}
\\\notag&
=\frac{6}{\pi^2} \frac{1}{j(j-1)}.
\end{align}
The contribution from the error term in \eqref{lmb} 
has absolute value at most,
letting $C$ denote unimportant constants (not necessarily the same),
and using $-\log x > 1-x$,
\begin{align}\label{fr4}
&
 C \frac{1}{n}\binom nj 
\intoi x^{j-1}(1-x)^{n-j}x^{-s_1}\bigpar{1+|\log x|\qw}\dd x
\\\notag&\hskip8em
\le C n^{j-1} \intoi x^{j-s_1-1}(1-x)^{n-j-1}\dd x
\\\notag&\hskip8em
\le C n^{j-1} \intoo x^{j-s_1-1}e^{-(n-j-1)x}\dd x
\\\notag&\hskip8em
= C n^{j-1}(n-j-1)^{-j+s_1}
\\\notag&\hskip8em
= O\bigpar{n^{s_1-1}}.
\end{align}
This is $o(1)$ as \ntoo, and thus from 
\eqref{fr2} and \eqref{fr3}, 
for every fixed $j\ge2$,
  \begin{align}\label{tfr1}
 \frac{e_n(j)}{n}\to  
    \frac{6}{\pi^2}\frac{h_{j-1}}{j(j-1)}
.  \end{align}
Then by \eqref{e3} we get the assertion of Theorem \ref{T:alimit}: for $j \ge 2$,
\begin{align}\label{e14}
  a(n,j)\to  \frac{6}{\pi^2}\frac{h_{j-1}}{j-1} =: a(j) 
\end{align}
with the bound
\begin{equation}
\label{anjbd}
| a(n,j) - a(j)| = O\bigpar{n^{s_1-1}} \mbox{ as } n \to \infty .
\end{equation}
\qed

 \section{Final remarks}
 \subsection{The general beta-splitting model}
 \label{sec:motive}
 The mathematical theme of  \cite{me_clad} was to introduce the {\em beta-splitting model} with split probabilities
 \begin{equation}
 q(n,i) = \frac{1}{a_n(\beta)} \ \frac{\Gamma(\beta + i + 1) \Gamma(\beta +n - i + 1)} { \Gamma(i+1) \Gamma(n-i+1)} , \ 1 \le i \le n-1 
 \label{rule-1}
 \end{equation}
with a parameter $- 2 \le \beta \le \infty$ and normalizing constant $a_n(\beta)$.
 The qualitative behavior of the model is different for $\beta > -1$ than for $\beta < -1$;
in the former case the height (number of edges to the root) of a typical leaf grows as order $\log n$, 
and in the latter case as order $n^{-\beta - 1}$.
In this article we are studying the {\em critical}\footnote{Hence our terminology CS for {\em critical splitting}. But note that {\em critical} in our context is quite different from the usual {\em critical} in the context of branching processes or percolation.} case $\beta = -1$. 
The general beta-splitting model is often
mentioned in the mathematical biology literature 
on phylogenetics as one of several simple stochastic models.
  See \cite{lambert,steel} for recent overviews of that literature.

 \subsection{Analogies with and differences from the Brownian CRT}
 \label{sec:CRT}
As noted in the introduction, this article is part of a broad project investigating the random tree model.
 The document \cite{beta2-arxiv}, which will be periodically updated,  is intended to provide an overview: 
statements of results, proofs not given elsewhere, heuristics and open problems, data from phylogenetic trees,
 and general discussion. 
 Let us discuss below only one aspect of the project.
 
 The best known continuous limit of finite random tree models is the Brownian 
continuum random tree (CRT) \cite{crt2,crt3,evans,goldschmidt2}, 
which is a scaling limit of conditioned Galton-Watson trees and other ``uniform random tree" models.
 Our $\CTCS(\infty)$ model can also be regarded as a scaling limit\footnote{The ``scaling" in $\CTCS(n)$  arises from the initial splitting rate being $h_{n-1}$; 
in the finite uniform random tree model we scale edge lengths to be order $n^{-1/2}$.}  
 of $\CTCS(n)$. 
How do these compare?
 
 \smallskip
 \noindent ({\bf a})  The most convenient formalization of the Brownian CRT is as a random {\em measured metric space}, with the
Gromov-Hausdorff-Prokhorov topology \cite{ADH} on the set of all such spaces.  So one automatically has a notion of convergence in distribution.
Our formalization of $\CTCS(\infty)$ via exchangeable partitions is less amenable to rephrasing as a random element of some metric space. 
For instance it is easy to visualize Brownian motion  \cite{andriopoulos2024cover} on a realization of the CRT, but it seems harder to visualize a stochastic process on a realization of $\CTCS(\infty)$.

 \smallskip
 \noindent  ({\bf b}) Our consistency  result, that $\CTCS(n)$ is consistent as $n$ increases, and exchangeable over the random leaves, 
constitutes one general approach to the construction of continuum random trees (CRTs) \cite{crt3,evans}.

\smallskip
 \noindent ({\bf c}) Our explicit inductive construction (growth algorithm) is analogous to the line-breaking constructions of the Brownian CRT \cite{crt2} and stable trees \cite{linebreak}.

\smallskip
 \noindent ({\bf d})  Haas et al \cite{haas-pitman} and subsequent work such as \cite{haas-miermont}
have given a detailed general treatment of self-similar fragmentations via  exchangeable partitions, though the focus there
is on characterizations and on models like the 
$-2 < \beta < -1$ case of the beta-splitting model. 
On the range $-2 < \beta < -1$ , such models have limits which are qualitatively analogous to the Brownian continuum random tree, 
which is the case $\beta = -2$.

\smallskip
 \noindent ({\bf e}) It is implausible that $\CTCS(\infty)$ is as ``universal" a limit as the Brownian CRT has proved to be, but nevertheless one can ask
{\em Are there superficially different discrete models whose limit is the same $\CTCS(\infty)$?}
The key feature of our model seems to be the subordinator approximation \eqref{approx}: can this arise in some other model?

\section*{Acknowledgments}
Thanks to Boris Pittel for extensive interactions regarding this project.
Thanks to Serte Donderwinkel for pointing out a gap in an early version,
 and to Jim Pitman and David Clancy and Prabhanka Deka for helpful comments
 on  early versions. 
 Thanks especially to B\'{e}n\'{e}dicte Haas for his careful explanation of how our setting fits into the general theory of exchangeable random partitions,
 which is the basis of our Section \ref{sec:exch}, as noted in Remark \ref{R:Haas}.
 
For recent 
alternative proofs mentioned in the text we thank  Brett
Kolesnik,   Luca Pratelli and Pietro Rigo, 
and in particular Alexander Iksanov, whose observation of the connection
with  regenerative composition structures 
may lead to interesting further results.

\newpage
\appendix

\section{More on the consistency property and growth algorithm}
\label{sec:proofCP}

We give in this section our original proof  of the consistency property \refT{T:consistent}
and the growth algorithm \refT{T:growth}.  This proof uses only straightforward
(although rather long) calculations of (conditional) probabilities.

Recall the statement of Theorem  \ref{T:consistent}:
 {\em The operation ``delete and prune leaf $n+1$ from $\CTCS(n+1)$" gives a tree distributed as $\CTCS(n)$.}



Consider a pair of trees $(\bt_n, \bt_{n+1})$ 
with $n$ and $n+1$ leaves,
in which $\bt_n$ can be obtained from $\bt_{n+1}$ via the
``delete a leaf and prune" operation in Figure \ref{Fig:dpo}.
So $\bt_{n+1}$ arises by adding a new bud to $\bt_n$,
which can happen in one of three qualitative ways illustrated in Figure
\ref{Fig:3b}.

We will do explicit calculations for (CTCS(3), CTCS(4)) in the following section.
This enables one to guess the growth algorithm in \refT{T:growth}, which
we 
verify for general $n$ in Section \ref{sec:general_step}.
In the argument below, it will be convenient to use unlabelled leaves, 
cf.\ \refR{R:unlabelled}.

\subsection{A starting step}
The distribution of $\CTCS(n)$ is specified by the shape of the tree and the probability density of the edge-lengths.
For $n = 3$ there are only two possible shapes, as  $\bt$ in Figure \ref{Fig:density} and as its ``reflection" with the side-bud on the left instead of the right.
There are two  edge-lengths $(a,b)$. Clearly the density of CTCS(3) is
\begin{equation}
 f(\bt; a,b) =  \sfrac{1}{2}  h_2 e^{-h_2a} \cdot e^{-b}; \quad 0 < a, b < \infty   
 \label{fab}
 \end{equation}
and the probability of $\bt$ is $1/2$.
There are 7 shapes of CTCS(4) that are consistent with this $\bt$, shown as $\bt_1, \bt_2, \bt_3, \bt_4$ in Figure \ref{Fig:density}, together with the ``reflected"
forms of $\bt_1$ and $\bt_3$ and $\bt_4$ (the added side-bud involves the other side; drawn as $\hat{\mathbf{t}}_1, \hat{\mathbf{t}}_3, \hat{\mathbf{t}}_4$)
which will be accounted for as $q(\cdot,\cdot) + q(\cdot,\cdot) $ terms in
the calculation below.\footnote{That is, $f^+_1$ is the density of $\bt_1$
  plus the density of $\hat{\mathbf{t}}_1$.}
The densities of these shapes involve 3 edge-lengths $(a,b,c)$, calculated below as $f^+_i(a,b,c)$.
We also calculate the marginals $f_i(a,b) = \int f^+_i(a,b,c) \dd c$.

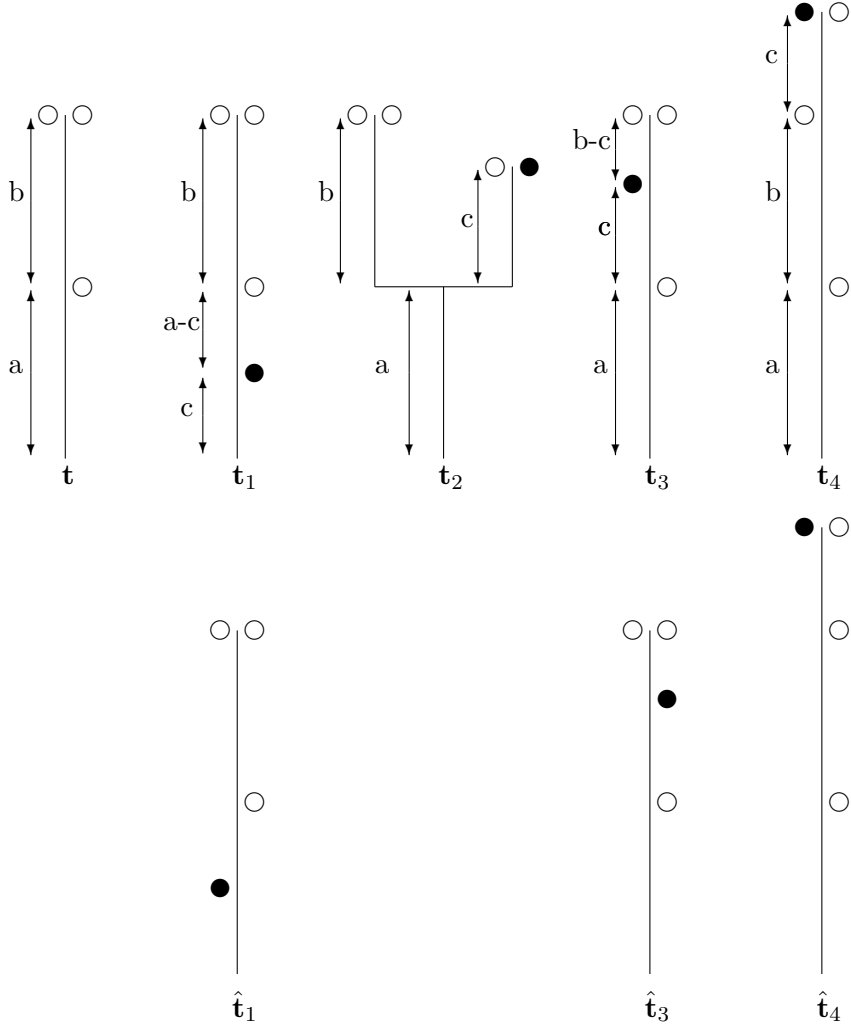
\begin{figure}[ht]
\setlength{\unitlength}{0.09in}
\begin{picture}(60,60)(-3,-33)

\put(0,0){\line(0,1){20}}
\put(1,10){\circle{1}}
\put(1,20){\circle{1}}
\put(-1,20){\circle{1}}
\put(-3.3,5){a}
\put(-2,5){\vector(0,1){4.8}} 
\put(-2,5){\vector(0,-1){4.8}} 
\put(-3.3,15){b}
\put(-2,15){\vector(0,1){4.8}} 
\put(-2,15){\vector(0,-1){4.8}} 
\put(-0.2,-1.5){\bf t}

\put(10,0){\line(0,1){20}}
\put(11,10){\circle{1}}
\put(11,20){\circle{1}}
\put(9,20){\circle{1}}
\put(6.7,15){b}
\put(8,15){\vector(0,1){4.8}} 
\put(8,15){\vector(0,-1){4.8}} 
\put(11,5){\circle*{1}}
\put(8,2.5){\vector(0,-1){2.2}} 
\put(8,2.5){\vector(0,1){2.2}} 
\put(6.7,2.5){c}
\put(8,7.5){\vector(0,-1){2.2}} 
\put(8,7.5){\vector(0,1){2.2}} 
\put(5.7,7.5){a-c}
\put(9.7,-1.5){$\mathbf{t}_1$}

\put(22,0){\line(0,1){10}}
\put(18,10){\line(1,0){8}}
\put(18,5){a}
\put(20,5){\vector(0,1){4.8}} 
\put(20,5){\vector(0,-1){4.8}} 
\put(18,10){\line(0,1){10}}
\put(26,10){\line(0,1){7}}
\put(17,20){\circle{1}}
\put(19,20){\circle{1}}
\put(25,17){\circle{1}}
\put(27,17){\circle*{1}}
\put(14.7,15){b}
\put(16,15){\vector(0,1){4.8}} 
\put(16,15){\vector(0,-1){4.8}} 
\put(23,13.5){c}
\put(24,13.5){\vector(0,1){3,3}} 
\put(24,13.5){\vector(0,-1){3,3}} 
\put(21.7,-1.5){$\mathbf{t}_2$}

\put(34,0){\line(0,1){20}}
\put(35,10){\circle{1}}
\put(35,20){\circle{1}}
\put(33,20){\circle{1}}
\put(30.7,5){a}
\put(32,5){\vector(0,1){4.8}} 
\put(32,5){\vector(0,-1){4.8}} 
\put(33,16){\circle*{1}}
\put(31,13){c}
\put(32,13){\vector(0,1){2.8}} 
\put(32,13){\vector(0,-1){2.8}} 
\put(31,13){c}
\put(32,18){\vector(0,1){1.8}} 
\put(32,18){\vector(0,-1){1.8}} 
\put(29.5,18){b-c}
\put(33.7,-1.5){$\mathbf{t}_3$}

\put(44,0){\line(0,1){26}}
\put(45,10){\circle{1}}
\put(43,20){\circle{1}}
\put(40.7,5){a}
\put(42,5){\vector(0,1){4.8}} 
\put(42,5){\vector(0,-1){4.8}} 
\put(40.7,15){b}
\put(42,15){\vector(0,1){4.8}} 
\put(42,15){\vector(0,-1){4.8}} 
\put(45,26){\circle{1}}
\put(43,26){\circle*{1}}
\put(40.7,23){c}
\put(42,23){\vector(0,1){2.8}} 
\put(42,23){\vector(0,-1){2.8}} 
\put(43.7,-1.5){$\mathbf{t}_4$}


\put(10,-30){\line(0,1){20}}
\put(11,-20){\circle{1}}
\put(11,-10){\circle{1}}
\put(9,-10){\circle{1}}
\put(9,-25){\circle*{1}}
\put(9.7,-32.5){$\hat{\mathbf{t}}_1$}

\put(34,-30){\line(0,1){20}}
\put(35,-20){\circle{1}}
\put(35,-10){\circle{1}}
\put(33,-10){\circle{1}}
\put(35,-14){\circle*{1}}
\put(33.7,-32.5){$\hat{\mathbf{t}}_3$}

\put(44,-30){\line(0,1){26}}
\put(45,-20){\circle{1}}
\put(45,-10){\circle{1}}
\put(45,-4){\circle{1}}
\put(43,-4){\circle*{1}}
\put(43.7,-32.5){$\hat{\mathbf{t}}_4$}

\end{picture}
\caption{The  possible transitions from $\bt$: the added bud is $\bullet$.}
\label{Fig:density}
\end{figure}

\noindent
The consistency assertion that we wish to verify is the assertion, for $f = f(\bt; \cdot, \cdot)$ as at \eqref{fab}, 
\begin{equation}
 f \overset{?}{=}  \sfrac{1}{4} f_1 + \sfrac{2}{4}f_2 +  \sfrac{1}{4} f_3 + \sfrac{2}{4}f_4 
 \label{fab2}
 \end{equation}
where the fractions denote the probability that deleting a random bud gives $\bt$ with the given edge-lengths $(a,b)$.
From the definition of CTCS(4) we can calculate
\begin{align}
 f^+_1(a,b,c) &= (q(4,1) + q(4,3)) \cdot h_3 e^{-h_3c} \cdot q(3,2) \cdot  h_2 e^{-h_2 (a-c)} \cdot e^{-b} \\
 f_1(a,b) &= (q(4,1) + q(4,3)) \cdot h_3 \cdot 3(1 - e^{-a/3})  \cdot q(3,2) \cdot h_2 e^{-h_2 a} \cdot e^{-b} \\\notag
  &= 3(e^{-h_2a} - e^{-h_3a})\cdot  e^{-b} \\
 f^+_2(a,b,c) &= q(4,2) \cdot h_3 e^{-h_3 a} \cdot e^{-b} e^{-c} \\
f_2(a,b) &= q(4,2) \cdot h_3 e^{-h_3 a} \cdot e^{-b}   \\\notag
&= \sfrac{1}{2} e^{-h_3a} \cdot e^{-b} \\
f^+_3(a,b,c) &= q(4,3) \cdot  h_3 e^{-h_3 a} \cdot (q(3,2) + q(3,1)) \cdot h_2 e^{-h_2c} \cdot e^{-(b-c)} \\
f_3(a,b) &= q(4,3) \cdot  h_3 e^{-h_3 a} \cdot (q(3,2) + q(3,1)) \cdot h_2 \cdot 2(1 - e^{-b/2}) \cdot e^{-b} \\\notag
&=  2 e^{-h_3a} (1 - e^{-b/2}) \cdot e^{-b} \\
f^+_4(a,b,c) &= q(4,3) \cdot  h_3 e^{-h_3 a} \cdot (q(3,2) + q(3,1)) \cdot h_2 e^{-h_2b} \cdot e^{-c} \\
f_4(a,b) &= q(4,3) \cdot  h_3 e^{-h_3 a} \cdot (q(3,2) + q(3,1)) \cdot h_2 e^{-h_2b} \\\notag
&= e^{-h_3a} \cdot e^{-h_2b} .
  \end{align}
  From this we can verify \eqref{fab2}.
  
This argument is not so illuminating, but we can immediately derive the conditional distribution of CTCS(4) given that CTCS(3) is $(\bt, a, b)$.
Writing $g_i(c|a,b)$ for the conditional density of shape $\bt_i$ or $\hat{\bt}_i$ and additional edge length $c$, 
and $p(\bt_i | a,b) = \int g_i(c|a,b) \ dc$ for the conditional probability of shape $\bt_i$ or  $\hat{\bt}_i$, 
we have 
\begin{align}
g_1(c|a,b) = \frac{ \frac{1}{4} f^+_1(a,b,c)}{f(a,b)} &= \sfrac{1}{3} e^{-c/3} ; \quad p(\mathbf{t}_1 | a,b) = 1 - e^{-a/3} \\
 g_2(c|a,b) = \frac{ \frac{1}{2} f^+_2(a,b,c)}{f(a,b)} &= \sfrac{1}{3} e^{-a/3} \cdot e^{-c} ; \quad p(\mathbf{t}_2 | a,b) = \sfrac{1}{3}  e^{-a/3}   \\
 g_3(c|a,b) = \frac{ \frac{1}{4} f^+_3(a,b,c)}{f(a,b)} &= \sfrac{1}{3} e^{-a/3} \cdot e^{-c/2} ; \quad p(\mathbf{t}_3 | a,b) = \sfrac{2}{3}  e^{-a/3}   (1 - e^{-b/2}) \\
g_4(c|a,b) = \frac{ \frac{1}{2} f^+_4(a,b,c)}{f(a,b)} &= \sfrac{2}{3} e^{-a/3} e^{-b/2} \cdot e^{-c} ; \quad p(\mathbf{t}_4 | a,b) = \sfrac{2}{3}  e^{-a/3}  \cdot  e^{-b/2}  .
\end{align}

One can now see that these are the conditional probabilities that arise from
the growth algorithm in 
\refT{T:growth}
which we for convenience repeat:
\smallskip

{\em
  Given a realization of\/ $\CTCS(n)$ for some $n \ge 1$ (above, $n=3$):
 \begin{enumerate}
 \item Pick a uniform random bud; move up the path from the root toward that
   bud.
 A \stopp{} event occurs at rate = $1$/(size of clade from current position).
 \item If\/ \stopp{} before reaching the target bud, make a side-bud at that point, random on left or right. (As in $\bt_1$ or $\bt_3$ above.)
 \item Otherwise, extend the target bud into a branch of\/ $\Expo(1)$
   length to make a bud-pair.   (As in $\bt_2$ or $\bt_4$ above.)
\end{enumerate}
}%
Figure \ref{Fig:density} indicated three of these possibilities $(\bt_1, \bt_3, \bt_4)$ when the chosen target bud was at the top right.  
The ``rate" is $1/3$ until the side-bud, and then $1/2$. 
Note that case $\bt_2$ arises as an ``extend the target bud" for a different target bud.

\subsection{The general step}
\label{sec:general_step}
To set up a calculation, we
consider the side-bud addition
case first, illustrated by the example in Figure \ref{Fig:4},
where the left diagram shows the relevant part of $\bt_n$ and the right diagram shows the side-bud addition making $\bt_{n+1}$.
The $\ell_i$ are edge-lengths and the $(n_i)$ are clade sizes.  
The side-bud is attached to some edge, in Figure \ref{Fig:4} 
an edge at edge-height $4$ with length $\ell_4$ and defining a clade of size $n_4 \ge 2$.
The new bud splits that edge into edges of length $\alpha$ and $\ell_4 - \alpha$.
The probability density function on a given tree is a product of terms for each edge.
Table \ref{table:1} shows the terms for the edges where the terms differ between the two trees -- 
these are only the edges on the path from the root to the added bud.
The first three lines in Table \ref{table:1} refer to the edges below the old edge into which the new bud is inserted, and the bottom line refers
to that old edge.

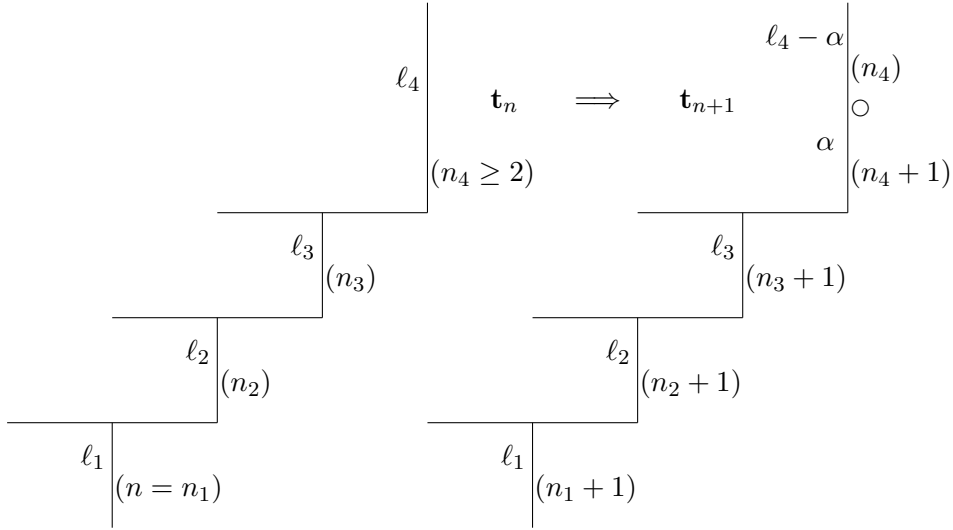
\begin{figure}[ht]
\setlength{\unitlength}{0.11in}
\begin{picture}(40,27)(0,0)

\put(0,0){\line(0,1){5}}
\put(0.1,1.5){($n = n_1$)}
\put(-1.5,3){$\ell_1$}
\put(5,5){\line(-1,0){10}}
\put(5,5){\line(0,1){5}}
\put(5.1,6.5){($n_2$)}
\put(3.5,8){$\ell_2$}
\put(10,10){\line(-1,0){10}}
\put(10,10){\line(0,1){5}}
\put(10.1,11.5){($n_3$)}
\put(8.5,13){$\ell_3$}
\put(15,15){\line(-1,0){10}}
\put(15,15){\line(0,1){10}}
\put(15.1,16.5){($n_4 \ge 2$)}
\put(13.5,21){$\ell_4$}

\put(20,0){\line(0,1){5}}
\put(20.1,1.5){($n_1 +1$)}
\put(18.5,3){$\ell_1$}
\put(25,5){\line(-1,0){10}}
\put(25,5){\line(0,1){5}}
\put(25.1,6.5){($n_2 +1$)}
\put(23.5,8){$\ell_2$}
\put(30,10){\line(-1,0){10}}
\put(30,10){\line(0,1){5}}
\put(30.1,11.5){($n_3 + 1$)}
\put(28.5,13){$\ell_3$}
\put(35,15){\line(-1,0){10}}
\put(35,15){\line(0,1){10}}
\put(35.1,16.5){($n_4 +1$)}
\put(35.1,21.5){($n_4$)}
\put(33.5,18){$\alpha$}
\put(31.1,23){$\ell_4 - \alpha$}
\put(35.6,20){\circle{0.8}}

\put(18,20){$\bt_n$}
\put(27,20){$\bt_{n+1}$}
\put(22,20){$\Longrightarrow$}

\end{picture}
\caption{Growing via a side-bud addition}
\label{Fig:4}
\end{figure}


\begin{table}[ht]
$
\begin{array}{c|c}
\mbox{left tree} & \mbox{right tree}\\
h_{n_1-1} \exp(- h_{n_1-1} \ell_1) \ddx \ell_1 \ \cdot \ q(n_1,n_2) 
& 
h_{n_1} \exp(- h_{n_1} \ell_1) \ddx \ell_1 \ \cdot \ q(n_1+1,n_2+1) \\
h_{n_2-1} \exp(- h_{n_2-1} \ell_2) \ddx \ell_2 \ \cdot \ q(n_2,n_3) 
& 
h_{n_2} \exp(- h_{n_2} \ell_2) \ddx \ell_2 \ \cdot \ q(n_2+1,n_3+1) \\
h_{n_3-1} \exp(- h_{n_3-1} \ell_3) \ddx \ell_3 \ \cdot \ q(n_3,n_4) 
& 
h_{n_3} \exp(- h_{n_3} \ell_3) \ddx \ell_3 \ \cdot \ q(n_3+1,n_4+1) \\
\hline 
h_{n_4-1} \exp(- h_{n_4-1} \ell_4) \ddx \ell_4 
& 
h_{n_4} \exp(- h_{n_4} \alpha) \ddx \alpha \ \cdot \ q(n_4+1,1)\\
&
\cdot h_{n_4 -1} \exp(- h_{n_4 -1} (\ell_4 - \alpha) ) \ddx \ell_4
\end{array}
$
\caption{Differing terms in density product (side-bud case)}
\label{table:1}
\end{table}

\begin{table}[ht!]
$
\begin{array}{l|r}
& \\
\hline 
h_{n_4-1} \exp(- h_{n_4-1} \ell_4) \ddx \ell_4 
& 
h_{n_4} \exp(- h_{n_4} \ell_4) \ddx \ell_4 \ \cdot \ q(n_4+1,1)\\
&
\cdot h_{1} \exp(- h_{1} \beta ) \ddx \beta 
\end{array}
$
\caption{Differing terms in density product (branch extension case)}
\label{table:2}
\end{table}

Because $h_{n-1}q(n,k) = \frac{n}{2k(n-k)}$ the ratios right/left of each of the first 3 lines in  Table \ref{table:1} equal
\begin{align} 
\frac{n_i+1}{n_i} \cdot \frac{n_{i+1}}{n_{i+1}+1} \cdot \exp(-\ell_i/n_i) , 
\qquad i = 1, 2, 3. 
\end{align}
The corresponding ratio for the final term equals
\begin{align}
\frac{n_4+1}{2n_4} \cdot \exp(-\alpha/n_4) \dd \alpha . 
\end{align}
Combining terms, the ratio of densities equals
\begin{align}
\frac{n+1}{2n} \cdot \exp(- \ell_1/n_1 - \ell_2/n_2 - \ell_3/n_3 - \alpha/n_4) \dd \alpha . 
\end{align}
In obtaining $\bt_n$ from $\bt_{n+1}$ we chose  one of $n+1$ buds  to delete, so finally the conditional density of CTCS(n+1) given $\CTCS(n)$ at $(\bt_{n+1} | \bt_n)$ 
equals
\begin{equation}
 \frac{1}{2n} \cdot \exp(- \ell_1/n_1 - \ell_2/n_2 - \ell_3/n_3 - \alpha/n_4) \dd \alpha . 
 \label{up1}
 \end{equation}
 We need to check that this agrees with the growth algorithm.
 According to the algorithm, the conditional density is a product of terms
 \begin{itemize}
 \item $n_4/n$: the chance that the target bud is in the relevant clade;
 \item $\exp(- \ell_1/n_1 - \ell_2/n_2 - \ell_3/n_3)$: the chance of not stopping before reaching the edge of length $\ell_4$;
 \item $\frac{1}{n_4} \exp(-\alpha/n_4) \dd \alpha$: the chance of stopping in $\ddx \alpha$;
 \item $1/2$: chance of placing side-bud on right side.
 \end{itemize}
 And this agrees with \eqref{up1}.
 
That was the side-bud addition case.
Now consider the  branch extension case,  illustrated in Figure \ref{Fig:5}.
In this case, $\bt_n$ has an edge terminating in two buds.
Then $\bt_{n+1}$ is obtained by extending the branch by an extra edge of some length $\beta$ to two terminal buds, leaving one bud as a side-bud.
Comparing the densities of $\bt_n$ and $\bt_{n+1}$ in this case,
the first 3 lines are the same as in Table \ref{table:1}, and the 4th is shown in Table \ref{table:2}.
Following the previous argument we derive the conditional density in a 
format similar to \eqref{up1}:
\begin{equation}
 \frac{1}{2n} \cdot \exp(- \ell_1/n_1 - \ell_2/n_2 - \ell_3/n_3 - \ell_4/n_4 - \beta ) \dd\beta . 
 \label{up2}
 \end{equation}
 Again this agrees with the growth algorithm.
 The third case, the side-bud extension, is similar.
 \qed

\begin{figure}
\setlength{\unitlength}{0.12in}
\begin{picture}(40,27)(0,0)

\put(0,0){\line(0,1){5}}
\put(0.1,1.5){($n = n_1$)}
\put(-1.5,3){$\ell_1$}
\put(5,5){\line(-1,0){10}}
\put(5,5){\line(0,1){5}}
\put(5.1,6.5){($n_2$)}
\put(3.5,8){$\ell_2$}
\put(10,10){\line(-1,0){10}}
\put(10,10){\line(0,1){5}}
\put(10.1,11.5){($n_3$)}
\put(8.5,13){$\ell_3$}
\put(15,15){\line(-1,0){10}}
\put(15,15){\line(0,1){5}}
\put(15.1,16.5){($n_4 = 2$)}
\put(13.5,18){$\ell_4$}
\put(15.6,20){\circle{0.8}}
\put(14.4,20){\circle{0.8}}

\put(20,0){\line(0,1){5}}
\put(20.1,1.5){($n_1 +1$)}
\put(18.5,3){$\ell_1$}
\put(25,5){\line(-1,0){10}}
\put(25,5){\line(0,1){5}}
\put(25.1,6.5){($n_2 +1$)}
\put(23.5,8){$\ell_2$}
\put(30,10){\line(-1,0){10}}
\put(30,10){\line(0,1){5}}
\put(30.1,11.5){($n_3 + 1$)}
\put(28.5,13){$\ell_3$}
\put(35,15){\line(-1,0){10}}
\put(35,15){\line(0,1){10}}
\put(35.1,16.5){($n_4 +1 = 3$)}
\put(35.1,21.5){($n_4 =  2$)}
\put(33.5,18){$\ell_4$}
\put(33.5,23){$\beta$}
\put(34.4,20){\circle{0.8}}
\put(35.6,25){\circle{0.8}}
\put(34.4,25){\circle{0.8}}

\put(18,20){$\bt_n$}
\put(27,20){$\bt_{n+1}$}
\put(22,20){$\Longrightarrow$}

\end{picture}
\caption{Growing via a  branch extension}
\label{Fig:5}
\end{figure}
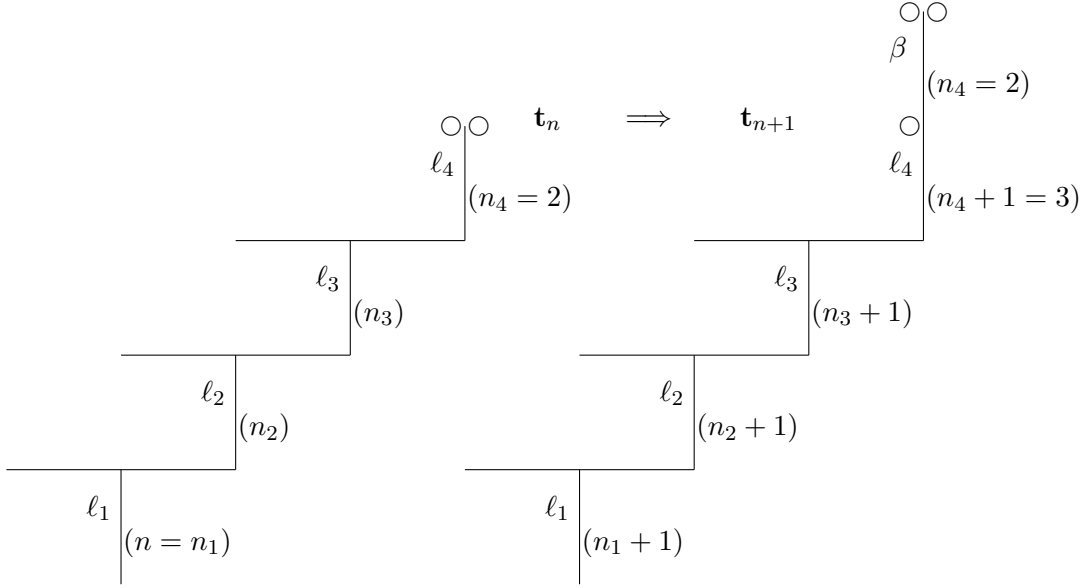

\section{Another proof of Proposition \ref{CP:split}}
 \label{sec:Exp1}
In $\CTCS(n)$ write 
$(X_n(i,t), i \ge 1)$ for the clade sizes at time $t$ and consider
\begin{align}
Q_n(t) = \sum_i X_n^2(i,t).
\end{align}
Note that, when a size-$m$ clade is split, the effect on sum-of-squares of clade sizes has expectation
\begin{equation}
\sum_{i=1}^{m-1}  (m^2 - i^2 - (m-i)^2) \ q(m,i) 
= \frac{m}{2h_{m-1}}  \sum_{i=1}^{m-1}  2 = \frac{m(m-1)}{h_{m-1}}  .
\label{id1}
\end{equation}
If we chose some arbitrary rates $r(m,n)$ for splitting a size-$m$ clade, then
\begin{align}
\Ex [Q_n(t) - Q_n(t+\ddx t) \vert \FF_t] = \sum_i r(X_n(i,t),n) \  \frac{X_n(i,t) (X_n(i,t)-1)}{h_{X_n(i,t)-1}}  \dd t .
\end{align}
So by choosing $r(m,n) = h_{m-1}$ we obtain
\begin{align} 
\Ex [Q_n(t) - Q_n(t+\ddx t) \vert \FF_t] = (Q_n(t) - n) \dd t. 
\end{align}
Because $Q_n(0) = n^2$ we obtain the exact formula
\begin{equation}
 \Ex [Q_n(t)] = n + (n^2  - n)e^{-t}, \ 0 \le t < \infty . 
 \label{EQn}
  \end{equation}
Now we are studying the height $B_n$ of the branchpoint between the paths to two uniform random distinct leaves of $\CTCS(n)$.
The conditional probability that both sampled leaves are in clade $i$ at time $t$ equals
$\sfrac{1}{n(n-1)} X_n(i,t) (X_n(i,t)-1) $.  So
\begin{align}
\Pr(B_n > t) &=
\sfrac{1}{n(n-1)} \Ex[\sum_i  X_n(i,t) (X_n(i,t)-1) ]\\\notag
&= \sfrac{1}{n(n-1)} \Ex[ Q_n(t) - n] \\\notag
&= e^{-t} \mbox{ by } \eqref{EQn} .
\end{align}

\section{Proof of  Lemma \ref{LM} by Mellin inversion}
\label{invert}

\begin{proof}[Proof of \refL{LM}]
 We begin by noting that the Mellin transform $1/\bigpar{\psi(s)-\psi(1)}$
in \eqref{m1}  extends to 
a meromorphic function in the entire
complex plane. The poles are the roots of
\begin{align}\label{m2}
  \psi(s)=\psi(1).
\end{align}
Obviously, $s_0:=1$ is a pole. Its residue is
\begin{align}\label{m3}
  \Res_{s=1}\frac{1}{\psi(s)-\psi(1)}
=\frac{1}{\psi'(1)}=\frac{6}{\pi^2},
\end{align}
using the well known formula $\psi'(1)=\pi^2/6$ 
\cite[5.4.12]{NIST}
(see also \eqref{psi'} below).
As shown in Lemma \ref{Lpsi} below
the other poles are real and negative, and thus can be ordered
$0>s_1>s_2>\dots$. In particular,
there are no other poles in the half-plane
$\Re s>s_1$, with $s_1\doteq -0.567$.

We cannot immediately use standard results on Mellin inversion\footnote{And we cannot use
\cite[Theorem 2(ii)]{FGD} since we do not know that $\gU$ has a density
that is locally of bounded variation.}
(as  in \cite[Theorem 2(i)]{FGD})
because the Mellin transform in \eqref{m1} decreases too slowly 
as $\Im s\to\pm\infty$ to be integrable on a vertical line $\Re s=c$.
In fact, Stirling's formula implies (see e.g.\
\cite[5.11.2]{NIST}) that 
\begin{align}\label{m4}
  \psi(s) = \log s + o(1) = \log|s| + O(1) = \log|\Im s|+O(1)
\end{align}
as $\Im s\to\infty$ with $s$ in, for example, any half-plane $\Re s\ge c$.

We overcome this problem by differentiating the Mellin transform, but we
first subtract the leading term corresponding to the pole at 1.
Since $\gU$ is an infinite measure, we first replace it by 
$\nu$ defined by 
$\dd\nu(x)=x\dd\gU(x)$; note that $\nu$ is also a measure on $\oio$, and 
taking $s=2$ in \eqref{m1} shows that $\nu$ is a finite measure.

Next, define $\nu_0$ as the measure $(6/\pi^2)\dd x$ on $\oio$, and let $\nux$
be the (finite) signed measure $\nu-\nu_0$. 
Then $\nux$ has the Mellin transform, by \eqref{m1},
\begin{align}\label{m5}
\tnux(s)
&:=
  \intoi x^{s-1}\dd\nux(x)
=  \intoi x^{s}\dd\gU(x)
- \frac{6}{\pi^2} \intoi x^{s-1}\dd x
\\\notag&\phantom:
=\frac{1}{\psi(s+1)-\psi(1)}-\frac{6}{\pi^2s},
\qquad \Re s>0.
\end{align}
We may here differentiate under the integral sign, which gives
\begin{align}\label{m6a}
\tnux'(s)
&:=
  \intoi(\log x) x^{s-1}\dd\nux(x)
\\ \label{m6b}&\phantom:
=
-\frac{\psi'(s+1)}{(\psi(s+1)-\psi(1))^2}
+\frac{6}{\pi^2s^2},
\qquad \Re s>0.
\end{align}
The Mellin transform $\tnux(s)$ extends to a meromorphic function in $\bbC$ with
(simple) poles $(s_i-1)_1^\infty$; note that there is no pole at $s_0-1=0$,
since the residues there of the two terms in \eqref{m5} cancel by \eqref{m3}.
Furthermore, the formula \eqref{m6b} for $\tnux'(s)$ holds
for all $s$ (although the integral in \eqref{m6a} diverges unless $\Re s>0$).

For any real $c$ we have, on the vertical line $\Re s=c$, as $\Im s\to\pm\infty$,
that $\psi(s)\sim \log|s|$ by \eqref{m4}, and also, by careful differentiation of
\eqref{m4} or by \cite[5.15.8]{NIST},
that $\psi'(s)\sim s\qw$.
It follows from \eqref{m6b} that $\tnux'(s)=O(|s|\qw\log\qww|s|)$ on the
line $\Re s=c$,  
for $|\Im s|\ge2$ say,
and thus
$\tnux'$ is integrable on this line
unless $c$ is one of the poles $s_i-1$.
In particular, taking $c=1$ and thus $s=1+u\ii$ ($u\in\bbR$),
we see that the function 
\begin{align}\label{mma}
\tnux'(1+\ii u)=\intoi x^{\ii u}\log (x) \dd\nux(x)  
\end{align}
is integrable. 
The change of variables $x=e^{-y}$ shows that the function
\eqref{mma} 
is the Fourier transform of the signed measure 
on $\bbR_+$ that corresponds to $\log(x)\dd\nux(x)$.
This measure 
on $\bbR_+$
is thus a finite signed measure with integrable Fourier transform,
which implies that it is absolutely continuous with a continuous density.
Reversing the change of variables, we thus see that the signed measure
$\log(x)\dd\nux(x)$ is absolutely continuous with a continuous density on
$(0,1)$. Moreover, denoting this density by $h(x)$, we obtain the standard
inversion formula for the Mellin transform
\cite[Theorem 2(i)]{FGD}, \cite[1.14.35]{NIST}:
\begin{align}\label{m8}
 h(x)=\frac{1}{2\pi\ii}\int_{c-\infty\ii}^{c+\infty\ii} x^{-s}\tnux'(s)\dd s,
\qquad x>0,
\end{align}
with $c=1$.
Furthermore, the integrand in \eqref{m8} is analytic in the half-plane $\Re
s>s_1-1$, and the estimates above of $\psi(s)$ and $\psi'(s)$ are
uniform for $\Re s$ in any compact interval and, say, $|\Im s|\ge2$. 
Consequently, we may shift the line of integration in \eqref{m8} to any
$c>s_1-1$. Taking absolute values in \eqref{m8}, and recalling that
$\tnux'(s)$ is integrable on the line, then yields
\begin{align}\label{m9}
  h(x) = O\bigpar{x^{-c}}
\end{align}
for any $c>s_1-1$.

Reversing the transformations above, we see that $\nux$ has the density
$(\log x)\qw h(x)$, and thus $\nu$ has the density
$(\log x)\qw h(x)+ 6/\pi^2$, and, finally, that $\gU$ has the density
\begin{align}\label{m10}
\gu(x):=\frac{\ddx\gU}{\ddx x}
=\frac{1}{x}\frac{\ddx\nu}{\ddx x}
= \frac{6}{\pi^2x}+\frac{1}{x\log x} h(x),
\qquad 0<x<1.
\end{align}
Furthermore, \eqref{m10} and \eqref{m9} have the form of the claimed
estimate \eqref{lmb},
although with the weaker error term $O(x^{-s_1-\eps}|\log x|\qw)$ 
for any $\eps>0$.

To obtain the (stronger) claimed error term, we note that the residue of $\nux(s)$
at $s_1-1$ is $r_1:=1/\psi'(s_1)$. Let $\nu_1$ be the measure
$r_1x^{1-s_1}\dd x$ on $\oio$; then $\nu_1$ has Mellin transform
\begin{align}\label{m11}
  \widetilde{\nu_1}(s)=r_1\intoi x^{s-1}x^{1-s_1}\dd x
=\frac{r_1}{s+1-s_1},
\qquad \Re s> s_1-1.
\end{align}
It follows from \eqref{m5} and \eqref{m11}
that the signed measure $\nu-\nu_0-\nu_1=\nux-\nu_1$ has the Mellin
transform 
\begin{align}\label{m12}
\frac{1}{\psi(s+1)-\psi(1)}-\frac{6}{\pi^2s}-\frac{r_1}{s+1-s_1},
\end{align}
which is an analytic function in the half plane $\Re s > s_2-1$.
Hence, the same argument as above yields the estimate
\begin{align}\label{lm1}
\gu(x)
= \frac{6}{\pi^2x}+
\frac{1}{\psi'(s_1)} x^{-s_1}
+O\bigpar{x^{-s_2-\eps}|\log x|\qw},
\qquad x\downarrow 0,
\end{align}
for any $\eps>0$, which in particular yields \eqref{lmb}.
\end{proof}

%

\subsection{A lemma on the digamma function}\label{App:digamma}

\begin{Lemma}\label{Lpsi}
  The roots of the equation $\psi(s)=\psi(1)$ 
are all real and can be enumerated in
  decreasing order as $s_0=1>s_1>s_2>\dots$, with $s_i\in(-i,-(i-1))$ for
  $i\ge1$. 
Numerically, $s_1\doteq-0.567$ and 
$s_2\doteq-1.628$.
\end{Lemma}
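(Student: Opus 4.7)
The plan is to write $f(s) := \psi(s) - \psi(1)$ in a factored form that makes the location of its zeros transparent. Starting from the standard partial-fraction expansion
\begin{align*}
\psi(s) = -\gamma + \sum_{k=0}^{\infty}\Bigpar{\frac{1}{k+1} - \frac{1}{s+k}}
\end{align*}
and the identity $\psi(1) = -\gamma$, combining the two terms of each summand gives
\begin{align*}
f(s) = (s-1) \sum_{k=0}^{\infty} \frac{1}{(k+1)(s+k)} =: (s-1)\, h(s),
\end{align*}
an identity between meromorphic functions on $\bbC$ with simple poles of $h$ at $s = 0, -1, -2, \dots$ (and the series now converges absolutely, the $k$-th term being $O(k^{-2})$).

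The first main step is to rule out non-real zeros of $f$. Since $s = 1$ is real, it suffices to show $h$ has no non-real zeros. Writing $s = x + \ii y$ and taking imaginary parts term-by-term,
\begin{align*}
\Im h(s) = -y \sum_{k=0}^{\infty} \frac{1}{(k+1)\,\abs{s+k}^2}.
\end{align*}
The series converges (terms are $O(k^{-3})$), and every term is strictly positive, so $\Im h(s) \neq 0$ whenever $y \neq 0$; hence every zero of $f$ is real.

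The second step is to locate and count the real zeros using the monotonicity of $\psi$. Term-by-term differentiation gives $\psi'(s) = \sum_{k=0}^{\infty}(s+k)^{-2} > 0$ for every real $s$ that is not a non-positive integer, so $\psi$ is strictly increasing on each interval of analyticity on the real line, namely $(0,\infty)$ and $(-n, -(n-1))$ for $n \geq 1$. On $(0,\infty)$, $\psi$ ranges over all of $\Reals$ and crosses $\psi(1)$ exactly once, at $s_0 = 1$ (equivalently, $h(s) > 0$ for $s > 0$). On each $(-n, -(n-1))$ with $n \geq 1$, the simple poles of $\psi$ at the endpoints force $\psi(s) \to -\infty$ as $s \downarrow -n$ and $\psi(s) \to +\infty$ as $s \uparrow -(n-1)$, so $\psi(s) = \psi(1)$ has exactly one solution $s_n$ in this interval, establishing the enumeration $s_0 > s_1 > s_2 > \dots$ with $s_i \in (-i, -(i-1))$.

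The numerical values $s_1 \doteq -0.567$ and $s_2 \doteq -1.628$ are obtained by standard root-finding applied to the restriction of $\psi(s) = -\gamma$ to $(-1,0)$ and $(-2,-1)$ respectively. I do not foresee a serious obstacle; the one genuinely substantive input is the imaginary-part computation, which exploits the partial-fraction representation of $\psi$ to dispose of complex zeros essentially for free.
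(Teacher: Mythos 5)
Your proposal is correct and follows essentially the same route as the paper: the partial-fraction expansion of $\psi$ is used to show via imaginary parts that all roots are real, and the positivity of $\psi'$ together with the pole behaviour at the non-positive integers localizes exactly one root in each interval $(-i,-(i-1))$. Your factorization $\psi(s)-\psi(1)=(s-1)h(s)$ is a cosmetic variant of the paper's direct observation that $\Im\psi(s)$ has the sign of $\Im s$; both rest on the same term-by-term computation.
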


\begin{proof}
Recall that $\psi(s)$ is a meromorphic function of $s$, with poles at
$0,-1,-2,\dots$. For any other complex $s$ we have the standard formulas
\cite[5.7.6 and 5.15.1]{NIST}
\begin{align}\label{psi}
  \psi(s)&=-\gamma+\sumko\Bigpar{\frac{1}{k+1}-\frac{1}{k+s}}
,\\\label{psi'}
\psi'(s)&=\sumko\frac{1}{(k+s)^2}
.\end{align}

If $\Im s>0$, then $\Im(1/(k+s))<0$ for all $k$ and thus \eqref{psi}
  implies
$\Im\psi(s)>0$.
Similarly, if $\Im s<0$, then $\Im\psi(s)<0$.
Consequently, all roots of $\psi(s)=\psi(1)$ are real.

For real $s$, \eqref{psi'} shows that $\psi'(s)>0$. 
We can write $\bbR\setminus\set{\text{the poles}}=
\bigcup_{i=0}^\infty I_i$ with 
$I_0:=(0,\infty)$ and $I_i:=(-i,-(i-1))$ for $i\ge1$;
it then follows that $\psi(s)$ is strictly increasing in each interval
$I_i$.
Moreover, by \eqref{psi} (or general principles),
at the poles we have the limits $\psi(-i-0)=+\infty$ and
$\psi(-i+0)=-\infty$ ($i\ge0$). Consequently, $\psi(s)=\psi(1)$
has exactly one root $s_i$ in each $I_i$, and obviously the positive root is
$s_0=1$. (See also the graph of $\psi(s)$ in \cite[Figure 5.3.3]{NIST}.)
%
\end{proof}




\end{document}